\tikzset{
    >=stealth,
    every picture/.style={thick},
    graphs/every graph/.style={empty nodes},
}
\tikzstyle{vertex}=[
\tikzstyle{printersafe}=[decoration={snake,amplitude=0pt}]
\newcommand{\supp}{\operatorname{supp}}
\newcommand{\pp}{\mathbb{P}}
\newcommand{\qq}{\mathbb{Q}}
\newcommand{\cc}{\mathbb{C}}
\def\O#1.{\mathcal {O}_{#1}}			
\def\pr #1.{\mathbb P^{#1}}				
\def\af #1.{\mathbb A^{#1}}			
\def\ses#1.#2.#3.{0\to #1\to #2\to #3 \to 0}	
\def\xrar#1.{\xrightarrow{#1}}			
\def\K#1.{K_{#1}}						
\def\bA#1.{\mathbf{A}_{#1}}			
\def\bM#1.{\mathbf{M}_{#1}}				
\def\bL#1.{\mathbf{L}_{#1}}				
\def\bB#1.{\mathbf{B}_{#1}}				
\def\bK#1.{\mathbf{K}_{#1}}			
\def\subs#1.{_{#1}}					
\def\sups#1.{^{#1}}
  \newtheorem{theorem}{Theorem}[section]
  \newtheorem{lemma}[theorem]{Lemma}
  \newtheorem{proposition}[theorem]{Proposition}
  \newtheorem{corollary}[theorem]{Corollary}
  \newtheorem{notation}[theorem]{Notation}
  \newtheorem{definition}[theorem]{Definition}
  \newtheorem{example}[theorem]{Example}
  \newtheorem{question}[theorem]{Question}
\newtheorem{remark}[theorem]{Remark}
\theoremstyle{remark}
\numberwithin{equation}{section}
\begin{document}

\title[Algebraic tori in the complement of quartic surfaces]{
Algebraic tori in the complement of quartic surfaces}

\author[E. Alves da Silva]{Eduardo Alves da Silva}
\address{Institut de Mathématiques d'Orsay, Université Paris-Saclay, Orsay, France.
}
\email{eduardo.alves-da-silva@universite-paris-saclay.fr}

\author[F.~Figueroa]{Fernando Figueroa}
\address{Department of Mathematics, Northwestern University, Evanston, Il 60208, USA
}
\email{fernando.figueroa@northwestern.edu}

\author[J.~Moraga]{Joaqu\'in Moraga}
\address{UCLA Mathematics Department, Box 951555, Los Angeles, CA 90095-1555, USA
}
\email{jmoraga@math.ucla.edu}

\subjclass[2020]{Primary 14M25, 14E25;
Secondary  14B05, 14E30, 14E05.}
\keywords{toric geometry, cluster type pairs, log Calabi--Yau pairs, quartic surfaces.}

\begin{abstract}
Let $B\subset \pp^3$ be an slc quartic surface.
The existence of an embedding $\mathbb{G}_m^3\hookrightarrow \pp^3\setminus B$
implies that $B$ has coregularity zero.
In this article, we initiate the classification of coregularity zero slc quartic surfaces $B\subset \pp^3$ for which $\pp^3\setminus B$ contains an algebraic torus $\mathbb{G}_m^3$.
Equivalently, the classification of cluster type pairs
$(\pp^3,B)$.
Along the way, we give criteria
for a log Calabi--Yau pair $(X,B)$ over a toric variety $T$
to be of cluster type. 
\end{abstract} 

\maketitle

\setcounter{tocdepth}{1} 
\tableofcontents

\section{Introduction}

Calabi--Yau varieties are one of the three building blocks of algebraic varieties.
Elliptic curves and 
quartic surfaces are among the simplest Calabi--Yau varieties.
Smooth quartic surfaces are the well-known K3 surfaces
that are extensively studied in the literature.
Quartic surfaces with isolated nodal points
and nodal curves are classic topics as well.
The study of singular quartic surfaces
dates back to Kummer~\cite{Kum65}.
These surfaces were considered by many mathematicians in the 19th century including Clebsch~\cite{Cle69},
Klein~\cite{Kle74,Kle85}, 
Noether~\cite{Noe70},
Rohn~\cite{Roh84}, and
Segre~\cite{Seg84}.
In~\cite{Jes16}, Jessop gives a complete account of the leading properties of quartic surfaces with isolated singularities or nodal curves. 
In the 20th century, singular quartic hypersurfaces
have been studied by Kato and Naruki~\cite{KN82}, 
Umezu~\cite{Ume84}, Takahashi, Watanabe, and Higuchi~\cite{TWH82}, and Urabe~\cite{Ura84}.
A modern classification of non-normal quartic surfaces was achieved by Urabe in~\cite{Ura86}.
In~\cite{Mel20}, Mella proved that a rational quartic surface $S$ is Cremona equivalent to a plane, i.e., 
it can be transformed into a plane by performing a birational automorphism of the ambient projective space $\pp^3$.
The singularities of $S$ in Mella's result are necessary.
Indeed, in~\cite{Ogu17}, Oguiso has produced examples of isomorphic smooth quartic surfaces
that are not Cremona equivalent.
In this article, we study log Calabi--Yau pairs $(\pp^3,B)$
where $B$ is a quartic hypersurface, from the perspective of birational geometry.
This approach was initiated by Ducat in~\cite{Duc24}.
In that article, Ducat proved that a log Calabi--Yau pair $(\pp^3,B)$ of coregularity zero and index one is {\em log rational}, i.e., there exists a crepant birational map
\begin{equation}\label{eq:cbir} 
\phi\colon (\pp^3,H_0+H_1+H_2+H_3) \dashrightarrow (\pp^3,B).
\end{equation} 
Roughly speaking, a crepant birational map is a birational map that respects the boundary divisor (see Definition~\ref{def:cbir}).
In the previous statement, the index one condition simply implies that $B$ is a reduced quartic surface, while
the coregularity condition measures singularities of pairs (see Definition~\ref{def:coreg}).
The work of Ducat heavily relied on constructions coming from Cremona transformations~\cite{Kat87,DH16}.
In a similar vein, Loginov, Vasilkov, and the third author proved that a general smooth rational Fano $3$-fold $X$ admits a boundary $B$ for which $(X,B)$ is log rational (see, e.g.,~\cite{LMV24}).

A three-dimensional {\em cluster type} pair is a log rational pair $(X,B)$ for which the birational map in~\eqref{eq:cbir} does not contract divisors intersecting $\mathbb{G}_m^3$ (see Definition~\ref{def:ct} for the general definition).
Cluster type pairs and cluster type varieties\footnote{A cluster type variety $X$ is a variety that admits a cluster type log Calabi--Yau pair $(X,B)$.} are closely related to cluster varieties. However, cluster type pairs give a more general class of objects.
For instance, every del Pezzo surface $X$ of degree at least two is a cluster type variety (see~\cite[Theorem 2.1]{ALP23} and~\cite[Lemma 1.13]{GHK15}). 
On the other hand, there are examples of del Pezzo surfaces of degree one which are not cluster type.
However, there is a dense open set of del Pezzo surfaces of degree one that are cluster type.
The property of being cluster type is constructible in families of smooth Fano varieties (see~\cite{JM24}).
Due to~\cite{Duc24}, the three-dimensional projective space $\pp^3$ admits a plethora of log Calabi--Yau pairs $(\pp^3,B)$ that are log rational.
In this article, we initiate the classification of {\em cluster type} log Calabi--Yau pairs of the form $(\pp^3,B)$.
Our main theorem is the following.

\begin{theorem}\label{introthm:cluster-type-(P^3,B)}
Let $(\pp^3,B)$ be a log Calabi--Yau pair
of index one and coregularity zero.
Assume that $B$ is non-normal.
If the nodal locus of $B$ is not contained in a plane,
then $(\pp^3,B)$ is cluster type.
\end{theorem}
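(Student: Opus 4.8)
The plan is to produce an open embedding $\mathbb{G}_m^3\hookrightarrow \pp^3\setminus B$; by the definition of cluster type this is equivalent to the statement, since a crepant birational map realizing such an embedding contracts no divisor meeting the torus. Throughout I would use that $(\pp^3,B)$ is already log rational by~\cite{Duc24}, so the only real issue is to \emph{upgrade} log rationality to cluster type, together with the criteria for cluster type over a toric variety established in this paper.

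First I would analyze the non-normal locus. Since $B$ is a non-normal index-one quartic, its non-normal locus is a double curve $C\subset B$ along whose general point $B$ has a transverse double point. Crepant adjunction transfers the coregularity-zero hypothesis to the log Calabi--Yau pair $(B^{\nu},\mathrm{Diff})$ on the normalization, and the different forces $C$ to carry additional log canonical centers, namely the pinch and triple points of $B$. Combining this with the classification of non-normal quartic surfaces, I would enumerate the possibilities for $C$. The hypothesis that the nodal locus is not contained in a plane discards every configuration whose double curve is degenerate, leaving precisely those $C$ that span $\pp^3$, such as a twisted cubic or a union of lines spanning $\pp^3$.

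Next, for each surviving configuration I would build a crepant birational map to a toric pair. Non-planarity of $C$ lets me choose homogeneous coordinates on $\pp^3$ in which $C$, its zero-dimensional strata, and the irreducible components of $B$ all become torus-invariant. The linear system of quadrics through $C$ then furnishes a dominant map $f\colon \pp^3\dashrightarrow T$ onto a toric variety $T$ whose toric boundary $\partial T$ pulls back inside $B$: for a twisted cubic the net of quadrics gives a map to $\pp^2$, while for a pair of skew lines the four products of coordinates give the Segre-type projection to $\pp^1\times\pp^1$. I would then feed this into the cluster-type-over-a-toric-variety criterion: it remains to verify that $f^{*}\partial T$ accounts for $B$ up to crepant equivalence and that the general fibre with its induced boundary is itself cluster type, which for a fibration over a curve or a surface reduces to a one- or two-dimensional toric check. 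Pulling back the open torus of $T$ then yields the desired $\mathbb{G}_m^3\subset\pp^3\setminus B$.

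The hard part will be this last step rather than the mere existence of a birational map: log rationality only guarantees \emph{some} crepant birational map from the standard toric boundary, whereas cluster type demands that no exceptional divisor meeting the torus be contracted. Ensuring this for every non-planar configuration --- in particular controlling the extra log canonical centers coming from the pinch and triple points so that the dlt model has the correct two-dimensional dual complex, and checking that the quadric map introduces no torus-meeting exceptional divisor --- is where the genuine work lies, and is exactly the point at which the non-planarity hypothesis is used.
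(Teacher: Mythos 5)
Your proposal correctly identifies the skeleton that the paper also uses --- reduce to a crepant fibration over a toric base and invoke a relative cluster-type criterion, with the net of quadrics through the twisted cubic giving the map to $\pp^2$ (this is exactly the secant-line $\pp^1$-bundle of Case~2 of Theorem~\ref{thm:irreducible}) --- but as written it has genuine gaps. First, the claim that one can choose coordinates in which ``$C$, its zero-dimensional strata, and the irreducible components of $B$ all become torus-invariant'' is false: the only torus-invariant prime divisors of $\pp^3$ are the coordinate hyperplanes, so an irreducible quartic is never torus-invariant, and a twisted cubic is not a torus-invariant curve of $\pp^3$ under any torus action (the invariant curves are the six coordinate lines). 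Second, ``pulling back the open torus of $T$'' does not produce $\mathbb{G}_m^3\subset\pp^3\setminus B$: the preimage is merely a fibration over a torus, and converting it into a torus is precisely the content of Theorem~\ref{thm:relative-cluster-type} and Theorem~\ref{introthm:relative-criteria-irreducible}, which require controlling an explicit MMP (the $(-E)$-MMP over the base), the complexity-one Lemma~\ref{lem:complexity-one}, and --- in the twisted cubic case --- a delicate analysis of the index-two pair $(\pp^2,Q+\tfrac12 Q')$ produced by the canonical bundle formula, with subcases according to the degeneration of $Q'$. You explicitly defer exactly this step, so the proof is a plan rather than an argument; note also that the equivalence between cluster type and the existence of a torus embedding is Theorem~\ref{thm:Fano-ct}, not the definition.

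Separately, your enumeration of configurations is incomplete. The theorem also covers reducible $B$ (which is automatically non-normal), where the nodal locus is the union of the pairwise intersection curves of the components and need not be a twisted cubic or a union of lines: for two quadrics it is a space quartic curve, and for three or more components one instead exploits a point of multiplicity three. The paper disposes of these via Proposition~\ref{prop:mult3}, Corollary~\ref{cor:comp3}, and Proposition~\ref{prop:two-quadrics} (blow-up of a triple point, respectively the pencil spanned by the two quadrics), none of which appears in your outline. For the irreducible case with nodal locus three concurrent lines, the paper does not use quadrics through the curve at all but blows up one line and fibers by the pencil of planes through it; your Segre-type map for ``skew lines'' does not correspond to any case in the classification of non-normal irreducible quartics.
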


In the case that $B$ is not irreducible, we can completely determine if the pair $(\pp^3,B)$ is of cluster type.

\begin{theorem}\label{thm:red}
Let $(\pp^3,B)$ be a log Calabi--Yau pair of index one, coregularity zero, and $B$ reducible.
Then, the pair $(\pp^3,B)$ is of cluster type if and only if the $B$ is not the sum of a hyperplane and a cubic surface intersecting smoothly along a nodal plane cubic.
\end{theorem}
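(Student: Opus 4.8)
The plan is to run through the possible reducible configurations according to the degrees of the irreducible components of $B$. Since $(\pp^3,B)$ has index one, $B$ is a reduced quartic, and being reducible its components are distinct irreducible surfaces whose degrees form one of the partitions $(1,1,1,1)$, $(2,1,1)$, $(2,2)$, or $(3,1)$. In each type the coregularity zero hypothesis forces the boundary to carry a zero-dimensional log canonical center, so the divisorial components must meet so as to produce a $0$-dimensional stratum. First I would record, in each degree type, which configurations are log canonical of coregularity zero; this is a finite analysis governed by the singularity type of the components and of their pairwise intersections. For instance, in type $(3,1)$ the intersection $C=S\cap H$ must be singular, since a transverse (hence smooth) hyperplane section would give an elliptic minimal log canonical center and coregularity one.

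For the positive direction I would dispatch every case except the distinguished $(3,1)$ one by exhibiting a torus in the complement via the cluster type criterion for log Calabi--Yau pairs over a toric base established earlier in the paper. The case $(1,1,1,1)$ is immediate: log canonicity already forces the four hyperplanes into general position (three through a line or four through a point violate the multiplicity bound), so $(\pp^3,B)$ is the standard toric pair with $\G_m^3=\pp^3\setminus B$. For the types $(2,1,1)$, $(2,2)$, and the $(3,1)$ configurations in which the cubic is singular, or meets $H$ along a reducible or otherwise non-nodal curve, I would produce an explicit crepant birational model over a toric base $T$ using the available rational fibration structures, namely projection from a singular point, the ruling of a quadric, or the pencil generated by two components, and then verify that the induced boundary is toric and that no divisor meeting the interior torus is contracted. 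The criterion then yields that all of these pairs are of cluster type.

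The heart of the matter is the negative direction: showing that $B=H+S$, with $S$ a smooth cubic surface meeting $H$ along an irreducible nodal plane cubic $C=S\cap H$, is never of cluster type. The strategy is adjunction. Assuming $(\pp^3,B)$ is cluster type, the crepant birational map from the toric model carries the toric divisor lying over $S$ to $S$, and restricting it should give a torus-preserving crepant birational map onto the adjoint surface pair $(S,\operatorname{Diff}_S(H))=(S,C)$; thus $(S,C)$ would itself be of cluster type, i.e. $S\setminus C=S\cap(\pp^3\setminus H)$ would contain $\G_m^2$ as a dense open subset. This step relies on cluster type being inherited by divisorial log canonical centers, exactly as toricity passes to torus-invariant subvarieties. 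I would then derive a contradiction on the surface level: $(S,C)$ is a coregularity zero log Calabi--Yau surface whose boundary is an irreducible nodal anticanonical curve on a degree-three del Pezzo surface, and such a pair is not of cluster type. Note that $S$ is a cluster type \emph{variety} by the cited del Pezzo results, so the obstruction must live in the specific pair $(S,C)$, not in $S$ alone.

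The main obstacle is precisely this last step, and it is genuinely delicate because the cheap invariants do not detect it. By Zariski's theorem the generic plane section of $B$ is a line plus a smooth cubic meeting in three nodes, so $\pi_1(\pp^3\setminus B)$ is abelian, and the unit groups of the relevant complements are likewise compatible with a torus. The obstruction is therefore the finer one coming from the theory of Looijenga pairs: blowing up the node of $C$ and then its corners presents $S\setminus C$ as a Looijenga interior, and containing an honest torus would force its period, or monodromy, to be trivial, which fails precisely because $S$ is a smooth, hence non-toric, cubic surface. I expect the cleanest route is to invoke the two-dimensional classification of cluster type log Calabi--Yau surfaces to conclude, and that verifying non-triviality of this period for every smooth cubic with a nodal hyperplane section, and dually confirming its triviality in the singular-cubic cases of the positive direction, is where the real work lies.
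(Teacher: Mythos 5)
Your positive direction is essentially the paper's: at least three components (and more generally a point of multiplicity three) are handled by blowing up and fibering over $\pp^2$, two quadrics by the pencil they generate, and the singular-cubic subcases of type $(3,1)$ by reduction to a multiplicity-three point; this all matches Proposition~\ref{prop:mult3}, Corollary~\ref{cor:comp3}, Proposition~\ref{prop:two-quadrics}, and the relative criteria of Sections~\ref{sec:rel-ct}--\ref{sec:criteria-toric-blow-up}. The negative direction, however, rests on a claim that is false. You propose to restrict a hypothetical cluster structure to the cubic component $S$ and derive a contradiction from the assertion that the adjoint pair $(S,C)$, with $S$ a smooth cubic surface and $C=S\cap H$ a nodal anticanonical curve, is not of cluster type. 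But $(S,C)$ \emph{is} of cluster type. Every two-dimensional Looijenga pair admits a toric model after corner blow-ups (this is exactly \cite[Lemma 1.13]{GHK15}, cited in the introduction); concretely, one can contract six disjoint lines of $S$ avoiding the node of $C$ to reach $(\pp^2,\text{nodal cubic})$, which has a classical toric model, and the resulting crepant map $(\pp^2,\Sigma^2)\dashrightarrow(S,C)$ only blows up points lying on the boundary, so its exceptional locus misses $\G_m^2$. Equivalently, $S\setminus C$ visibly contains a dense two-torus, so Theorem~\ref{thm:Fano-ct}(ii) applies. Your appeal to the period point does not help: a nontrivial period obstructs $(S,C)$ being \emph{toric}, not being of cluster type, and the latter is all your adjunction step could possibly transfer. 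So even granting the (nontrivial, but plausible) inheritance of cluster type by divisorial lc centers, you obtain no contradiction.

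The genuine obstruction is not visible on a single fiber or on the surface $S$ alone; it is a \emph{relative} phenomenon. The paper's proof (Proposition~\ref{prop:two-components-cubic-hyperplane}) blows up the base locus of the pencil generated by $3H$ and $C$ to produce a fibration over $\pp^1$ of relative Picard rank three whose general log fiber is a cycle of three curves, each of \emph{negative} self-intersection, and then shows (Lemmas~\ref{lem:combinatorial-cycles} and~\ref{lem:three-negative-curves-not-cluster-type}) that such a cycle can never be matched, after corner blow-ups in all but two components, with a corner blow-up of a Hirzebruch boundary --- the point being that the relative Picard rank bounds by two the number of non-toric modifications available over $\pp^1$. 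The same tension is already visible in Theorem~\ref{introthm:relative-criteria-irreducible}: a fiberwise toric structure compatible with the fibration would require a toric weighted blow-up at the node of $C$ leaving $\widetilde{C}^2>0$, whereas $C^2=3$ gives $\widetilde{C}^2=3-(a+b)^2/(ab)\le -1$ for all weights $(a,b)$. Any correct proof of the "only if" direction must capture this failure of the fiberwise toric models to glue over $\pp^1$; restricting to one divisorial center cannot detect it.
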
 

Thus, we can decide whether a log Calabi--Yau pair $(\pp^3,B)$ is of cluster type unless $B$ is an irreducible surface whose non-divisorial
log canonical centers are contained in a plane. 
Further, if $B$ has a point of multiplicity at least three, then in Proposition~\ref{prop:mult3} we show that the pair $(\pp^3,B)$ is of cluster type. 
In Example~\ref{ex:nodal-plane-conic} and Example~\ref{ex:nodal-line}, we provide quartic surfaces with nodal points along plane curves for which every point has multiplicity at most two.
These examples show that Theorem~\ref{introthm:cluster-type-(P^3,B)} does not settle the classification
of cluster type pairs of the form $(\pp^3,B)$.
However, the classification is complete for $B$ reducible due to Theorem~\ref{thm:red}. In Example \ref{ex:irred-normal}, we provide an irreducible normal quartic surface $B$ such that $(\pp^3,B)$ has coregularity zero and it is of cluster type.

Cluster type pairs were introduced by Enwright and the two last authors in~\cite{EFM24}. 
In~\cite[Theorem 1.3]{EFM24}, they prove that whenever $X$ is a $\qq$-factorial Fano variety a log Calabi--Yau pair $(X,B)$ is of cluster type if and only if $X\setminus B$ is divisorially covered by algebraic tori.
In particular, Theorem~\ref{introthm:cluster-type-(P^3,B)} gives a partial classification of quartic surface $B\subset \pp^3$ for which there is an embedding $\mathbb{G}_m^3\subseteq \pp^3\setminus B$.

One of the main tools in order to prove Theorem~\ref{introthm:cluster-type-(P^3,B)} is a criteria to decide whether a log Calabi--Yau pair $(X,B)$ admitting a fibration to a toric variety is of cluster type. To do so, in Definition~\ref{def:rel-ct}, we introduce the notion of relative cluster type pairs. 
The following result states that under some mild assumptions
a $3$-dimensional log Calabi--Yau pair is of cluster type
if and only if it is of cluster type over $\pp^1$.

\begin{theorem}\label{introthm:relative-cluster-type}
Let $(X,B)$ be a log Calabi--Yau $3$-fold of index one and coregularity zero.
Let $\pi\colon (X,B)\rightarrow (\pp^1,\{0\}+\{\infty\})$
be a crepant fibration. 
Assume that $\pi^{-1}(0)\cup \pi^{-1}(\infty)\subseteq \supp(B)$.
Then, the pair $(X,B)$ is of cluster type if and only if
it is of cluster type over $\pp^1$.
\end{theorem}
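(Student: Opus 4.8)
The plan is to prove the two implications separately, with essentially all of the content concentrated in the forward direction. The backward implication is close to formal: a cluster type structure over $\pp^1$ provides, over the base torus $\G_m\subseteq\pp^1$, a rank-two relative torus together with a crepant toric model over $\pp^1$; combining this relative torus with the coordinate torus of $\pp^1$ produces an open embedding $\G_m^3\hookrightarrow X\setminus B$ arising from a crepant toric model that contracts no divisor meeting $\G_m^3$, which by Definition~\ref{def:ct} is exactly an (absolute) cluster type structure on $(X,B)$.

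For the forward implication, suppose $(X,B)$ is of cluster type. By Definition~\ref{def:ct} there is an open torus $T_X\cong\G_m^3\subseteq X\setminus B$ and a crepant birational map $\phi\colon(\pp^3,\textstyle\sum_{i=0}^3 H_i)\dashrightarrow(X,B)$ identifying the big torus $\G_m^3\subseteq\pp^3$ with $T_X$ and contracting no divisor meeting $T_X$. First I would observe that $\pi$ sends $T_X$ into $\G_m=\pp^1\setminus\{0,\infty\}$: if some $x\in T_X$ had $\pi(x)\in\{0,\infty\}$, then $x\in\pi^{-1}(0)\cup\pi^{-1}(\infty)\subseteq\supp(B)$, contradicting $T_X\subseteq X\setminus B$. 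Here the hypothesis $\pi^{-1}(0)\cup\pi^{-1}(\infty)\subseteq\supp(B)$ is used in an essential way. The key step is then that $\pi|_{T_X}\colon\G_m^3\to\G_m$ is automatically toric: pulling back the coordinate of $\G_m$ yields a unit of $\mathcal{O}(\G_m^3)=\kk[x^{\pm1},y^{\pm1},z^{\pm1}]$, and every such unit has the form $c\,\chi^m$ with $c\in\kk^\times$ and $m\in M=\zz^3$, so $\pi|_{T_X}=c\,\chi^m$. The constant $c$ is a translation by a toric automorphism of $(\pp^1,\{0\}+\{\infty\})$, which may be absorbed without altering the relative toric structure, so we may assume $\pi|_{T_X}=\chi^m$ is a homomorphism of tori. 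Since $\pi$ is a fibration, its fibres are connected, forcing the generic fibre of $\chi^m$ to be connected and hence $m$ to be primitive; thus $\ker\chi^m\cong\G_m^2$ is the relative torus and the fibres of $\pi|_{T_X}$ are its cosets, exhibiting $\pi$ as a toric fibration on the open torus.

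It then remains to upgrade this torus-level compatibility to a genuine relative cluster type structure. The primitive character $m$ corresponds to a surjection of lattices $N\to N'\cong\zz$; refining the fan of $\pp^3$ so that it becomes compatible with the fan of $\pp^1$ under this surjection produces a $\qq$-factorial toric variety $\bar Z$ with a toric morphism $\bar Z\to\pp^1$ and reduced toric boundary $B_{\bar Z}$, together with a crepant toric morphism $(\bar Z,B_{\bar Z})\to(\pp^3,\sum H_i)$ that is an isomorphism over the big tori. Composing with $\phi$ yields a crepant birational map $(\bar Z,B_{\bar Z})\dashrightarrow(X,B)$ over $\pp^1$. The refinement introduces only toric boundary divisors, which lie in $B_{\bar Z}$ and do not meet the relative torus, and $\phi$ contracts no divisor meeting $T_X$; hence the composite contracts no divisor meeting the relative torus. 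Moreover, the hypothesis $\pi^{-1}(0)\cup\pi^{-1}(\infty)\subseteq\supp(B)$ matches the fibres over $\{0,\infty\}$ with the vertical toric boundary, so the construction is compatible with the boundary over $\pp^1$. By Definition~\ref{def:rel-ct} this exhibits $(X,B)$ as being of cluster type over $\pp^1$.

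I expect the main obstacle to be this last step: verifying that the relative model built from $m$ satisfies all the requirements of Definition~\ref{def:rel-ct}, in particular that crepancy is preserved along the composite (using the index-one and coregularity-zero hypotheses) and that the non-contraction condition holds relative to $\pi$ rather than merely absolutely, while correctly accounting for the vertical divisors over $0$ and $\infty$. The monomial identity $\pi|_{T_X}=\chi^m$ is the conceptual heart of the argument, but translating it into a relative crepant model genuinely compatible with the given fibration $\pi$ is where the careful bookkeeping lies.
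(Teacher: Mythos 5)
Your forward implication is correct in outline but follows a genuinely different route from the paper's. The paper proves the general statement (Theorem~\ref{thm:relative-cluster-type}, over an arbitrary projective toric base) by taking an \emph{arbitrary} absolute toric model in the form of a dlt modification $(Y,B_Y)\to(X,B)$ together with a $(K_Y+B_Y+\epsilon E_Y)$-MMP onto a toric pair, and then showing that every extremal curve contracted by this MMP is automatically vertical over the base; that verticality is the hard point, and it is established by a connectedness argument: a horizontal contracted curve would meet the non-klt locus of a minimal log canonical center in at least two points (via Lemma~\ref{lem:curves-on-toric}, which says a non-boundary curve on a projective toric pair meets the boundary in at least two points), contradicting the Filipazzi--Svaldi connectedness principle. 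You instead prove the compatibility of the torus with the fibration directly: the hypothesis $\pi^{-1}(0)\cup\pi^{-1}(\infty)\subseteq\supp(B)$ forces the open torus into $\G_m\subseteq\pp^1$, so $\pi\circ\phi|_{\G_m^3}$ is a unit of the Laurent ring and hence a monomial $c\chi^m$, after which a fan refinement of $\pp^3$ compatible with $m\colon N\to\zz$ produces the relative toric model. This is more elementary (no MMP, no connectedness principle) and in fact generalizes verbatim to the toric base of Theorem~\ref{thm:relative-cluster-type}, since each coordinate of the base torus pulls back to a unit. The loose ends you flag are all routine and should be written out: $\pi\circ\phi$ and its reciprocal extend across the codimension-two set ${\rm Ex}(\phi)\cap\G_m^3$ by normality, which is why the unit argument applies; a fiber of $\chi^m$ has $d$ connected components where $d$ is the divisibility of $m$, and these sit as a dense open subset of the irreducible general fiber of $\pi$, forcing $d=1$; crepancy of the composite is automatic for a composition of crepant maps and needs no index-one or coregularity input, contrary to your worry; every divisor of $\bar Z$ contracted by $\bar Z\dashrightarrow X$ is either a component of the toric boundary $B_{\bar Z}$, hence a log canonical place of $(X,B)$, or a non-toric divisor meeting $\G_m^3$, which the cluster-type condition forbids $\phi$ from contracting, so Definition~\ref{def:rel-ct} is satisfied; and the two structure maps to $\pp^1$ agree on the dense torus, hence everywhere, so the contraction is over $\pp^1$.
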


In Theorem~\ref{thm:relative-cluster-type}, we give a higher dimensional version of the previous theorem, i.e., a version in which we replace $\pp^1$ with a possibly higher dimensional projective toric variety.
However, the previous version of the theorem is often the most useful one. Indeed, using pencils, it is frequently easy to construct fibrations to $\pp^1$ from higher birational models of a given variety.
Thus, Theorem~\ref{introthm:relative-cluster-type} is especially practical to understand whether a log Calabi--Yau pair $(\pp^3,B)$, with $B$ reducible, is of cluster type. In the case that $\pi\colon X\rightarrow \pp^1$ is a Mori fiber space, we get stronger criteria for the log Calabi--Yau pair $(X,B)$ to be of cluster type.

\begin{theorem}\label{introthm:relative-criteria-irreducible}
Let $(X,B)$ be a $\qq$-factorial log Calabi--Yau $3$-fold of index one and coregularity zero.
Let $\pi\colon (X,B)\rightarrow (\pp^1,\{0\}+\{\infty\})$ be a crepant fibration of relative Picard rank one.
Assume that $\pi^{-1}(0)\cup \pi^{-1}(\infty)\subseteq \supp(B)$.
Let $(F,B_F)$ be a general log fiber of $\pi$
and assume that $B_F$ is an irreducible nodal curve. 
Then, the following two conditions are equivalent:
\begin{enumerate}
\item the pair $(X,B)$ is of cluster type, and 
\item there exists a toric weighted blow-up $\widetilde{F}\rightarrow F$ extracting a unique log canonical place of $(F,B_F)$ over a nodal point of $B_F$ such that the strict transform $\widetilde{B}_F$ of $B_F$ in $\widetilde{F}$ has positive self-intersection.
\end{enumerate} 
\end{theorem}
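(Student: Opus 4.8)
The plan is to reduce the statement to a self-contained criterion on the general log fiber and then prove that criterion by surface-theoretic arguments. First I would apply Theorem~\ref{introthm:relative-cluster-type}: the hypotheses here (a log Calabi--Yau $3$-fold of index one and coregularity zero, a crepant fibration $\pi\colon(X,B)\to(\pp^1,\{0\}+\{\infty\})$ with $\pi^{-1}(0)\cup\pi^{-1}(\infty)\subseteq\supp(B)$) are exactly those of that theorem. Hence condition (1) is equivalent to $(X,B)$ being of cluster type \emph{over} $\pp^1$, and from now on I would work relatively over the base.

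Next I would use the relative Picard rank one assumption to convert relative cluster type into a fiberwise condition on the generic fiber. Since $\rho(X/\pp^1)=1$, all fibers of $\pi$ are irreducible, so $B=\pi^{-1}(0)+\pi^{-1}(\infty)+B^{h}$ with $B^{h}$ horizontal restricting to $B_F$ on a general fiber, and coregularity zero descends to $(F,B_F)$ (consistently, $B_F$ being an irreducible nodal curve has dual complex $S^1$). Being of cluster type over $\pp^1$ means a torus $\mathbb{G}_m^3\hookrightarrow X\setminus B$ compatible with $\pi$; restricting to the general fiber produces $\mathbb{G}_m^2\hookrightarrow F\setminus B_F$, while conversely a torus on the generic fiber over $\kk(\pp^1)$ spreads out to a relative torus over $\mathbb{G}_m=\pp^1\setminus\{0,\infty\}$ precisely because the Mori fiber space structure leaves no horizontal divisor available to be contracted. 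Thus relative cluster type is equivalent to the generic fiber $(F,B_F)$ being of cluster type. Note that $B_F\sim -K_F$ and adjunction give $p_a(B_F)=1$, so $B_F$ is rational with a single node, which is why ``a nodal point'' in (2) is unambiguous.

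It then remains to prove the surface criterion: a log Calabi--Yau surface $(F,B_F)$ of coregularity zero with $B_F$ an irreducible nodal curve is of cluster type if and only if some toric weighted blow-up of the node has strict transform of positive self-intersection. For the implication $(2)\Rightarrow(1)$, the weighted blow-up $\mu\colon\widetilde{F}\to F$ with weights $(a,b)$ opens the node, so that $\widetilde{B}_F+E$ is a crepant boundary forming a length-two cycle of rational curves; because $\widetilde{B}_F^2>0$ I would run an MMP contracting the extremal rays disjoint from the boundary, arriving at a toric surface on which $\widetilde{B}_F+E$ becomes the full toric boundary, which exhibits $\mathbb{G}_m^2$ in the complement and hence cluster type. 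For $(1)\Rightarrow(2)$, from a toric model of $(F,B_F)$ I would identify the unique log canonical place over the node realized as a toric weighted blow-up and deduce $\widetilde{B}_F^2>0$ from the bigness/nefness of the toric boundary in that model. This is consistent with the numerical picture: for weights $(a,b)$ one has $\widetilde{B}_F^2=B_F^2-\tfrac{(a+b)^2}{ab}$ with $\tfrac{(a+b)^2}{ab}\geq 4$, so (2) reduces to a sharp lower bound on $K_F^2=B_F^2$, attained by the ordinary blow-up $(a,b)=(1,1)$.

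The main obstacle I expect is the direction $(1)\Rightarrow(2)$ together with the spreading-out step. Showing that cluster type \emph{forces} positivity after opening the node requires controlling every crepant (in particular every toric) model of $(F,B_F)$ and extracting from it the specific divisorial weighted blow-up with $\widetilde{B}_F^2>0$, as well as ruling out MMP outputs that are not toric; this is the delicate quantitative heart of the argument. A secondary difficulty is verifying rigorously that a torus on the generic fiber extends to a genuine relative torus $\mathbb{G}_m^3\hookrightarrow X\setminus B$ over $\mathbb{G}_m$, which is where the relative Picard rank one hypothesis is used in an essential way.
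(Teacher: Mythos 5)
Your overall plan---reduce to the relative setting via Theorem~\ref{introthm:relative-cluster-type} and then read everything off the general fiber---matches the spirit of the paper, and your local computation $\widetilde{B}_F^2=B_F^2-(a+b)^2/(ab)$ is the right numerical content. However, the central step of your reduction is a genuine gap: you assert that ``cluster type over $\pp^1$'' is \emph{equivalent} to the general fiber $(F,B_F)$ being of cluster type, and neither implication is established. For the spreading-out direction, a crepant map from a toric pair to the generic fiber only extends over an open subset of $\pp^1$, and even if it extended everywhere the total space of the resulting family of toric surfaces need not be a toric threefold over $\pp^1$; you acknowledge this difficulty but do not resolve it. The paper avoids it entirely in the direction $(2)\Rightarrow(1)$: it globalizes the weighted blow-up to a blow-up of $X$ along the one-dimensional log canonical center dominating $\pp^1$, then runs a $(-E)$-MMP over $\pp^1$ and analyzes the two possible outcomes (a divisorial contraction ending in a Mori fiber space to $\pp^1$, handled by the complexity-one Lemma~\ref{lem:complexity-one}; or a Mori fiber space to a surface, handled by Lemma~\ref{lem:coreg-0-index-1-surface} and Lemma~\ref{lem:two-horizontal-sections}). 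No fiberwise cluster-type statement is needed there.

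For $(1)\Rightarrow(2)$, your sketch (``from a toric model of $(F,B_F)$ identify the unique log canonical place realized as a toric weighted blow-up'') skips exactly the hard point, which you yourself flag as the ``delicate quantitative heart.'' An arbitrary toric model of the fiber may extract several log canonical places over the node and may have exceptional centers meeting torus-invariant points, and it is not clear how to produce from it a \emph{single} divisorial weighted blow-up with $\widetilde{B}_F^2>0$. The paper controls this by exploiting the threefold hypothesis $\rho(X/\pp^1)=1$: comparing $\rho(T/\pp^1)=|B_T|-4$ with $\rho(Y/\pp^1)=|B_Y|-2$ forces the non-boundary exceptional divisor $E_Y$ of $Y\dashrightarrow T$ to have exactly two prime components, which after restriction to the general fiber places the exceptional centers on at most two components of the toric boundary and away from torus-invariant points; this is precisely the hypothesis of Lemma~\ref{lem:irreducible-nodal}, which then yields the required weighted blow-up. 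Your purely fiberwise criterion discards this constraint, so even if it is ultimately true it would require a separate, harder surface argument that you have not supplied.
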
 

One of the interesting features of the previous theorem
is that it reduces a problem about three-dimensional birational geometry of projective varieties
to a problem about germs of two-dimensional singularities.
Indeed, once we know the self-intersection $B_F^2$, then Theorem~\ref{introthm:relative-criteria-irreducible} reduces the problem of understanding whether $(X,B)$ is of cluster type to study the toric surface singularity at the nodal point of the irreducible curve $B_F$.

The paper is organized as follows:
In Section~\ref{sec:prelim}, we explain some preliminary results about the index and coregularity of log Calabi--Yau pairs. Further, we recall the concepts of log rational and cluster type pairs. 
In Section~\ref{sec:rel-ct}, we prove Theorem~\ref{introthm:relative-cluster-type} regarding relative cluster type pars.
In Section~\ref{sec:criteria-toric-blow-up}, we prove Theorem~\ref{introthm:relative-criteria-irreducible} that gives a criterion for certain log Calabi--Yau pairs of dimension three to be of cluster type in terms of toric blow-ups.
In Section~\ref{sec:reducible}, we use the previous two sections to classify cluster type pairs $(\pp^3,B)$ with $B$ reducible.
In Section~\ref{sec:irred}, we study whether a log Calabi--Yau pair $(\pp^3,B)$ with $B$ irreducible, is of cluster type.
Finally, in Section~\ref{sec:ex-and-quest}, we provide some examples and propose some questions for further research.

\subsection*{Acknowledgements}

This project was initiated by the authors at the \href{https://impa.br/en_US/eventos-do-impa/2024-2/v-latin-american-school-of-algebraic-geometry-and-applications-v-elga/}{V Latin American School of Algebraic Geometry}. The authors would like to thank the organizers for this collaborative space. 
Parts of this work were carried out while the first and third authors participated in the conference \href{https://aimath.org/pastworkshops/higherdimlogcy.html}{Higher-dimensional log Calabi--Yau pairs} at the \href{https://aimath.org/}{American Institute of Mathematics} (AIM). The authors would like to thank the organizers and staff at AIM for their working environment and hospitality.
The authors would like to thank Paul Hacking, Radu Laza, and Jose Ignacio Y\'a\~nez for very useful comments.

\section{Preliminaries}
\label{sec:prelim}

We work over the field of complex numbers $\cc$.
We write $\Sigma^n$ for the sum of the coordinate hyperplanes in $\pp^n$. In this section, we introduce some preliminary results
regarding log rational pairs and cluster type pairs.
In Subsection~\ref{subsec:2div} and Subsection~\ref{subsec:comp1}, we give two criteria for 
log Calabi--Yau $3$-folds to be of cluster type.
For the notion of log pairs, we refer the reader to~\cite{KM98}. 
For the singularities of the MMP, we refer the reader to~\cite{Kol13}.

\subsection{Cluster type pairs}\label{subsec:ct}
In this subsection, we recall the notions
of log rationality and cluster type pairs.
First, we recall the definition
of crepant birational equivalence.

\begin{definition}\label{def:cbir}
{\em 
Let $(X,B)$ and $(X',B')$ be two sub-pairs.
We say that they are {\em crepant birational equivalent}, 
denoted by $(X,B)\simeq_{\rm cbir}(X',B')$ if the following conditions are satisfied:
\begin{enumerate}
\item there is a common resolution
$p\colon Y\rightarrow X$ and $q\colon Y\rightarrow X'$, and 
\item we have $p^*(K_X+B)=q^*(K_{X'}+B')$.
\end{enumerate}
In the previous setting, we say that $q\circ p^{-1}\colon (X,B)\dashrightarrow (X',B')$ is a {\em crepant birational map}.
}
\end{definition}

\begin{notation}
{\em 
Let $p\colon Y\dashrightarrow X$ be a birational map.
Let $(X,B)$ be a sub-pair structure on $X$.
The {\em log pull-back} of $(X,B)$ in $Y$ 
is the unique divisor $B_Y$ on $Y$ for which
the birational map
$p\colon (Y,B_Y)\dashrightarrow (X,B)$ is a crepant birational map.
}
\end{notation}

\begin{definition}
{\em 
We say that $(X,B)$ its a {\em log Calabi--Yau pair} if
$(X,B)$ has log canonical singularities and $K_X+B\equiv 0$.
}
\end{definition}

If $(X,B)$ and $(X',B')$ are two crepant birational equivalent pairs, then it follows from the negativity lemma that $(X,B)$ is log Calabi--Yau
if and only if $(X',B')$ is log Calabi--Yau.

If $X$ is projective and $(X,B)$ is a log Calabi--Yau pair, then it follows from the work of Fujino and Gongyo that $m(K_X+B)\sim 0$ for a suitable positive integer $m$ (see, e.g.,~\cite[Theorem 1.2]{Gon13}).

\begin{definition}
{\em 
Let $(X,B)$ be a log Calabi--Yau pair.
The {\em index} of $(X,B)$ is the smallest positive integer $i$ for which $i(K_X+B)\sim 0$.
For instance, if $(X,B)$ has index one, then $B$ is an integral divisor.
}
\end{definition}

\begin{definition}
\label{def:coreg}
{\em 
Let $(X,B)$ be a log Calabi--Yau pair.
The {\em coregularity} of $(X,B)$ is the dimension 
of a minimal log canonical center
in any dlt modification of $(X,B)$.
}
\end{definition}

The following lemma is well-known to the experts.
The proof follows from~\cite[Theorem 3]{dFKX} and~\cite[Lemma 3.2]{FMM22}.

\begin{lemma}\label{lem:cbir-pres}
Let $(X,B)$ and $(X',B')$ be two crepant birational equivalent log Calabi--Yau pairs.
Then, the pairs $(X,B)$ and $(X',B')$ have the same
index and the same coregularity.
\end{lemma}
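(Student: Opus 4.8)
The plan is to prove that crepant birational equivalence preserves both the index and the coregularity by showing each quantity can be computed from data on a common resolution, and that this data is symmetric in the two pairs. The key observation is that both invariants are intrinsic to the crepant birational equivalence class, so I would reduce everything to comparisons on a single model $Y$ dominating both $X$ and $X'$.

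First I would recall the setup: by Definition~\ref{def:cbir} there is a common resolution $p\colon Y\to X$ and $q\colon Y\to X'$ with $p^*(K_X+B)=q^*(K_{X'}+B')$. I would write $K_Y+B_Y = p^*(K_X+B)$ and $K_Y+B'_Y = q^*(K_{X'}+B')$, where $B_Y$ and $B'_Y$ are the log pull-backs; by the crepancy hypothesis these two sub-boundaries are equal as $\qq$-divisors on $Y$, i.e. $B_Y=B'_Y$. This is the crucial point: the log discrepancy data on $Y$ is literally the same for both pairs.

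For the index, I would argue that $i(K_X+B)\sim 0$ if and only if $i(K_{X'}+B')\sim 0$. Since $\sim$-triviality pulls back and pushes forward under birational morphisms between normal varieties (linear equivalence of $\qq$-Cartier divisors is preserved by $p^*$ and recovered by $p_*$ for a birational morphism), and since $p^*(i(K_X+B)) = i(K_Y+B_Y) = i(K_Y+B'_Y) = q^*(i(K_{X'}+B'))$, the smallest such $i$ must coincide. Here is where I would invoke~\cite[Lemma 3.2]{FMM22} to make the linear-equivalence comparison rigorous, rather than merely the numerical statement $K_X+B\equiv 0$. For the coregularity, I would use that it is defined via a minimal log canonical center on a dlt modification, and that~\cite[Theorem 3]{dFKX} (the theory of the dual complex and its invariance under crepant birational maps) shows the dual complex of a dlt modification is a crepant-birational invariant up to the relevant equivalence; the dimension of a minimal log canonical center is $\dim X - 1 - \dim(\text{dual complex})$, a quantity read off from the invariant combinatorial data. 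Since $(X,B)$ and $(X',B')$ share the equalized boundary $B_Y=B'_Y$ on $Y$, any dlt modification of one is crepant birational to a dlt modification of the other, so the minimal log canonical center dimensions agree.

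The main obstacle I anticipate is the coregularity comparison rather than the index. The index statement is essentially formal once the linear-equivalence bookkeeping under $p$ and $q$ is in place. The coregularity, however, requires knowing that the notion is well-defined (independent of the chosen dlt modification) and genuinely invariant under crepant birational maps; this is exactly the content of~\cite[Theorem 3]{dFKX} on the birational invariance of the dual complex of the lc centers, combined with the comparison lemma~\cite[Lemma 3.2]{FMM22}. I would lean on these two cited results to supply the technical heart, so the proof reduces to assembling them: equalize the boundaries on $Y$, invoke dual-complex invariance to match minimal lc centers, and invoke the linear-equivalence comparison to match indices.
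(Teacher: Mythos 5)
Your proposal is correct and follows essentially the same route as the paper, which simply cites \cite[Theorem 3]{dFKX} for the coregularity (via invariance of the dual complex) and \cite[Lemma 3.2]{FMM22} for the index; you have merely filled in the bookkeeping on the common resolution that the paper leaves implicit. No gaps.
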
 

\begin{definition}\label{def:log-rat}
{\em
We say that a pair $(X,B)$ is {\em log rational} if there exists a crepant biratonal map
$\phi\colon (\pp^n,\Sigma^n) \dashrightarrow (X,B)$.
We say that a variety $X$ is {\em log rational type}
if there exists a boundary $B$ for which $(X,B)$ is a log rational pair.
}
\end{definition}

Note that a log rational pair must be log Calabi--Yau of index one and coregularity zero due to Lemma~\ref{lem:cbir-pres}. Furthermore, a variety of log rational type must be rational.

\begin{definition}\label{def:ct}
{\em 
We say that a pair $(X,B)$ is {\em of cluster type}
if there exists a crepant birational map
$\phi\colon (\pp^n,\Sigma^n)\dashrightarrow (X,B)$
such that ${\rm codim}_{\pp^n}({\rm Ex}(\phi)\cap \mathbb{G}_m^n)\geq 2$.
In other words, the crepant birational map
induces a birational contraction in the algebraic torus.
A variety $X$ is said to be {\em of cluster type} 
if there exists a boundary divisor $B$ on $X$ for which
the pair $(X,B)$ is of cluster type.
}
\end{definition}

It was proved by Enwright, the second author, and third author
that Fano varieties of cluster type admit a dense open set covered by algebraic tori. More precisely, the following theorem holds:

\begin{theorem}\label{thm:Fano-ct}
Let $X$ be a $n$-dimensional $\qq$-factorial Fano variety and $(X,B)$ be a log Calabi--Yau pair. 
Then, the following statements are equivalent:
\begin{enumerate}
\item[(i)] the pair $(X,B)$ is of cluster type,
\item[(ii)] the open set $X\setminus B$ contains 
an algebraic torus of dimension $n$, and
\item[(iii)] the open set $X\setminus B$ is divisorially
covered by algebraic tori\footnote{This means that $X\setminus B$ is covered by algebraic tori up to a closed subset of codimension at least two.}.
\end{enumerate}
\end{theorem}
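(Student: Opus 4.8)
The plan is to establish the cycle of implications $(iii)\Rightarrow(ii)\Rightarrow(i)\Rightarrow(iii)$. The implication $(iii)\Rightarrow(ii)$ is immediate: if $X\setminus B$ is covered by copies of $\G_m^n$ away from a closed subset of codimension at least two, then in particular at least one such torus is contained in $X\setminus B$, which is exactly the statement of $(ii)$. The two substantial directions are $(ii)\Rightarrow(i)$ and $(i)\Rightarrow(iii)$.

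The core of $(ii)\Rightarrow(i)$ is a rigidity statement for log Calabi--Yau structures supported on the toric boundary of $\pp^n$. Suppose $\iota\colon \G_m^n\hookrightarrow X\setminus B$ is an open immersion. Identifying $\G_m^n$ with $\pp^n\setminus \Sigma^n$, the map $\iota$ extends to a birational map $\phi\colon \pp^n\dashrightarrow X$ restricting to an isomorphism of $\G_m^n$ onto $\iota(\G_m^n)$; in particular $\Ex(\phi)$ is disjoint from $\G_m^n$, so the codimension condition in Definition~\ref{def:ct} holds trivially. Let $B_{\pp^n}$ be the log pull-back of $(X,B)$, so that $\phi\colon (\pp^n,B_{\pp^n})\dashrightarrow (X,B)$ is crepant. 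Since $\iota(\G_m^n)$ is disjoint from $B$ and $\phi$ is an isomorphism over it, the divisor $B_{\pp^n}$ is supported on $\Sigma^n=\sum_{i=0}^n H_i$, say $B_{\pp^n}=\sum_{i=0}^n a_iH_i$. By Lemma~\ref{lem:cbir-pres} and the negativity lemma the pair $(\pp^n,B_{\pp^n})$ is again log Calabi--Yau and log canonical, so $a_i\leq 1$ for every $i$, while $K_{\pp^n}+B_{\pp^n}\equiv 0$ forces $\sum_{i=0}^n a_i=n+1$. As $\Sigma^n$ has exactly $n+1$ components, this is possible only if $a_i=1$ for all $i$, i.e. $B_{\pp^n}=\Sigma^n$. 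Hence $\phi\colon (\pp^n,\Sigma^n)\dashrightarrow (X,B)$ is a crepant birational map witnessing that $(X,B)$ is of cluster type.

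For $(i)\Rightarrow(iii)$ I would start from a crepant birational map $\phi\colon (\pp^n,\Sigma^n)\dashrightarrow (X,B)$ with $\codim_{\pp^n}(\Ex(\phi)\cap \G_m^n)\geq 2$. Choosing a common resolution $p\colon Y\to \pp^n$ and $q\colon Y\to X$, the codimension hypothesis guarantees that $\phi$ restricts to an isomorphism between a big open subset of $\G_m^n$ and an open subset $U_0\subseteq X\setminus B$. The task is then to cover the remaining divisors of $X\setminus B$ by further algebraic tori. Since $X$ is $\qq$-factorial and Fano, the plan is to run a crepant ($K_X+B$)-trivial MMP relating different toric crepant models of $(X,B)$: each such model carries its own dense torus, and one must show that the torus-contraction condition propagates along the steps, producing for every prime divisor $D\subseteq X$ with $D\not\subseteq B$ a crepant birational map from $(\pp^n,\Sigma^n)$ that is an isomorphism at the generic point of $D$, hence an embedded torus meeting $D$. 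The union of these tori then covers $X\setminus B$ outside a set of codimension at least two, giving $(iii)$.

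The main obstacle is precisely this last step: upgrading the single big open torus produced by one cluster type map to a family of genuine algebraic tori meeting every divisor of $X\setminus B$. The difficulty is twofold. First, the cluster type map yields only a big open subset of $\G_m^n$ embedded in $X$ rather than an honest torus, so one must use $\qq$-factoriality together with the isomorphism-in-codimension-one property to extend the embedding across codimension two loci. Second, divisors of $X$ that are extracted relative to $\pp^n$ are a priori not met by the original torus, and controlling them requires the MMP for crepant models of log Calabi--Yau pairs to remain compatible with the codimension condition of Definition~\ref{def:ct}; verifying that the cluster type property is preserved under each elementary modification is the technical heart of the argument, and where the Fano hypothesis is genuinely used.
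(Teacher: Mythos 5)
First, a point of comparison: the paper itself gives no proof of Theorem~\ref{thm:Fano-ct} --- it is recalled from \cite[Theorem 1.3]{EFM24}, as the surrounding text makes explicit. So your proposal cannot be matched against an internal argument; it has to stand on its own.

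On its own terms, your implication (ii)$\Rightarrow$(i) is correct and complete. An $n$-dimensional torus inside the irreducible $n$-fold $X$ is automatically a dense open subset, so it induces a birational map $\phi\colon\pp^n\dashrightarrow X$ with ${\rm Ex}(\phi)\cap\G_m^n=\emptyset$; every prime divisor of $\pp^n$ appearing with nonzero coefficient in the log pull-back $B_{\pp^n}$ is either $\phi$-exceptional or the strict transform of a component of $B$, and in either case it is disjoint from $\G_m^n$, hence lies in $\Sigma^n$; and the degree computation $\sum_{i=0}^n a_i=n+1$ combined with $a_i\le 1$ (log canonicity of discrepancies is preserved under crepant pull-back) forces $B_{\pp^n}=\Sigma^n$. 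It is worth noting that this direction uses neither the Fano nor the $\qq$-factoriality hypothesis, which correctly signals that it is not where the content of the theorem lies. The implication (iii)$\Rightarrow$(ii) is indeed immediate.

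The genuine gap is (i)$\Rightarrow$(iii), which you have located but not closed. What you write there is a plan, not an argument: given a prime divisor $D\subseteq X$ with $D\not\subseteq\supp(B)$, you do not construct a crepant toric model of $(X,B)$ whose dense torus actually meets $D$, nor do you verify that the crepant modifications connecting different toric models preserve the codimension-two condition of Definition~\ref{def:ct}. These two verifications are precisely the substance of \cite[Theorem 1.3]{EFM24}; the Fano and $\qq$-factoriality hypotheses enter there (via the Fano type property, which allows one to run the relevant MMPs and control the finitely many crepant models), not in the parts you completed. As it stands, your proposal establishes the equivalence (i)$\Leftrightarrow$(ii) --- with (i)$\Rightarrow$(ii) obtained by restricting a cluster type map to the big open torus and then noting the embedding extends, a point you gesture at but should also make explicit --- while (iii) is only shown to imply the other two, not to follow from them.
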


Now, we turn to recall some tools to prove that a given log Calabi--Yau pair is of cluster type.

\begin{lemma}\label{lem:cluster-type-under-birational}
Let $(X,B)$ be a log Calabi--Yau pair.
Let $\phi\colon Y\dashrightarrow X$ be a projective birational map. Assume that the divisorial locus of ${\rm Ex}(\phi)$ only consists of log canonical places $(X,B)$.
Let $(Y,B_Y)$ be the log pull-back of $(X,B)$ to $Y$.
If $(Y,B_Y)$ is of cluster type, then so $(X,B)$ is of cluster type.
\end{lemma}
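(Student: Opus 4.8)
The goal is to show that cluster type descends along the birational map $\phi\colon Y \dashrightarrow X$ under the stated hypotheses. The plan is to start from a crepant birational map witnessing that $(Y,B_Y)$ is of cluster type, namely $\psi\colon (\pp^n,\Sigma^n)\dashrightarrow (Y,B_Y)$ with $\operatorname{codim}_{\pp^n}(\operatorname{Ex}(\psi)\cap \G_m^n)\geq 2$, and then compose it with $\phi$ to obtain a crepant birational map $\phi\circ\psi\colon (\pp^n,\Sigma^n)\dashrightarrow (X,B)$. The composition is again crepant birational: since $\phi\colon (Y,B_Y)\dashrightarrow (X,B)$ is crepant birational by definition of the log pull-back, and composition of crepant birational maps is crepant birational (one picks a common resolution dominating both $Y$ and $X$ and uses transitivity of pullback of log canonical divisors). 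So the only thing left to verify is the codimension condition on the exceptional locus of the composition, intersected with the torus $\G_m^n \subseteq \pp^n$.

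For the exceptional locus I would use the standard containment
\begin{equation*}
\operatorname{Ex}(\phi\circ\psi) \subseteq \operatorname{Ex}(\psi) \cup \psi^{-1}(\operatorname{Ex}(\phi)),
\end{equation*}
where the second term is understood via the strict/total transform under $\psi$. Intersecting with $\G_m^n$, the contribution of $\operatorname{Ex}(\psi)$ already has codimension at least two by hypothesis, so the issue is to control $\psi^{-1}(\operatorname{Ex}(\phi))\cap \G_m^n$. Here the key hypothesis enters: the divisorial part of $\operatorname{Ex}(\phi)$ consists only of log canonical places of $(X,B)$. A log canonical place of a log Calabi--Yau pair has log discrepancy zero, hence the corresponding divisor on any crepant model appears in the boundary with coefficient one; pulling back through the crepant map $\psi$, any such divisor must be a component of $\Sigma^n$, i.e. a coordinate hyperplane, which is disjoint from the open torus $\G_m^n$. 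Therefore the divisorial part of $\psi^{-1}(\operatorname{Ex}(\phi))$ does not meet $\G_m^n$ at all, and only the non-divisorial (codimension $\geq 2$) part of $\operatorname{Ex}(\phi)$ can pull back to meet the torus, which automatically has codimension at least two there.

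Putting these together, $\operatorname{Ex}(\phi\circ\psi)\cap \G_m^n$ is contained in a union of two sets each of codimension at least two in $\pp^n$, hence has codimension at least two, and $\phi\circ\psi$ witnesses that $(X,B)$ is of cluster type. The main obstacle I expect is the bookkeeping in the middle step: one must argue carefully that divisorial components of $\operatorname{Ex}(\phi)$, being log canonical places, correspond under the crepant map $\psi$ to boundary components with coefficient one and hence to components of $\Sigma^n$ rather than to divisors meeting the torus. This requires invoking the crepancy relation $p^*(K_{\pp^n}+\Sigma^n)=q^*(K_X+B)$ on a common resolution and matching up the divisors with log discrepancy zero on either side; the negativity lemma, together with the fact that a log canonical place is intrinsic to the crepant birational equivalence class, is what makes this matching work. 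Once that identification is secure, the codimension count is immediate.
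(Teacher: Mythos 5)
Your proposal is correct and follows essentially the same route as the paper: compose the witnessing map $\psi$ for $(Y,B_Y)$ with $\phi$, and observe that any new divisor contracted by the composition is a log canonical place of $(X,B)$, hence of $(\pp^n,\Sigma^n)$, hence a component of $\Sigma^n$ disjoint from $\G_m^n$. The paper states this in two sentences; your write-up just makes explicit the identification of log canonical places across the crepant equivalence, which is the correct justification.
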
 

\begin{proof}
Assume that $(Y,B_Y)$ is of cluster type.
Then, there exists a crepant birational map 
$\psi\colon (\pp^n,\Sigma^n)\dashrightarrow (Y,B_Y)$ that induces a birational contraction in $\mathbb{G}_m^n$.
As $\phi$ only extracts divisors which are log canonical places of $(X,B)$, we conclude that the composition
$\phi\circ \psi \colon (\pp^n,\Sigma^n)\dashrightarrow (X,B)$
induces a birational contraction on the algebraic torus $\mathbb{G}_m^n$.
Therefore, the pair $(X,B)$ is of cluster type.
\end{proof} 

The following lemma gives a characterization
of cluster type pairs by means of dlt modifications
and the minimal model program.

\begin{lemma}\label{lem:MMP-charact-ct}
Let $(X,B)$ be a log Calabi--Yau pair of dimension $n$.
Then, the following conditions are equivalent:
\begin{enumerate}
\item the pair $(X,B)$ is of cluster type,
\item there exists a $\qq$-factorial dlt modification $(Y,B_Y)$ and an effective divisor $E_Y$ in $Y$ that contains no log canonical centers of $(Y,B_Y)$ such that for $\epsilon>0$ small enough the $(K_Y+B_Y+\epsilon E_Y)$-MMP terminates with a toric log Calabi--Yau pair after contracting all the components of $E_Y$.
\end{enumerate} 
\end{lemma}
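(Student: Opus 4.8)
The plan is to prove the two implications $(1)\Rightarrow(2)$ and $(2)\Rightarrow(1)$ separately, using as a bridge the standard toric fact that a toric log Calabi--Yau pair $(T,B_T)$ of index one, where $B_T$ is the reduced toric boundary, is crepant birational to $(\pp^n,\Sigma^n)$; indeed, any two complete toric varieties with their full toric boundaries are joined by a sequence of toric blow-ups and flips, each of which is crepant for the toric boundary. I will also use the toric fact that the log canonical places of $(\pp^n,\Sigma^n)$ are \emph{exactly} the torus-invariant divisorial valuations, so that every such place is realized as a boundary divisor on some toric blow-up of $\pp^n$.

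For $(2)\Rightarrow(1)$, assume the dlt modification $(Y,B_Y)$ and the divisor $E_Y$ as in (2), with the $(K_Y+B_Y+\epsilon E_Y)$-MMP terminating at a toric log Calabi--Yau pair $(T,B_T)$ after contracting $E_Y$. Since $K_Y+B_Y\equiv 0$, every step of this MMP is crepant for $(Y,B_Y)$, so $(Y,B_Y)\simeq_{\rm cbir}(T,B_T)\simeq_{\rm cbir}(\pp^n,\Sigma^n)$. I would then show that the resulting crepant birational map $\psi\colon(\pp^n,\Sigma^n)\dashrightarrow(Y,B_Y)$ contracts only components of $\Sigma^n$: the map $Y\dashrightarrow T$ is a birational contraction (being an MMP it only contracts $E_Y$ and extracts nothing), so its inverse $T\dashrightarrow Y$ contracts no divisor, while the toric map $\pp^n\dashrightarrow T$ contracts only torus-invariant, hence boundary, divisors. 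Thus $\codim_{\pp^n}({\rm Ex}(\psi)\cap\G_m^n)\geq 2$ and $(Y,B_Y)$ is of cluster type. Since a dlt modification extracts only log canonical places of $(X,B)$, Lemma~\ref{lem:cluster-type-under-birational} applies to $Y\to X$ and yields that $(X,B)$ is of cluster type.

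For $(1)\Rightarrow(2)$, which I expect to be the main work, start from a crepant birational map $\phi\colon(\pp^n,\Sigma^n)\dashrightarrow(X,B)$ with $\codim_{\pp^n}({\rm Ex}(\phi)\cap\G_m^n)\geq 2$, i.e. $\phi$ contracts only divisors contained in $\Sigma^n$. I would first choose a $\qq$-factorial dlt modification $(Y,B_Y)\to(X,B)$, taken high enough so that the induced map $Y\dashrightarrow\pp^n$ is a birational contraction; its existence follows by resolving $\phi$ and running a relative MMP over $X$. Because $\phi$ is crepant and contracts only boundary divisors, each torus divisor of $\pp^n$ maps to a non-boundary divisor of $X$, and conversely the non-boundary divisors of $Y$ split into those dominating torus divisors of $\pp^n$ and those that are exceptional over $\pp^n$. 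I set $E_Y$ to be the reduced sum of the latter. Using that every log canonical place of $(\pp^n,\Sigma^n)$ is toric, the places extracted by the dlt modification are realized as boundary divisors of a suitable toric blow-up $(T,B_T)$ of $\pp^n$, and then $Y\dashrightarrow T$ becomes a birational contraction whose exceptional locus is exactly $E_Y$.

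It then remains to verify the two defining features of (2): that $E_Y$ contains no log canonical center of $(Y,B_Y)$, and that the $(K_Y+B_Y+\epsilon E_Y)$-MMP (which for $0<\epsilon\ll 1$ coincides with the $E_Y$-MMP, since $K_Y+B_Y\equiv 0$) terminates at $(T,B_T)$ after contracting $E_Y$. For the first point I would argue that the image of $E_Y$ in $T$ lies in the open torus $T\setminus B_T$, whereas every log canonical center of $(Y,B_Y)$ dominates a log canonical center of $(\pp^n,\Sigma^n)$ and hence lies over $\Sigma^n$; therefore no log canonical center is contained in $E_Y$. \textbf{The main obstacle} is the second point: one must show that the $E_Y$-MMP contracts precisely the components of $E_Y$ and lands on the toric model $(T,B_T)$, rather than extracting divisors or stopping prematurely. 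I expect this to follow from the fact that $Y\dashrightarrow T$ is already a birational contraction with exceptional divisors exactly $E_Y$, combined with $\qq$-factoriality and the fact that such an MMP contracting the $E_Y$-positive part reaches the model obtained by contracting $E_Y$; the toricity of the output is then inherited from $(T,B_T)$. This reduces the entire implication to the termination and contraction behavior of this single MMP.
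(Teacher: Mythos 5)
Your proof of $(2)\Rightarrow(1)$ is essentially the paper's: run the MMP to the toric model $(T,B_T)$, note that $T\dashrightarrow Y$ only extracts the components of $E_Y$, that the dlt modification $Y\to X$ only contracts log canonical places, precompose with a toric crepant map $(\pp^n,\Sigma^n)\dashrightarrow (T,B_T)$ (which is an isomorphism on $\G_m^n$), and conclude via Lemma~\ref{lem:cluster-type-under-birational}. That half is correct and matches the paper.

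The divergence is in $(1)\Rightarrow(2)$, which the paper does not argue directly at all: it is delegated entirely to \cite[Lemma 2.15 \& Lemma 2.29]{JM24}. You attempt a self-contained proof, and the skeleton (resolve $\phi$, take a high dlt modification $Y$, realize the extracted lc places on a toric blow-up $T$ of $\pp^n$, let $E_Y$ be the divisors exceptional over $T$) is the right one, but two points are not actually established. First, your claim that the components of $E_Y$ have image inside the open torus $T\setminus B_T$ is unjustified and false in general: a non-lc place of $(T,B_T)$ can perfectly well have center inside $B_T$ (e.g.\ the exceptional divisor of the blow-up of a general point of a boundary line of $(\pp^2,\Sigma^2)$ has log discrepancy one). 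What you actually need is only that $E_Y$ contains no log canonical center of $(Y,B_Y)$, and the standard way to arrange this is to take $Y$ high enough that $(Y,B_Y+\epsilon E_Y)$ is itself dlt — exactly as the paper does inside the proof of Theorem~\ref{thm:relative-cluster-type}. Second, and decisively, the step you yourself flag as ``the main obstacle'' — that the $(K_Y+B_Y+\epsilon E_Y)$-MMP terminates, contracts precisely $\supp(E_Y)$ and nothing else, and ends on a \emph{toric} log Calabi--Yau pair — is left as an expectation. This is the entire content of the implication: one must justify termination (e.g.\ via a Fano type property of $Y$), identify the output as a minimal model on which the crepant-exceptional divisor $E_Y$ has been contracted, and show that the resulting $\qq$-factorial model, which a priori differs from $T$ only by a small $(K+B)$-trivial modification, is still toric. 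None of this is automatic, and it is precisely the substance of \cite[Lemma 2.15]{JM24}. As written, the hard direction is not proved.
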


\begin{proof}
First, assume that $(2)$ holds.
Let $\psi\colon Y\dashrightarrow T$ be the outcome of the $(K_Y+B_Y+\epsilon E_Y)$-MMP.
If we denote by $B_T$ the push-forward of $B_Y$ in $T$,
then the pair $(T,B_T)$ is a toric log Calabi--Yau pair.
Note that the crepant birational map 
$\psi^{-1}\colon (T,B_T)\dashrightarrow (Y,B_Y)$ only extracts canonical places of $(T,B_T)$, namely the components of $E_Y$, 
so it induces a birational contraction from
the algebraic torus $\mathbb{G}_m^n$.
On the other hand, as $g\colon (Y,B_Y)\rightarrow (X,B)$ is a dlt modification, then it only contracts components of $\lfloor B_Y\rfloor$.
Therefore, the composition
$g\circ \psi^{-1}\colon (T,B_T)\dashrightarrow (X,B)$
is a crepant birational map that induces a contraction
on the algebraic torurs $T\setminus B_T$.
Finally, pick any crepant toric birational map
$f\colon (\pp^n,\Sigma^n)\dashrightarrow (T,B_T)$.
As $f$ is toric, it is indeed an isomorphism on the torus $\mathbb{G}_m^n$.
We conclude that $f\circ g\circ \psi^{-1}\colon (\pp^n,\Sigma^n)\dashrightarrow (X,B)$ is a crepant birational map 
that induces a birational contraction
on the algebraic torus $\mathbb{G}_m^n$.
Therefore, the pair $(X,B)$ is of cluster type.

The other direction follows from~\cite[Lemma 2.15 \& Lemma 2.29]{JM24}.
\end{proof}

\subsection{Pairs of coregularity zero and complexity one}\label{subsec:comp1}
In this subsection, we give criteria to be cluster type
for log Calabi--Yau pairs with sufficient boundary components.

\begin{definition}
{\em 
The {\em complexity} of a log Calabi--Yau pair 
$(X,B)$ is the following value:
\[
c(X,B):=\dim X + \dim_\qq {\rm Cl}_\qq(X) - |B|,
\]
where $|B|$ is the sum of the coefficients of $B$.
}
\end{definition}

The following theorem is proved by Brown, M\textsuperscript{c}Kernan, Svaldi, and Zong in~\cite[Theorem 1.2]{BMSZ18}.
It is known as the characterization of toric varieties via the complexity.

\begin{theorem}\label{thm:comp0}
Let $(X,B)$ be a log Calabi--Yau pair.
Then, we have $c(X,B)\geq 0$.
Furthermore, if $c(X,B)<1$, then the pair $(X,\lfloor B\rfloor)$ is a toric pair.\footnote{Here we mean that $X$ is toric and the boundary divisor $\lfloor B\rfloor$ is torus invariant.}
\end{theorem}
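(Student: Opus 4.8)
The plan is to reduce to the $\qq$-factorial dlt setting and then establish both assertions by simultaneous induction on $\dim X$, running the minimal model program and tracking the complexity across every extremal step; the guiding principle is that a log Calabi--Yau pair carrying the maximal possible amount of boundary relative to the rank of its class group must be combinatorially toric. First I would pass to a $\qq$-factorial dlt modification $g\colon (Y,B_Y)\rightarrow (X,B)$. Since $g$ is crepant and extracts only log canonical places, $(Y,B_Y)$ is again log Calabi--Yau, and the complexity is exactly preserved: a small $\qq$-factorialization leaves $\Cl_\qq$ untouched, while each extracted divisor is an lc place, so it enters $B_Y$ with coefficient one and simultaneously raises $\dim_\qq\Cl_\qq$ by one, leaving $\dim Y+\dim_\qq\Cl_\qq(Y)-|B_Y|$ unchanged. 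As $Y$ is $\qq$-factorial we may replace $\Cl_\qq$ by the Picard rank $\rho(Y)$, so it suffices to argue on $(Y,B_Y)$; and since $g$ contracts only components of $\lfloor B_Y\rfloor$, once $(Y,\lfloor B_Y\rfloor)$ is shown to be a toric pair the contraction is toric and $(X,\lfloor B\rfloor)$ is toric as well.

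For the inequality $c\ge 0$ I would induct on $\dim Y$. Running a perturbed $(K_Y+B_Y+\epsilon E)$-MMP and checking that $c$ is nonincreasing at each step -- a divisorial contraction of a component of coefficient $d\le 1$ changes $c$ by $d-1\le 0$, a flip preserves it, since it is an isomorphism in codimension one -- one reduces to a Mori fiber space $\psi\colon Y'\rightarrow Z$ with $c(Y')\le c(Y)$ and $\dim Z<\dim Y$. The canonical bundle formula presents the base as a (generalized) log Calabi--Yau pair $(Z,B_Z)$ of lower dimension, and the key lemma to prove is that the complexity is superadditive along the fibration, $c(Y')\ge c(F)+c(Z)$, where $(F,B_F)$ is the general fiber. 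Both terms on the right are nonnegative by the inductive hypothesis, the cases $\dim=0,1$ being immediate, so $c(Y')\ge 0$, whence $c(Y)=c(X,B)\ge 0$.

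The heart of the matter, and the main obstacle, is the rigidity $c<1\Rightarrow$ toric. With the same reduction to a Mori fiber space $\psi\colon Y'\rightarrow Z$, monotonicity gives $c(Y')\le c(X,B)<1$, and superadditivity then forces $c(F)<1$ and $c(Z)<1$, so by induction both the general log fiber $(F,\lfloor B_F\rfloor)$ and the base $(Z,\lfloor B_Z\rfloor)$ are toric. The delicate step -- which is why the entire argument must be carried out in the relative category -- is to reassemble these two toric structures into a single toric structure on $Y'$: one must promote $\psi$ to a toric morphism, showing that the torus acting on $Z$ lifts to a torus on $Y'$ compatibly with the fiberwise torus and with no monodromy twisting, and it is precisely here that the strict bound $c<1$ (rather than merely $c=0$) is used, to guarantee that the small boundary deficit is concentrated in a single direction and cannot obstruct the gluing. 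Once $Y'$ is known to be a toric pair, one transports the structure back along the MMP and the dlt modification, checking that each inverse step is an extraction of a torus-invariant valuation and hence toric, to conclude that $(X,\lfloor B\rfloor)$ is a toric pair. I expect the fiber-base gluing, together with verifying the superadditivity inequality for the relative complexity, to be the technically hardest parts.
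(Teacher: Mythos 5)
The paper does not prove this statement: it is quoted verbatim from \cite[Theorem 1.2]{BMSZ18}, the complexity theorem of Brown--M\textsuperscript{c}Kernan--Svaldi--Zong, so there is no in-paper proof to compare against. Your proposal reproduces the broad skeleton of the argument in that reference --- pass to a $\qq$-factorial dlt model, run an MMP, track the complexity across divisorial contractions and flips, and induct on dimension through a Mori fiber space --- and the bookkeeping you do carry out (complexity preserved under extraction of lc places and under flips, decreased by $d-1\le 0$ under a divisorial contraction) is correct. But as written the proposal has genuine gaps at exactly the two places you yourself flag as ``to be proved,'' and those are the content of the theorem.

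First, the superadditivity $c(Y')\ge c(F)+c(Z)$ for a Mori fiber space is not a routine lemma for the absolute complexity as defined here. Horizontal components of $B_{Y'}$ can restrict to several components of $B_F$; vertical components contribute to $B_Z$ only through the canonical bundle formula, where the discriminant coefficients are log canonical thresholds rather than the original coefficients; and the ranks do not add: $\rho(Y'/Z)=1$ while $\rho(F)$ can be large because the restriction $N^1(Y')\to N^1(F)$ need not be surjective, and a larger $\rho(F)$ makes the claimed inequality strictly harder, not easier. This is precisely why \cite{BMSZ18} works with a refined ``complexity of a decomposition'' relative to a chosen subspace of the class group, for which the adjunction and fibration inequalities can actually be proved. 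Second, the reassembly step --- toric fiber plus toric base implies toric total space, and lifting a toric structure backwards through each divisorial extraction of the MMP and through the dlt modification --- is not a verification but the hard half of the theorem: extracting an arbitrary divisor over a toric variety does not yield a toric variety, and a fibration with toric fibers over a toric base need not be a toric morphism. You correctly locate where the strict bound $c<1$ must enter, but you supply no argument for how it forces the gluing, so the proposal is a plausible plan rather than a proof.
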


If the log Calabi--Yau pair has index one,
then the complexity is an integer.
In~\cite[Theorem 1.2]{Mor24b}, the author shows that a log Calabi--Yau pair
of index one, coregularity zero,
complexity one, and dimension at most three is log rational.
The proof therein actually shows that such a pair is of cluster type. 
Thus, the following statement holds. 

\begin{lemma}\label{lem:complexity-one}
Let $(X,B)$ be a log Calabi--Yau pair of index one, 
coregularity zero, and dimension at most three.
Assume that $c(X,B)=1$.
Then, the pair $(X,B)$ is of cluster type. 
\end{lemma}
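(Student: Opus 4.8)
The plan is to reduce the statement of Lemma~\ref{lem:complexity-one} directly to the cited result~\cite[Theorem 1.2]{Mor24b}, interpreting that theorem not merely as a statement about log rationality but about cluster type, as the surrounding text advertises. First I would recall the hypotheses: we are given a log Calabi--Yau pair $(X,B)$ of index one, coregularity zero, dimension at most three, and complexity exactly one. The key observation is that the proof of~\cite[Theorem 1.2]{Mor24b} produces an explicit crepant birational map $\phi\colon (\pp^n,\Sigma^n)\dashrightarrow (X,B)$ and, in doing so, controls the exceptional locus well enough to verify the cluster type condition ${\rm codim}_{\pp^n}({\rm Ex}(\phi)\cap \mathbb{G}_m^n)\geq 2$ from Definition~\ref{def:ct}. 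Thus the work is to trace through the construction in~\cite{Mor24b} and check that the birational map obtained there contracts only divisors disjoint from the open torus.

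The main structural input from~\cite{Mor24b} is that in the boundary case of the complexity inequality (Theorem~\ref{thm:comp0}), a pair of complexity one admits enough boundary components and enough structure to run a minimal model program that terminates in a toric model. Concretely, I would pass to a $\qq$-factorial dlt modification $(Y,B_Y)$ and use the characterization in Lemma~\ref{lem:MMP-charact-ct}: it suffices to exhibit an effective divisor $E_Y$ containing no log canonical centers of $(Y,B_Y)$ such that, for small $\epsilon>0$, the $(K_Y+B_Y+\epsilon E_Y)$-MMP terminates with a toric log Calabi--Yau pair after contracting exactly the components of $E_Y$. The complexity-one hypothesis, together with coregularity zero, guarantees by the analysis of~\cite{Mor24b} that after running the MMP on a suitable auxiliary divisor one lands on a pair that is very close to toric — with complexity dropping to strictly less than one — at which point Theorem~\ref{thm:comp0} forces toricity of $(T,\lfloor B_T\rfloor)$.

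The step I expect to be the main obstacle is verifying that the MMP in the complexity-one setting contracts \emph{precisely} the extra divisor $E_Y$ and produces a boundary that is genuinely toric (not merely a pair over a toric base), so that the hypotheses of Lemma~\ref{lem:MMP-charact-ct}(2) are met verbatim. In~\cite{Mor24b} the conclusion is phrased as log rationality, and one must confirm that the intermediate toric log Calabi--Yau pair appearing in that argument is exactly the $T$ required here, with $E_Y$ lying over the torus-fixed locus rather than meeting $\mathbb{G}_m^n$. Once this bookkeeping is in place, Lemma~\ref{lem:MMP-charact-ct} immediately yields that $(X,B)$ is of cluster type, completing the proof. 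The dimension restriction $\dim X\leq 3$ is used only insofar as it is a hypothesis of~\cite[Theorem 1.2]{Mor24b}, ensuring termination of the relevant minimal model programs.
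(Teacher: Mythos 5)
Your proposal matches the paper's approach: the paper gives no separate proof of this lemma, simply observing that~\cite[Theorem 1.2]{Mor24b} establishes log rationality for index one, coregularity zero, complexity one pairs of dimension at most three, and that the proof therein actually produces a crepant birational map satisfying the cluster type condition. Your additional bookkeeping via Lemma~\ref{lem:MMP-charact-ct} is a reasonable elaboration of what "the proof therein actually shows" means, but it is the same reduction.
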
 

\subsection{Conic fibrations with two horizontal divisors}\label{subsec:2div}
In this subsection, we give criteria to be of cluster type for log Calabi--Yau $3$-folds admitting fibrations to surfaces.

\begin{definition}
{\em 
Let $f\colon X\rightarrow Y$ be a fibration.
Let $(X,B)$ be a log Calabi--Yau pair structure on $X$.
We say that $f\colon (X,B)\rightarrow (Y,B_Y)$ is a {\em crepant fibration} if $(Y,B_Y)$ is the log Calabi--Yau pair structure induced by the canonical bundle formula.
}
\end{definition}

Now, we turn to provide two lemmatta that help us to detect whether log Calabi--Yau pairs admitting fibrations are of cluster type.

\begin{lemma}\label{lem:coreg-0-index-1-surface}
Let $(S,B_S)$ be a log Calabi--Yau surface of index one and coregularity zero.
Let $\pi\colon (S,B_S)\rightarrow (\pp^1,\{0\}+\{\infty\})$ be a crepant fibration.
Then, the pair $(S,B_S)$ is of cluster type.
\end{lemma}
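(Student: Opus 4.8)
The plan is to realize $(S,B_S)$ as crepant birational --- via a map that is an isomorphism on a dense algebraic torus --- to a toric surface, and then to conclude with Lemma~\ref{lem:MMP-charact-ct}. First I would pass to a $\qq$-factorial dlt modification $(S',B')$ of $(S,B_S)$; by Lemma~\ref{lem:cbir-pres} it is again log Calabi--Yau of index one and coregularity zero, and I will verify condition~(2) of Lemma~\ref{lem:MMP-charact-ct} directly for this modification. Since the coregularity is zero and $\dim S'=2$, the dual complex of the reduced boundary $B'$ is connected of dimension one, so by the structure theory of minimal log canonical centers $B'$ is an anticanonical cycle of smooth rational curves. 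The fibration lifts to $\pi'\colon(S',B')\to(\pp^1,\{0\}+\{\infty\})$, and by hypothesis the two special fibers $(\pi')^{-1}(0)$ and $(\pi')^{-1}(\infty)$ are supported on $B'$; in particular they are (chains of) components of the cycle.

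Next I analyze a general fiber $F$. From $K_{S'}+B'\equiv 0$, $F^2=0$, and adjunction one gets $K_{S'}\cdot F=-2$, so $F\cong\pp^1$ and $B'\cdot F=2$: the horizontal part of $B'$ has degree two over $\pp^1$, hence is either two sections or a single irreducible bisection. To exclude the bisection it is convenient first to run the relative minimal model program over $\pp^1$, contracting vertical $(-1)$-curves until the special fibers become irreducible and reduced, reaching a Hirzebruch surface $\mathbb{F}_n$ whose boundary $\bar B$ is still an anticanonical cycle containing the two fibers $\bar F_0,\bar F_\infty$. On $\mathbb{F}_n$ each smooth component $C$ of the cycle satisfies $C\cdot(\bar B-C)=2$ by adjunction; a horizontal bisection would meet $\bar F_0$ and $\bar F_\infty$ in total degree four, a contradiction. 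Therefore the horizontal boundary consists of two sections $H_1,H_2$, which by a Picard-group computation on $\mathbb{F}_n$ are disjoint and, together with $\bar F_0,\bar F_\infty$, form the toric boundary. Equivalently, over $\mathbb{G}_m=\pp^1\setminus\{0,\infty\}$ the complement of $H_1\cup H_2$ is $\mathbb{G}_m\times\mathbb{G}_m$, so $(\mathbb{F}_n,\bar B)$ is a toric pair.

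Finally I return to $(S',B')$. Let $E'$ be the sum of the exceptional curves of $S'$ that are not components of $B'$; choosing the dlt modification so that these curves meet $B'$ only along its smooth locus, $E'$ contains no log canonical center of $(S',B')$. Since $K_{S'}+B'\equiv 0$, for $0<\epsilon\ll 1$ the $(K_{S'}+B'+\epsilon E')$-MMP is directed by $\epsilon E'$ and therefore contracts precisely the components of $E'$ (only curves $C\subseteq\Supp(E')$ satisfy $E'\cdot C<0$). Its output $T$ is obtained from the toric surface $\mathbb{F}_n$ above by blow-ups at nodes of the boundary, which are torus-fixed points, so $(T,B_T)$ is again a toric log Calabi--Yau pair. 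This verifies condition~(2) of Lemma~\ref{lem:MMP-charact-ct}, and hence $(S,B_S)$ is of cluster type. The main obstacle is the middle step: controlling the horizontal boundary --- ruling out an irreducible bisection and identifying the resulting surface as toric --- together with the bookkeeping ensuring that the perturbing divisor $E'$ avoids all log canonical centers, so that Lemma~\ref{lem:MMP-charact-ct} applies on the nose.
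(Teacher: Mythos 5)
Your argument hinges on the assertion that, for the dlt modification $(S',B')$, the two fibers $(\pi')^{-1}(0)$ and $(\pi')^{-1}(\infty)$ are supported on $B'$ ``by hypothesis.'' This is not a hypothesis of the lemma, and it is false in general. Being a crepant fibration onto $(\pp^1,\{0\}+\{\infty\})$ only says that the discriminant of the canonical bundle formula equals $\{0\}+\{\infty\}$, i.e., that \emph{some} vertical log canonical place lies over each of $0$ and $\infty$; it does not force every component of those fibers into the boundary. Concretely, take $\pp^1\times\pp^1$ with its toric boundary and the first projection, and blow up a point of the fiber over $0$ lying outside the two horizontal boundary components: the resulting pair is log smooth (hence dlt), crepant to the original, and still a crepant fibration over $(\pp^1,\{0\}+\{\infty\})$, but the exceptional curve is a vertical component over $0$ with log discrepancy one, so it neither lies in nor can be added to the boundary. (Note that Theorem~\ref{introthm:relative-cluster-type} lists $\pi^{-1}(0)\cup\pi^{-1}(\infty)\subseteq\supp(B)$ as a separate assumption precisely because it is not automatic.) Your subsequent steps genuinely use the claim: both the exclusion of a horizontal bisection via $C\cdot(\bar B-C)=2$ and the identification of $\bar B$ with the toric boundary $H_1+H_2+\bar F_0+\bar F_\infty$ of $\mathbb{F}_n$ require $\bar F_0,\bar F_\infty\subseteq\bar B$.

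The gap is repairable, but only after descending to a model where the fibers over $0$ and $\infty$ are irreducible: for an irreducible fiber $mG$, the discriminant coefficient is $1-(1-\coeff_G B)/m$, which equals $1$ only if $\coeff_G B=1$, so on a relatively minimal model the vertical boundary components come for free. This is essentially what the paper does, and much more economically: it passes to a crepant model of relative Picard rank one over $\pp^1$, notes that the two vertical components are then automatic and that $B\cdot F=2$ on a general fiber yields at least one horizontal component, so the complexity is at most $2+2-3=1$, and Lemma~\ref{lem:complexity-one} concludes. Your plan of exhibiting the toric model by hand can be made to work, but as written it also leaves $E'$ ill-defined: the dlt modification $S'\to S$ extracts only log canonical places, so ``exceptional curves of $S'$ that are not components of $B'$'' is empty unless you mean exceptional over the toric model, which at that point of the argument you have not yet constructed.
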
 

\begin{proof}
Applying a birational modification that only extracts
log canonical places of $(S,B_S)$, we obtain a new log Calabi--Yau surface $(S',B_{S'})$ over $\pp^1$
satisfying the following conditions:
\begin{itemize}
\item the variety $S'$ is $\qq$-factorial,
\item the morphism $\pi'\colon S'\rightarrow \pp^1$
has relative Picard rank one, and
\item the fibers ${\pi'}^{-1}(0)$ and ${\pi'}^{-1}(\infty)$ are contained in $\supp(B_{S'})$. 
\end{itemize} 
Then, we conclude that $c(S',B_{S'})\leq 1$.
Indeed, we have $\rho(S')=2$, and $B_{S'}$ has at least three components, two verticals and at least one horizontal over $\pp^1$.
Therefore, by Lemma~\ref{lem:complexity-one}, we conclude that $(S',B_{S'})$ is of cluster type. 
Then, by Lemma~\ref{lem:cluster-type-under-birational}, we deduce that $(S,B_S)$ is of cluster type.
\end{proof}

\begin{lemma}\label{lem:two-horizontal-sections}
Let $(X,B)$ be a log Calabi--Yau $3$-fold of index one and coregularity zero.
Let $\pi\colon (X,B)\rightarrow (S,B_S)$ be a crepant fibration to a surface $S$. 
Assume that $B$ has two horizontal components over $S$
and that $(S,B_S)$ is of cluster type.
Then, the pair $(X,B)$ is of cluster type. 
\end{lemma}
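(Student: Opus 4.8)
The plan is to use the cluster type structure on the base $(S,B_S)$ to reduce to the case where the base is a toric surface, and then to recognize the total space together with its boundary as a toric log Calabi--Yau pair. First I would analyze the general log fiber $(F,B_F)$ of $\pi$: restricting $K_X+B\equiv 0$ to a general fiber $F$ shows that $(F,B_F)$ is a log Calabi--Yau curve of index one, so $\deg(K_F+B_F)=0$ with $B_F\geq 0$ integral. The presence of two horizontal components forces $\deg B_F\geq 2$, hence $F\cong\pp^1$ and $B_F$ is a reduced sum of two points, each cut out by one of the horizontal components $H_1,H_2$. Thus $\pi$ is, over a dense open subset of $S$, a $\pp^1$-bundle with $H_1,H_2$ two disjoint sections; equivalently $X$ is birational over $S$ to $\pp(\oo_S\oplus\ls)$ for some line bundle $\ls$, with $H_1,H_2$ the zero and infinity sections, so that $X\setminus(H_1\cup H_2)$ is a $\mathbb{G}_m$-bundle over $S$.

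Next, since $(S,B_S)$ is of cluster type, Lemma~\ref{lem:MMP-charact-ct} yields a toric log Calabi--Yau pair $(T_S,B_{T_S})$ together with a crepant birational map $h\colon(T_S,B_{T_S})\dashrightarrow(S,B_S)$ that is an isomorphism on the open tori (in particular a birational contraction on $\mathbb{G}_m^2$). I would transport the fibration across $h$: over the open torus where $h$ is an isomorphism the two-section $\pp^1$-fibration is carried over verbatim, and extending it over all of $T_S$ by choosing a torus-invariant representative of the relevant line bundle produces a $\pp^1$-bundle $\pi_T\colon X_T\to T_S$ with two torus-invariant sections. Equipping $X_T$ with the log pullback $B_{X_T}$ gives a log Calabi--Yau pair with a crepant fibration $\pi_T\colon(X_T,B_{X_T})\to(T_S,B_{T_S})$ that is crepant birational to $(X,B)$ and agrees with it over $\mathbb{G}_m^3$.

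The key point is then a rigidity statement: $(X_T,B_{X_T})$ is automatically a toric pair. Indeed $X_T=\pp(\oo_{T_S}\oplus\ls_T)$ is toric once $\ls_T$ is chosen torus-invariant, and its boundary decomposes as $B_{X_T}=H_1+H_2+\pi_T^*D'$, where $H_1,H_2$ are torus-invariant and the vertical part is supported over $\supp B_{T_S}$. As $(T_S,B_{T_S})$ is a toric pair, $\supp B_{T_S}$ is the full torus-invariant boundary, so $\pi_T^*D'$ is torus-invariant as well. Since $(X_T,B_{X_T})$ is log Calabi--Yau of index one with $X_T$ toric and $B_{X_T}$ effective, integral and torus-invariant, comparing $B_{X_T}$ with $-K_{X_T}=\sum_i D_i$ forces $B_{X_T}$ to be the reduced toric boundary; hence $(X_T,B_{X_T})$ is a toric log Calabi--Yau pair and in particular of cluster type. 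Transporting this back through Lemma~\ref{lem:cluster-type-under-birational}, using that the crepant birational map $(X_T,B_{X_T})\dashrightarrow(X,B)$ extracts only vertical divisors lying over $\supp B_{T_S}$, which are log canonical places of $(X,B)$, we conclude that $(X,B)$ is of cluster type.

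The step I expect to be the main obstacle is the base change to the toric base: carrying the two-section $\pp^1$-fibration across the crepant birational map $h$ while simultaneously preserving the crepant log Calabi--Yau structure, realizing the resulting total space as a genuine toric $\pp^1$-bundle, and checking that the divisors involved in the transfer are log canonical places so that Lemma~\ref{lem:cluster-type-under-birational} applies. The conceptual heart of the argument, once the base has been made toric, is the observation that a crepant two-section $\pp^1$-fibration over a toric log Calabi--Yau surface has no choice but to be a toric log Calabi--Yau pair.
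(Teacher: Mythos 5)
Your overall strategy --- replace the base by its toric model and then argue that the total space, being a two-section $\pp^1$-fibration over a toric log Calabi--Yau surface, must itself be toric --- is the same strategy the paper follows. However, the step you yourself flag as ``the main obstacle'' is a genuine gap, not a routine verification, and it is exactly the step the paper outsources to a citation. Concretely: given the crepant birational map $h\colon (T_S,B_{T_S})\dashrightarrow (S,B_S)$, you need to produce a model $(X_T,B_{X_T})\rightarrow (T_S,B_{T_S})$ together with a crepant birational map $(X_T,B_{X_T})\dashrightarrow (X,B)$ such that (i) $B_{X_T}$ is effective (so that one has a genuine log Calabi--Yau pair and not merely a sub-pair), (ii) the map to $(X,B)$ extracts only log canonical places (otherwise Lemma~\ref{lem:cluster-type-under-birational} does not apply), and (iii) $X_T\rightarrow T_S$ has enough structure to run your rigidity argument. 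Your proposal asserts all of this by ``choosing a torus-invariant representative of the relevant line bundle,'' but the $\pp^1$-bundle structure with two disjoint sections exists only over a dense open subset of $S$: over special points the fibers degenerate, the two horizontal components of $B$ may meet, and $B$ may acquire vertical components, so there is no canonical way to ``extend'' the bundle over all of $T_S$ while staying crepant to $(X,B)$. The paper handles precisely this lifting problem by invoking \cite[Lemma 2.36]{Mor24a}, which produces a $\qq$-factorial model $X'$ with a Mori fiber space $\pi'\colon X'\rightarrow \pp^2$, crepant to $(X,B)$ via a map extracting only log canonical places, with ${\pi'}^{-1}(\Sigma^2)\subseteq \supp(B')$ and two disjoint horizontal components.

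The second difference is how toricity of the total space is recognized. You identify $X_T$ explicitly as $\pp(\oo_{T_S}\oplus\ls_T)$ and compare $B_{X_T}$ with $-K_{X_T}$; this requires the fibration to be an honest $\pp^1$-bundle with two disjoint torus-invariant sections everywhere, which is more than the construction gives you. The paper avoids this entirely with a counting argument: on the model $X'$ one has $\rho(X')=2$ and $B'$ has five prime components (the preimages of the three components of $\Sigma^2$ plus the two horizontal ones), so the complexity is $3+2-5=0$ and Theorem~\ref{thm:comp0} forces $(X',B')$ to be toric. If you want to salvage your argument without citing the lifting lemma, you would need to prove an analogue of it from scratch, and the complexity characterization is the cleaner way to finish once you have the model.
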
 

\begin{proof}
Assume that $(S,B_S)$ is of cluster type.
Then, there exists a crepant birational map
$(\pp^2,\Sigma^2)\dashrightarrow (S,B_S)$
that induces a birational contraction on $\mathbb{G}_m^2$.
By~\cite[Lemma 2.36]{Mor24a}, we have a commutative diagram
of crepant maps
\[
\xymatrix{
(X,B)\ar[d]_-{\pi} & (X',B')\ar@{-->}[l]_-{\phi}\ar[d]^-{\pi'} \\ 
(S,B_S) & (\pp^2,\Sigma^2)\ar@{-->}[l]_{\phi_S}
}
\]
where the following conditions are satisfied:
\begin{enumerate}
\item the variety $X'$ is $\qq$-factorial,
\item the crepant birational map $\phi$ only extracts log canonical places of $(X,B)$,
\item the crepant morphism $\pi'$ 
is a Mori fiber space,
\item the divisor $B'$ has two disjoint horizontal prime components over $\pp^2$, and
\item we have ${\pi'}^{-1}(\Sigma^2)\subseteq \supp(B')$.
\end{enumerate} 
Note that $B'$ has $5$ prime components; namely
the preimages of the prime components of $\Sigma^2$
and two horizontal components over $\pp^2$.
On the other hand, we know that $\rho(X')=2$.
Henceforth, the log Calabi--Yau pair $(X',B')$ is toric.
By Theorem~\ref{thm:comp0}
and Lemma~\ref{lem:cluster-type-under-birational},
we conclude that $(X,B)$ is of cluster type.
\end{proof}

\section{Relative cluster type}
\label{sec:rel-ct}

In this section, we introduce the concept of relative cluster type pairs and prove some preliminary lemmas.
The main statement of this section, roughly speaking, states that a log Calabi--Yau pair $(X,B)$ over a toric variety $T$ 
is of cluster type if and only if it is of relative cluster type over $T$. We will need some technical assumptions on the log canonical centers of $(X,B)$ being {\em compatible} with the toric strata of $(T,B_T)$ (see Theorem~\ref{thm:relative-cluster-type} below).

First, we give the definition of relative cluster type pair 
and state a remark and a lemma.

\begin{definition}\label{def:rel-ct}
{\em 
Let $(X,B)$ be a log Calabi--Yau pair 
and $X\rightarrow S$ be a projective contraction. 
We say that $(X,B)$ is {\em of relative cluster type over $S$}
if there exists a crepant birational contraction 
$(X,B)\dashrightarrow (T,B_T)$ over $S$ 
that only extracts log canonical places of $(X,B)$
and for which $(T,B_T)$ is a toric log Calabi--Yau pair. 
}
\end{definition}

\begin{remark}
{\em 
If we set $S={\rm Spec}(\cc)$ in Definition~\ref{def:rel-ct}, then we recover the usual definition of cluster type pair.
The previous statement follows from~\cite[Lemma 2.29]{JM24}.
On the other hand, if $(X,B)$ is of relative cluster type over $S$, then the following two conditions hold:
\begin{enumerate}
\item the variety $S$ is toric, and 
\item the pair $(S,B_S)$ induced by $(X,B)$ on $S$ via the canonical bundle formula is a toric log Calabi--Yau pair.
\end{enumerate} 
Condition (1) holds as $S$ is the image of a toric variety $T$ with respect to a projective contraction.
Condition (2) holds as the pair $(S,B_S)$ is also the pair induced by the canonical bundle formula for $(T,B_T)$ on $S$ with respect to the projective toric contraction $T\rightarrow S$.
}
\end{remark}

\begin{lemma}\label{lem:curves-on-toric}
Let $(T,B_T)$ be a projective toric log Calabi--Yau pair.
Let $C\subset T$ be a curve that is not contained in $B_T$.
Then, the curve $C$ intersects $B_T$ in at least two distinct points.
\end{lemma}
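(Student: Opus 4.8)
The plan is to reduce to the normalization of $C$ and to control the behavior of its branches at the toric boundary by means of their torus-valuation vectors. Write $N$ and $M$ for the cocharacter and character lattices of the open torus $\mathbb{G}_m^n = T\setminus B_T$, and let $\Sigma$ be the fan of $T$. Since $(T,B_T)$ is a toric log Calabi--Yau pair, $B_T$ is precisely the reduced sum of the torus-invariant prime divisors, so $T\setminus B_T$ is the dense torus. As $T$ is projective, $\Sigma$ is complete and, being a fan, all of its cones are strongly convex. Because $C$ is a closed curve in the projective variety $T$ and $C\not\subseteq B_T$, the intersection $C\cap \mathbb{G}_m^n$ is a dense open affine curve, and hence $C\cap B_T$ is a nonempty finite set. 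Arguing by contradiction, I would assume that $C\cap B_T=\{P\}$ consists of a single point.

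Next I would pass to the normalization $\nu\colon \widetilde{C}\rightarrow C$, which is a smooth projective curve, and analyze the finite set $\nu^{-1}(P)$. The point $P$ lies in a unique torus orbit $O_\tau$ associated with a cone $\tau\in\Sigma$, and $\tau\neq\{0\}$ since $P\notin\mathbb{G}_m^n$. For each point $q\in\nu^{-1}(P)$, recording the orders of vanishing at $q$ of the pullbacks of the characters $\chi^m$ (for $m\in M$) defines a valuation vector $v_q\in N$. The standard orbit--limit description on toric varieties says that the closed point $q$ maps into $O_\tau$ exactly when $v_q\in\relint(\tau)$; in particular each $v_q\in\relint(\tau)$, and since $\tau\neq\{0\}$ we obtain $v_q\neq 0$ for every $q$.

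Finally I would exploit completeness together with strong convexity. For each $m\in M$, the pullback $\nu^*\chi^m$ is a rational function on $\widetilde{C}$ whose zeros and poles are supported on $\nu^{-1}(P)$, so on the complete curve $\widetilde{C}$ its divisor has degree zero; this yields $\langle m,\sum_{q}v_q\rangle=0$ for all $m\in M$, that is $\sum_{q\in\nu^{-1}(P)} v_q=0$. On the other hand, strong convexity of $\tau$ provides a linear functional $u\in M_{\rr}$ with $\langle u,v\rangle>0$ for every $v\in\tau\setminus\{0\}$, and in particular for every $v_q$. Summing gives $0=\langle u,\sum_q v_q\rangle=\sum_q\langle u,v_q\rangle>0$, a contradiction. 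Therefore $C\cap B_T$ cannot be a single point, which proves the lemma.

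I would expect the main obstacle to be the possibility that $C$ is singular at $P$ with several branches, so that $\nu^{-1}(P)$ has more than one point; this is exactly why it is not enough to work with $C$ directly and why I pass to $\widetilde{C}$ and carry along the valuation vector of each branch. The crucial point is that all these branches limit to the same orbit $O_\tau$, forcing every $v_q$ into $\relint(\tau)$, after which strong convexity makes the relation $\sum_q v_q=0$ impossible.
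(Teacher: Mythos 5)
Your proof is correct, and it takes a genuinely different route from the paper's. The paper argues on the ambient toric variety: it passes to a small $\qq$-factorialization $T'\to T$, chooses a linear form $L$ that is strictly positive on the relative interior of the cone of the stratum containing the putative single intersection point, and uses the resulting linear equivalence $D_1\sim D_2$ of effective torus-invariant divisors without common components (with $D_1$ containing every stratum that the strict transform $C'$ could meet and $D_2$ disjoint from them) to derive the contradiction $0<D_1\cdot C'=D_2\cdot C'=0$. You instead work on the normalization $\widetilde{C}$ of the curve: the branch valuation vectors $v_q$ all land in $\relint(\tau)$ by the orbit--limit description, the degree-zero property of principal divisors on a complete smooth curve forces $\sum_q v_q=0$, and strong convexity of $\tau$ rules this out. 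The two arguments rest on the same underlying principle --- a character positive on the relevant cone cannot have a degree-zero divisor concentrated over a single boundary orbit --- but yours trades the $\qq$-factorialization and intersection theory on $T'$ for local valuation computations on $\widetilde{C}$. This makes your version independent of any $\qq$-factoriality issues and arguably more elementary; the one ingredient you should justify or cite is the fact that an arc on $T$ with generic point in the dense torus specializes into the orbit $O_\tau$ exactly when its valuation vector lies in $\relint(\tau)$ (this follows from the usual computation on the affine chart $U_\tau=\operatorname{Spec}\cc[\tau^\vee\cap M]$, and is the arc-wise strengthening of the limit criterion for one-parameter subgroups).
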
 

\begin{proof}
As $T$ is projective, the boundary divisor $B_T$ supports an ample divisor
and hence $C$ must intersect $B_T$ in at least one point. 
For the sake of contradiction, we assume that $C$ intersects $B_T$ at a single point $t_0$.
Let $\Sigma$ be the fan associated to $T$, i.e.,
$T\simeq X(\Sigma)$.
The fan $\Sigma$ is a fan of polyhedral cones in the $\qq$-vector space $N_\qq$.
Let $V$ be the largest toric stratum of $T$ containing $t_0$.
Let $\sigma_V\in \Sigma$ be the cone associated to $V$.
Let $T'\rightarrow T$ be a small $\qq$-factorialization of $T$
corresponding to the fan refinement $\Sigma'\supseteq\Sigma$.
Let $(T',B_{T'})$ be the log pull-back of $(T,B_T)$ to $T'$.
Let $\sigma_{V,1},\dots,\sigma_{V,k}$ be the cones of $\Sigma'$ whose support are contained in the support of $\sigma_V$.
Let $V_1,\dots,V_k$ be the toric strata of $T'$ corresponding to the cones $\sigma_{V,1},\dots,\sigma_{V,k}$, respectively.
Let $C'$ be the strict transform of $C$ in $T'$.
Then $C'$ can only intersect $B_{T'}$ along the strata $V_1,\dots,V_k$
and it must intersect at least one of these strata.
Let $L$ be a linear form on $N_\qq$ that is strictly positive on the relative interior of $\sigma_V$.
In particular, the linear form $L$ is positive in
the relative interior of each of the cones $\sigma_{V,1},\dots,\sigma_{V,k}$.
Then, the form $L$ induces a linear equivalence $D_1\sim D_2$ between effective torus invariant divisors on $T'$ such that $D_1$ and $D_2$ have no common components.
Since $L$ is positive on the relative interior of each $\sigma_{V,1},\dots,\sigma_{V,k}$, we conclude that $D_1$ contains the strata $V_1,\dots,V_k$ while $D_2$ is disjoint from these strata.
Since $C'$ is not contained in $B_{T'}$, we get $D_1\cdot C'>0$ and so $D_2\cdot C'>0$, leading to a contradiction.
\end{proof} 

\begin{theorem}\label{thm:relative-cluster-type}
Let $(X,B)$ be projective a log Calabi--Yau pair.
Let $\phi\colon (X,B)\rightarrow (T,B_T)$ be a crepant contraction
to a projective toric log Calabi--Yau pair. 
Assume that $\phi^{-1}B_T \subseteq \supp(\lfloor B \rfloor)$. 
Then, the following conditions are equivalent:
\begin{enumerate}
\item the pair $(X,B)$ is of cluster type, and 
\item the pair $(X,B)$ is of cluster type over $T$.
\end{enumerate} 
\end{theorem}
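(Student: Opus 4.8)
The implication $(2)\Rightarrow(1)$ should be essentially formal: a crepant birational contraction witnessing relative cluster type over $T$ is, after forgetting the structure map to $T$, a crepant birational contraction over $\Spec(\cc)$ onto a toric log Calabi--Yau pair extracting only log canonical places, and by the remark following Definition~\ref{def:rel-ct} this is exactly the statement that $(X,B)$ is of cluster type. So all the content is in $(1)\Rightarrow(2)$, and my plan is the following.

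Assuming $(X,B)$ is of cluster type, I would first invoke the same remark in the reverse direction to produce a crepant birational contraction $g\colon(X,B)\dashrightarrow(T_0,B_{T_0})$ onto a projective toric log Calabi--Yau pair that contracts only log canonical places of $(X,B)$. In particular $g$ is an isomorphism in codimension one between $X\setminus B$ and the big torus $T_0^\circ=T_0\setminus B_{T_0}$. Composing with $\phi$ gives a dominant rational map $\rho\colon T_0\dashrightarrow T$, and the crux of the argument is to show that $\rho$ is monomial up to a translation. Here is exactly where the hypothesis $\phi^{-1}B_T\subseteq\supp(\lfloor B\rfloor)$ is used: it forces $\phi(X\setminus B)\subseteq T\setminus B_T=:T^\circ$, so that $\rho$ carries the big torus $T_0^\circ$ into $T^\circ$.

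To prove monomiality, I would argue that $\rho|_{T_0^\circ}\colon T_0^\circ\to T^\circ$ is defined on a big open subset $U\subseteq T_0^\circ$, since $g$ is an isomorphism in codimension one on the tori and $\phi$ is a morphism on $X\setminus B$. Pulling back a coordinate unit of $T^\circ\cong\mathbb{G}_m^{\dim T}$ then yields a rational function on $T_0^\circ\cong\mathbb{G}_m^n$ that is invertible on $U$; as the principal divisor of any rational function is purely of codimension one, this divisor must vanish, so the function is a global unit and hence a constant multiple of a monomial. Thus $\rho|_{T_0^\circ}$ has the form $x\mapsto c\cdot x^{A}$ for some lattice homomorphism $A\colon N_0\to N_T$ and some $c\in T^\circ$, with $A$ surjective because $\phi$ is a contraction with connected general fibers. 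Factoring $\rho|_{T_0^\circ}=t_c\circ h$, where $h(x)=x^A$ is a homomorphism and $t_c$ is translation by $c$, I would subdivide the fan of $T_0$ so that $A$ becomes a map of fans onto the fan of $T$; this produces a toric birational morphism $T_1\to T_0$ extracting only toric boundary divisors, together with a toric contraction $T_1\to T$ induced by $A$.

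Finally I would take the structure morphism to be $t_c\circ(T_1\to T)$, a genuine contraction onto $T$; it is no longer torus-equivariant, but this is harmless since Definition~\ref{def:rel-ct} only requires the target pair to be toric, not the contraction itself. Equipping $T_1$ with its toric boundary $B_{T_1}$ gives a toric log Calabi--Yau pair, and the composite $(X,B)\dashrightarrow(T_1,B_{T_1})$ is then a crepant birational map over $T$ that is an isomorphism in codimension one on the tori and extracts only boundary divisors of $(T_1,B_{T_1})$, which by crepancy are log canonical places of $(X,B)$. This exhibits $(X,B)$ as being of cluster type over $T$. The main obstacle is precisely the monomiality of $\rho$: everything rests on the fact that the induced map of big tori is a homomorphism up to translation, and both the boundary hypothesis (placing the torus of $X$ over the torus of $T$) and the codimension-one rigidity of principal divisors are indispensable there; the remaining points---surjectivity of $A$, the fan subdivision, and absorbing the translation $t_c$---I expect to be routine.
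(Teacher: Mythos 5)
Your proposal is correct, but it takes a genuinely different route from the paper's. The paper proves $(1)\Rightarrow(2)$ by passing to a dlt modification $(Y,B_Y)$, taking the absolute toric model $f\colon (Y,B_Y)\dashrightarrow (T_0,B_{T_0})$ with $E_Y={\rm Ex}(f)\setminus B_Y$, and showing that every step of the $(K_Y+B_Y+\epsilon E_Y)$-MMP is vertical over $T$: for an extremal curve $C$ one restricts to a minimal log canonical center $W$ containing it, uses Lemma~\ref{lem:curves-on-toric} (a curve not contained in the boundary of a projective toric pair meets it in at least two points) to produce two points of $C\cap {\rm Nonklt}(W,B_W)$ inside a single fiber, and contradicts the connectedness principle of \cite{FS20}. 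You bypass the MMP entirely: the hypothesis $\phi^{-1}B_T\subseteq \supp(\lfloor B\rfloor)$, combined with the crepancy computation showing that divisors meeting the big torus of $T_0$ push forward to divisors of coefficient zero in $B$, places a big open subset of $T_0^\circ$ over $T^\circ$, and the rigidity of units on $\mathbb{G}_m^n$ forces $T_0\dashrightarrow T$ to be monomial up to a torus translation; a fan subdivision and the observation that the translation extends to an automorphism of $(T,B_T)$ then finish the argument. Your approach is more elementary and self-contained (no connectedness theorem, no MMP termination) and isolates exactly where the hypothesis enters. What the paper's version buys is that the relative model is the \emph{same} toric pair $(T_0,B_{T_0})$, obtained as the outcome of the absolute MMP run relatively over $T$; this stronger conclusion is what is actually reused downstream (in Theorem~\ref{introthm:relative-criteria-irreducible} and Lemma~\ref{lem:three-negative-curves-not-cluster-type}, where the number of components of $E_Y$ is bounded by a relative Picard-rank count), whereas your $T_1$ is a further toric blow-up of $T_0$ extracting additional boundary places. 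For the statement as given both arguments are complete; the only points you should make fully explicit are the coefficient computation behind the claim that a big open subset of $T_0^\circ$ lands in $X\setminus\supp(B)$, and that the refined fan can be chosen projective.
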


\begin{proof}
From~\cite[Lemma 2.29]{JM24}, it follows that $(X,B)$ is of cluster type
if it is of cluster type over $T$.
Thus, it suffices to show that $(X,B)$ is of cluster type
over $T$ provided it is of cluster type.

Assume that $(X,B)$ is of cluster type.
By~\cite[Lemma 2.29]{JM24}, we know that there exists a $\qq$-factorial dlt modifcation $(Y,B_Y)\rightarrow (X,B)$ and a crepant birational contraction
$f\colon (Y,B_Y)\dashrightarrow (T_0,B_{T_0})$ to a toric log Calabi--Yau pair $(T_0,B_{T_0})$. 
Write $\psi \colon Y\rightarrow T$ for the corresponding projective contraction.
Let $E_Y:={\rm Ex}(f)\setminus B_Y$.
We may assume $(Y,B_Y+\epsilon E_Y)$ is dlt for $\epsilon>0$ small enough.
In particular, the divisor $E_Y$ contains no strata of the dlt pair $(Y,B_Y)$.
The $(K_Y+B_Y+\epsilon E_Y)$-MMP terminates in a toric model after contracting all the components of $E_Y$ (see, e.g.,~\cite[Lemma 2.15]{JM24}).
Thus, it suffices to show that every step of this MMP is relative over $T$.
In particular, it is enough to prove that every extremal curve contracted by this MMP is vertical over $T$.

Let $\pi\colon Y\rightarrow Y_1$ be the first contraction of this MMP, i.e., $\pi$ is either a divisorial contraction or a flipping contraction.
Let $R$ be the extremal ray in the cone of curves associated to the contraction $\pi$.
By assumption, the locus ${\rm Ex}(\pi)$ contains no lcc of $(Y,B_Y)$.
Indeed, we have ${\rm Ex}(\pi)\subseteq \supp(E_Y)$
and the ray $R$ must be $E_Y$-negative.
Let $W$ be a minimal\footnote{$W$ is minimal with respect to the inclusion.} lcc of $(Y,B_Y)$ among the log canonical centers of $(Y,B_Y)$
that contains a curve contracted by $Y\rightarrow Y_1$.
Note that $W$ exists as we consider $Y$ itself as a log canonical center of the pair $(Y,B_Y)$.
Furthermore, as ${\rm Ex}(\pi)$ contains no lcc of $(Y,B_Y)$, we know that $W$ has dimension at least two.
Let $(W,B_W)$ be the log Calabi--Yau pair induced by adjunction of $(Y,B_Y)$ to $W$.
Then, the induced morphism $\pi|_{W}\colon W\rightarrow W_1$ is a birational morphism and no curve contracted by $\pi|_{W}$ is contained in $B_W$.

The curves contained in $W$ and whose numerical class is contained in $R$
are contained in the exceptional locus of $\pi|_{W}$.
Let $C$ be an irreducible curve which is contained in $W$ 
and whose numerical class is contained in the ray $R$.
By construction, we know that $C$ is not contained in $B_W$.
Assume, by the sake of contradiction, that $C_T:=\psi(C)$ is a curve.
To get a contradiction, we will use the curve $C_T$ to violate the connectedness of log canonical centers. 
In the next paragraph, we analyze the image of $W$ on $T$.

Let $V:=\psi(W)$.
We argue that $V$ is a toric stratum and is the smallest toric stratum of $(T,B_T)$, including possibly $T$, that contains $C_T$.
The fact that $V$ is a toric strata follows as $\psi(W)$ is a log canonical center\footnote{Indeed, as $\psi$ is a crepant map then it maps log canonical centers to log canonical centers.} of $(T,B_T)$ and log canonical centers of a toric log Calabi--Yau pair are precisely the toric strata. 
If $V_0\subsetneq V$ is a toric stratum containing $C_T$, then we can write 
$V_0=B_{T,1}\cap \dots \cap B_{T,k}$, where the $B_{T,i}$ are prime components of $B_T$ and at least one $B_{T,i}$ does not contain $V$.
Let us say that $B_{T,1}$ does not contain $V$.
Therefore, it would follow that $C\subseteq \psi^{-1}(B_{T,1})\cap \dots \cap \psi^{-1}(B_{T,k})\cap W \subsetneq W$.
As $\psi^{-1}(B_{T,1})$ is contained in $\lfloor B_Y\rfloor$, we conclude that $C$ must be contained in a component of $\lfloor B_Y\rfloor$ that does not contain $W$. 
Therefore, the curve $C$ is contained in a lcc of $(Y,B_Y)$ which is properly contained in $W$. This is a contradiction.
Thus, we conclude that $V$ is the smallest toric stratum of $(T,B_T)$ that contains $C_T$. Let $(V,B_V)$ be the pair obtained by adjunction of $(T,B_T)$ to $V$. Then, the pair $(V,B_V)$ is a projective toric log Calabi--Yau pair and $C_T$ is a curve contained in $V$ which is not contained in $B_V$.
By Lemma~\ref{lem:curves-on-toric}, we conclude that $C_T$ intersects $B_V$ at two distinct points $p,q\in B_V$.

Now, we turn to contradict the fact that $C_T$ is a curve by invoking the connectedness of log canonical centers.
Let $B_{T,p}$ and $B_{T,q}$ be two prime components of $B_T$
such that $p\in B_{T,p}$ and $q\in B_{T,q}$ and 
neither $B_{T,p}$ nor $B_{T,q}$ contains $V$.\footnote{We do not assume that $B_{T,p}$ and $B_{T,q}$ are distinct prime components.}
Let $B_{1,p},\dots,B_{s_p,p}$ be the prime components of $\lfloor B_Y\rfloor$
that maps to $B_{T,p}$.
Analogously, let $B_{1,q},\dots,B_{s_q,q}$ be the prime components of $\lfloor B_Y\rfloor$ that maps to $B_{T,q}$.
By assumption, we know that $\sum_{i=1}^{s_p}B_{i,p}$
and $\sum_{i=1}^{s_q}B_{i,q}$ are the reduced fibers over $B_{T,p}$ and $B_{T,q}$, respectively.

Therefore, up to reordering, we may assume that $C$ intersects $B_{1,p}$
and $B_{1,q}$ in two distinct points $p_0$ and $q_0$.
Then, by construction, we know that $p_0,q_0$ are distinct points contained in $B_W$. Indeed, we have $B_{1,p}\cap W \subset B_W$ 
and $B_{1,q}\cap W \subset B_W$.
We recall that, by the minimality of $W$, we know that $C$ is not contained in $B_W$.
We conclude that $C$ is contained in a fiber of the birational morphism $\pi|_W\colon W\rightarrow W_1$ and
$C\cap {\rm Nonklt}(W,B_W)$ consists of at least two points $p_0$ and $q_0$. 
Let $W\rightarrow W_0\rightarrow W_1$ be the Stein factorization
of the morphism $W\rightarrow W_1$.
Let $\pi_{W,0}\colon W\rightarrow W_0$ 
be the induced contraction.
Since $W_0\rightarrow W_1$ is a finite morphism,
we conclude that $C$ must get mapped to a point $w_0$ in $W_0$.
Let $F:=\pi_{W,0}^{-1}(w_0)$ be the fiber over $w_0$.
Note that 
\[
F\cap {\rm Nonklt}(W,B_W) = F\cap \lfloor B_W\rfloor 
\]
contains no curves due to the minimality assumption of $W$.
Indeed, if $F\cap \lfloor B_W\rfloor$ contains a curve,
then $F'\cap \lfloor B_W\rfloor$ contains a curve, where
$F'$ is the fiber of $W\rightarrow W_1$ over the image of $w_0$ in $W_1$.
Hence, the set $F\cap {\rm Nonklt}(W,B_W)$ is just a finite set of closed points in $F$. 
As we have 
\[
C\cap {\rm Nonklt}(W,B_W) \subseteq F \cap {\rm Nonklt}(W,B_W)
\]
we conclude that the latter is a finite set of at least two closed points in $F$ and so it is a disconnected set.
Furthermore, the pair $(W,B_W)$ is log Calabi--Yau.
This contradicts~\cite[Theorem 1.1]{FS20}. 
We conclude that $\psi(C)$ is a point. 

From the previous paragraph, we know that the curve $C$ must be vertical over $T$. This implies that every curve in the ray $R$ is vertical over $T$.
Indeed, the pull-back to $Y$ of an ample divisor on $T$ intersects $C$ trivially and hence it must intersect any curve on $R$ trivially.

We just proved that the first step $(Y,B_Y+\epsilon E_Y)\dashrightarrow (Z,B_{Z}+\epsilon E_{Z})$ of the MMP occurs over $T$.
As usual, we denote by $B_Z$ and $E_Z$ the push-forward of $B_Y$
and $E_Y$ to $Z$, respectively.
Let $\psi_Z \colon Z\rightarrow T$ be the induced projective contraction.
Note that the following conditions are satisfied:
\begin{enumerate}
\item the pair on $T$ induced by the canonical bundle formula applied to $(Y,B_Y)$ and $(Z,B_Z)$ are the same,
\item we have $\psi_Z^{-1}(B_T)\subseteq \supp(\lfloor B_Z\rfloor)$, and 
\item the pair $(Z,B_Z+\epsilon E_Z)$ is dlt.
\end{enumerate} 
The first statement holds as $(Y,B_Y)$ and $(Z,B_Z)$ are crepant birational equivalent over $T$. 
The second statement holds as the birational transformation $Y\dashrightarrow Z$ over $T$ is $(\psi^*B_T)$-trivial so every extracted curve must be contained in $\supp(\lfloor B_Z\rfloor)$.
The last statement is a consequence of the fact that $Y\dashrightarrow Z$ is a step of the $(K_Y+B_Y+\epsilon E_Y)$-MMP.

We conclude that the new dlt log Calabi--Yau pair $(Z,B_{Z})$ satisfies the same initial hypotheses as $(Y,B_Y)$.
Thus, if we proceed inductively with the other steps of the MMP, we obtain a birational contraction $(Y,B_{Y})\dashrightarrow (T_0,B_{T_0})$ over $(T,B_T)$ such that $(T_0,B_{T_0})$ is a toric log Calabi--Yau pair. 
By Definition~\ref{def:rel-ct}, we conclude that $(X,B)$ is of cluster type over $(T,B_T)$. 
\end{proof}

\section{Criteria using toric blow-ups}
\label{sec:criteria-toric-blow-up}

In this section, we give a criteria for
(possibly singular) del Pezzo fibrations over $\pp^1$ to be cluster type. 
The main goal of this section is to prove Theorem~\ref{introthm:relative-criteria-irreducible}. 
This theorem will be used to study log Calabi--Yau pairs $(\pp^3,B)$ with $B$ reduced in the next section.
First, we start with the following lemma 
regarding weighted toric blow-ups
of log rational Calabi--Yau surfaces.

\begin{lemma}\label{lem:irreducible-nodal}
Let $(T,B_T)$ be a projective toric log Calabi--Yau surface.
Let $\pi\colon Y\rightarrow T$ be a projective birational morphism only extracting canonical places of $(T,B_T)$.
Assume that the finite set $\pi({\rm Ex}(\pi))$ contains no torus invariant points of $T$ and is contained in at most two prime components of $B_T$.
Let $(Y,B_Y)\rightarrow (X,B)$ be a crepant birational contraction for which $B$ is irreducible. 
Then, there is a toric weighted blow-up $\widetilde{X}\rightarrow X$ extracting a unique log canonical place of $(X,B)$ such that the strict transform $\widetilde{B}$ of $B$ in $\widetilde{X}$ has positive self-intersection.
\end{lemma}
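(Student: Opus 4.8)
The plan is to analyze the statement carefully. We have a projective toric log Calabi--Yau surface $(T,B_T)$, a birational morphism $\pi\colon Y\to T$ extracting only canonical places, where the centers $\pi(\mathrm{Ex}(\pi))$ avoid torus-invariant points and lie in at most two prime components of $B_T$. We then have a crepant birational contraction $(Y,B_Y)\to (X,B)$ with $B$ \emph{irreducible}. We must produce a single toric weighted blow-up $\widetilde X\to X$ extracting one log canonical place of $(X,B)$ such that the strict transform $\widetilde B$ has positive self-intersection.

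**Understanding the geometry.**

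First I would track what the boundary looks like through these maps. On $T$, the boundary $B_T$ is a cycle of torus-invariant curves (the standard log Calabi--Yau boundary of a toric surface), so it has several components meeting at the torus-fixed points. The hypothesis that the centers of $\pi$ lie in at most two components of $B_T$ and avoid the fixed points is crucial: it means $\pi$ only modifies $B_T$ along smooth interior points of at most two boundary curves. Then the contraction $Y\to X$ must merge boundary components until only the single irreducible curve $B$ remains; the key numerical input is that $B$ is a log Calabi--Yau boundary on a surface $X$, hence $K_X+B\equiv 0$, and $B$ is a (rational) nodal curve since $(X,B)$ has coregularity zero. I would compute the self-intersection $B^2$ via adjunction: since $K_X+B\sim 0$, we get $B^2 = (K_X+B)\cdot B - K_X\cdot B = -K_X\cdot B = B^2$, which by adjunction on the normalization gives a relation involving the number of nodes; the essential point is that $B^2$ may be negative or non-positive before the blow-up, and the weighted blow-up is designed to fix this.

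**The main construction.**

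The heart of the proof is to choose the toric weighted blow-up correctly. I would work on the toric model $(T,B_T)$, where everything is combinatorial. A log canonical place of $(X,B)$ over a nodal point corresponds, after transporting through the crepant maps, to a ray in the fan of $T$ (or a weighted blow-up of $T$ at a torus-fixed point, i.e., a new ray in a two-dimensional cone). The self-intersection $\widetilde B^2$ changes predictably under such a toric weighted blow-up: extracting a divisor $E$ with appropriate weights $(a,b)$ at the relevant cone alters the strict transform's self-intersection, and by choosing the weights large enough (equivalently, choosing a ray deep enough in the cone) I expect to force $\widetilde B^2 > 0$. The technical statement I would lean on is the standard toric intersection computation: if $\widetilde B = \pi^* B - m E$ for the appropriate multiplicity $m$, then $\widetilde B^2 = B^2 - m^2 E^2_{\text{(normalized)}}$, and because $E^2 < 0$ on the surface, the correction term is \emph{positive}, so a suitable choice of weights makes $\widetilde B^2$ as large as desired.

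**The main obstacle.**

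The hard part will be ensuring the extracted divisor is genuinely a log canonical place of $(X,B)$ \emph{and} that a \emph{single} such blow-up suffices to achieve positive self-intersection while keeping $\widetilde B$ the strict transform of an irreducible curve. The log canonical place condition forces the weights of the toric blow-up to lie on the dual line defined by the discrepancy-zero condition $a(E; X, B)=0$; this is a one-parameter family of admissible weighted blow-ups rather than a free choice, so I cannot simply take arbitrarily large weights. I would therefore need to verify that along this constrained family the self-intersection $\widetilde B^2$ still becomes positive — this amounts to checking an explicit inequality relating $B^2$, the local toric data at the node (the two weights of the surface singularity), and the discrepancy constraint. The delicate bookkeeping is transporting the intersection number $B^2$ back to the local toric picture at the node and confirming that the node being a toric (cyclic quotient) singularity, together with $B$ being irreducible, guarantees the existence of a valid weight vector making the inequality hold. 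I expect this local toric surface computation at the nodal point to be the technical crux of the argument.
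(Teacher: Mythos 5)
There is a genuine gap, and it starts with a sign error at the heart of your ``main construction.'' For any birational morphism $\widetilde X\to X$ extracting a single divisor $E$ over a point of $B$, one has $\widetilde B=\pi^*B-mE$ with $m\ge 0$, hence $\widetilde B^2=\pi^*B\cdot\widetilde B-mE\cdot\widetilde B=B^2-m\,(E\cdot\widetilde B)\le B^2$: a (weighted) blow-up can only \emph{decrease} the self-intersection of the strict transform. Your formula $\widetilde B^2=B^2-m^2E^2$ with $E^2<0$ has the wrong sign, and the ensuing claim that one can ``force $\widetilde B^2>0$'' by choosing weights ``large enough'' is backwards --- larger weights make $\widetilde B^2$ smaller. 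The actual content of the lemma is that $B^2$ is large enough to begin with and that among the constrained one-parameter family of log canonical places over the node there is at least one whose extraction does not drop the self-intersection to $0$ or below. You correctly identify this as the crux, but you do not prove it; you only state that you expect the local toric computation to work out. Moreover, your outline never uses the hypotheses that $\pi(\mathrm{Ex}(\pi))$ avoids the torus-fixed points of $T$ and lies in at most two prime components of $B_T$, which is where the positivity actually comes from.

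The paper's proof is global rather than local. It first shows $\pi(\mathrm{Ex}(\pi))$ is nonempty and splits into two cases according to whether the two rays $\rho_1,\rho_2$ of the fan corresponding to the relevant boundary components satisfy $\rho_1=-\rho_2$. That case is excluded because it would produce a fibration $X\to\pp^1$ with two disjoint semiample pieces of $B$, contradicting irreducibility. In the remaining case one inserts a pair of rays $\pm\rho_0$ to build a toric fibration $T'\to\pp^1$ that puts all the exceptional centers in the fiber over $\infty$ and the strict transform $B'$ of $B$ in the fiber over $0$; contracting the other components of that fiber makes $B''$ a full fiber, so $(B'')^2=0$, and then contracting all but one of the remaining boundary curves of $B_{X''}\setminus B''$ --- at least two of which are horizontal and meet $B''$ --- strictly increases the self-intersection past $0$. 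The fact that the resulting $\widetilde X\to X$ is a toric weighted blow-up is then quoted from the literature, not derived from a weight computation. If you want to salvage a local approach, you would need to first establish a lower bound on $B^2$ and then exhibit an explicit admissible weight vector on the discrepancy-zero locus keeping $\widetilde B^2>0$; as written, your argument does not do either.
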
 

\begin{proof}
We note that $\pi({\rm Ex}(\pi))$ is non-empty.
Indeed, if $\pi({\rm Ex}(\pi))$ was empty, then 
$Y\rightarrow T$ would be an isomorphism and so 
$(X,B)$ would be a toric log Calabi--Yau pair.
This contradicts the fact that $B$ is irreducible.
Thus, we know that $\pi({\rm Ex}(\pi))$ is a non-empty finite set and is contained in at least one prime component of $B_T$.

Let $\Sigma$ be the fan of the projective toric surface $T$. 
Let $\rho_1,\rho_2\in \Sigma$ be the (possibly equal) rays corresponding to the prime components of $B_T$ containing $\pi({\rm Ex}(\pi))$.
We will proceed in two cases, depending on whether 
$\rho_1$ and $\rho_2$ span the same line in $\qq^2$.

First, assume that $\rho_1 = - \rho_2$.
Then, there is a toric fibration $T\rightarrow \pp^1$. 
We conclude that there is a fibration $\phi\colon X\rightarrow \pp^1$
and by construction 
$\phi^{-1}(0)\subset  \supp(B)$
and $\phi^{-1}(0)\subset \supp(B)$.
In particular, we conclude that the divisor
$B$ support two semiample divisors $B_1$ and $B_2$ with no common prime components.
Then, we conclude that the image of both $B_1$ and $B_2$ on $X$ must be divisors,
contradicting the fact that $B$ is irreducible. 

Now, assume that $\rho_1\neq - \rho_2$.
In particular, there is a linear function $L$ on $\qq^2$ which is strictly positive on the relative interior of both $\rho_1$ and $\rho_2$.
Let $\rho_0$ be the integral generator of the line in $\qq^2$ defined by $L$.
Let $\Sigma' \supset \Sigma$ be the fan refinement that introduces the rays $\rho_0$ and $-\rho_0$. 
Let $T'\rightarrow T$ be the induced toric blow-up.
As the center of the blow-up $T'\rightarrow T$ is disjoint from $\pi({\rm Ex}(\pi))$, then we get an induced blow-up $X'\rightarrow X$ making the obvious diagram commutative.
Let $(T',B_{T'})$ and $(X',B_{X'})$ be the log pull-back of $(T,B_T)$ to $T'$ and $X'$, respectively.
We have a toric fibration $T'\rightarrow \pp^1$
such that the exceptional divisors of $X'\rightarrow T'$ are mapped to the fiber over $\{\infty\}$.
Let $f'\colon X'\rightarrow \pp^1$ be the induced fibration.
The divisor $B_{X'}$ supports a semiample divisor whose image on $\pp^1$ via $f'$ is $\{0\}$.
We have a projective birational contraction $X'\rightarrow X$, 
so the strict transform of $B$
on $X'$ is contained in the fiber of $f'$ over $\{0\}$.
We denote by $B'$ the strict transform of $B$ on $X'$.
Let $X''$ be the surface obtained from $X'$ by contracting all the components of $B_{X'}$ 
contained in ${f'}^{-1}(0)$ and different from $B'$.
All these curves are contracted on $X$ so 
we have a projective birational contraction $X''\rightarrow X$.
Let $B_{X''}$ be the push-forward of $B_{X'}$ to $X''$. Let $B''$ be the strict transform of $B$ in $X''$.
Note we have a fibration $f''\colon X''\rightarrow \pp^1$ and $B''$ is a fiber of this fibration. 
Therefore, we have ${B''}^2=0$.
On the other hand, the projective birational contraction $X''\rightarrow X$
contracts at least two prime divisors of $B_{X''}$ that intersect $B''$, namely, the two horizontal components of $B_{X''}$ over $\pp^1$.
Thus, we may find a projective birational contraction $X''\rightarrow \widetilde{X}\rightarrow X$ that contracts all except one curves of $B_{X''}\setminus B''$.
If we let $\widetilde{B}$ to be the push-forward of $B''$ on $\widetilde{X}$, then we get $\widetilde{B}^2>0$. 
The projective birational morphism $\widetilde{X}\rightarrow X$ extracts a unique log canonical place of $(X,B)$ by construction.
Finally, the morphism $\widetilde{X}\rightarrow X$ is a toric weighted blow-up by~\cite[Theorem 1]{MS21}.
\end{proof} 

Now, we turn to give a criteria for possibly singular del Pezzo fibrations over $\pp^1$ to be of cluster type.

\begin{proof}[Proof of Theorem~\ref{introthm:relative-criteria-irreducible}]
First, assume that there exists a toric weighted blow-up $\widetilde{F}\rightarrow F$ extracting a unique log canonical place of $(F,B_F)$ such that the strict transform $\widetilde{B}_{F}$ of $B_F$ in $\widetilde{F}$ has positive self-intersection.
Let $C$ be the one-dimensional log canonical center of $(X,B)$ that maps generically one-to-one to $\pp^1$.
By performing a toric weighted blow-up at $C$ we obtain a projective crepant birational morphism $(\widetilde{X},B_{\widetilde{X}})\rightarrow (X,B)$
of relative Picard rank one 
such that the general log fiber of $(\widetilde{X},B_{\widetilde{X}})$ over $\pp^1$ is
$(\widetilde{F},B_{\widetilde{F}})$.
Let $E$ be the exceptional divisor of 
$\widetilde{X}\rightarrow X$.
Let $\widetilde{B}$ be the strict transform of $B$ on $\widetilde{X}$.
Note that the pair $(\widetilde{X},B_{\widetilde{X}})$ has complexity two. This follows as $\rho(\widetilde{X})=3$
and $B_{\widetilde{X}}$ has four prime components; 
$\widetilde{B}$ and $E$ that are horizontal over $\pp^1$ and two vertical components mapping to $\{0\}$ and $\{\infty\}$ in $\pp^1$.
We run the $(-E)$-MMP over $\pp^1$.
Since $\rho(\widetilde{X}/\pp^1)=2$, 
then there are two cases;
either the $(-E)$-MMP contracts a divisor 
and terminates with a Mori fiber space to $\pp^1$
or the $(-E)$-MMP consists of small modifications
followed by a Mori fiber space to a surface $S$ over $\pp^1$.
We analyze these cases in the following two paragraphs.

Firstly, assume that the $(-E)$-MMP consists of small modifications, a divisorial contraction, and terminates with a Mori fiber space to $\pp^1$.
Let $\widetilde{X}\dashrightarrow X'\rightarrow \pp^1$ be the outcome of this MMP.
Note that the small modifications over $\pp^1$ are isomorphisms on the log general fiber $(\widetilde{F},B_{\widetilde{F}})$
and the divisorial contraction 
induces a birational contraction on $\widetilde{F}$.
The birational contraction induced on $\widetilde{F}$ cannot contract $\widetilde{B}$ nor $E|_F$.
The first curve cannot be contracted as it has positive self-intersection while the second curve cannot be contracted as $E|_F^2$ is negative and we are running a $(-E)$-MMP.
Therefore, the image of both prime components $\widetilde{B}$ and $E$ on $X'$ are divisors.
Let $B_{X'}$ be the push-forward of $B_{\widetilde{X}}$ on $X'$.
Then, we conclude that $c(X',B_{X'})=1$.
Indeed, we have $\rho(X')=2$ and $B_{X'}$ has four components; two horizontal over $\pp^1$ and two vertical over $\pp^1$.
By Lemma~\ref{lem:complexity-one}, we conclude that $(X',B_{X'})$ is of cluster type.
By Lemma~\ref{lem:cluster-type-under-birational},
we deduce that $(X,B)$ is of cluster type.

Second, assume that the $(-E)$-MMP consists of small modifications and a Mori fiber space to a surface over $\pp^1$.
Let $\widetilde{X}\dashrightarrow X'\rightarrow S\rightarrow \pp^1$ be the outcome of this MMP.
Let $B_{X'}$ be the push-forward of $B_{\widetilde{X}}$ on $X'$.
Let $(S,B_S)$ be the log Calabi--Yau pair induced on $S$ by applying the canonical bundle formula to $(X',B_{X'})$.
We get an induced fibration 
$(\widetilde{F},B_{\widetilde{F}})\rightarrow \pp^1$ from the Mori fiber space
$(X',B_{X'})\rightarrow S$.
We argue that $B_{X'}$ has two prime components that are horizontal over $S$.
To show this, it suffices to argue that $B_{\widetilde{F}}$ has two components that are horizontal over $\pp^1$.
Indeed, we know that $B_{\widetilde{F}}$ has two components $\widetilde{B}_F$ and $E|_{\widetilde{F}}$.
The fibration $\widetilde{F}\rightarrow \pp^1$ is 
$E_{\widetilde{F}}$-positive so $E|_{\widetilde{F}}$ is horizontal over $\pp^1$.
On the other hand, as $\widetilde{B}_F^2>0$, the curve $\widetilde{B}_F$ is also horizontal over $\pp^1$.
We conclude that $B_{X'}$ has two prime components that are horizontal over $S$.
Henceforth, the log Calabi--Yau pair $(S,B_S)$ has coregularity zero and index one. 
Thus, from Lemma~\ref{lem:coreg-0-index-1-surface}, we conclude that $(S,B_S)$ is of cluster type
and then from Lemma~\ref{lem:two-horizontal-sections}, we conclude that $(X',B_{X'})$ is of cluster type.
Therefore, we deduce that $(X,B)$ is of cluster type
by Lemma~\ref{lem:cluster-type-under-birational}.

Now, we proceed to prove the second direction.
We assume that $(X,B)$ is of cluster type
and argue that the log general fiber $(F,B_F)$ admits the desired toric weighted blow-up.
Note that the conditions of Theorem~\ref{thm:relative-cluster-type} are satisfied, so 
the pair $(X,B)$ is of cluster type over $\pp^1$.
Hence, there exists a $\qq$-factorial dlt modification 
$(Y,B_Y)$ that admits a birational contraction 
to a toric log Calabi--Yau pair $(T,B_T)$ over $\pp^1$. Let $f\colon Y\dashrightarrow T$ be the birational contraction.
Let $E_Y:={\rm Ex}(f)\setminus B_Y$.
We can assume that $Y\dashrightarrow T$ is the outcome
of the $(K_Y+B_Y+\epsilon E_Y)$-MMP for $\epsilon>0$ small enough.
We argue that $E_Y$ has two prime components.
Since $T$ is toric and $\qq$-factorial, we know that 
\[
\rho(T/\pp^1)=|B_T|-4. 
\]
On the other hand, by assumption,  we know that 
$\rho(X/\pp^1)=1$ which can be rewritten as $\rho(X/\pp^1)=|B|-2$.
As $Y\rightarrow X$ is a $\qq$-factorial dlt modification, we conclude that 
$\rho(Y/\pp^1)=|B_Y|-2$. 
As we are assuming that $Y\dashrightarrow T$ only contracts divisorial canonical places of $(Y,B_Y)$, we conclude that $|B_Y|=|B_T|$.
We conclude that $E_Y$ consists of two prime components $E_{Y,1}$ and $E_{Y,2}$.
Let $B_{T,1}$ and $B_{T,2}$ be the (possibly equal) prime components of $B_T$ that contain the center of $E_{Y,1}$ and $E_{Y,2}$, respectively.
By replacing $T$ with a higher birational toric model, if necessary, we may assume that the center of $E_{Y,1}$ (resp. $E_{Y,2})$ contains no toric strata of $T$.

Let $(F_T,B_{F_T})$ be the log general fiber of $(T,B_T)\rightarrow \pp^1$. Note that 
$(F_T,B_{F_T})$ is a projective toric log Calabi--Yau surface.
Let $(F_Y,B_{F_Y})$ be the log general fiber of
$(Y,B_Y)$.
Then, we have a projective birational morphism
$\pi_F\colon F_Y \rightarrow F_T$ that only extract canonical places of $(F_T,B_{F_T})$.
Furthermore, we know that the finite set
$\pi_F({\rm Ex}(\pi_F))$ contains no torus invariant points of $F_T$ and is contained in at most two prime components of $B_{F_T}$; namely $B_{T,1}|_{F_T}$ and $B_{T,2}|_{F_T}$.
We have a crepant birational contraction
$(F_Y,B_{F_Y})\rightarrow (F,B_F)$
such that $B_F$ is an irreducible curve.
By Lemma~\ref{lem:irreducible-nodal}, we conclude that there exists a toric weighted blow-up
$\widetilde{F}\rightarrow F$ extracting a unique log canonical place of $(F,B_F)$ such that the strict transform of $B_F$ in $\widetilde{F}$ has positive self-intersection.
The nodal point of $B_F$ is the only
non-divisorial log canonical center of $(F,B_F)$.
Therefore, the toric weighted blow-up $\widetilde{F}\rightarrow F$ maps the exceptional curve to the nodal point of $B_F$.
\end{proof}

\section{Reducible quartic surfaces}
\label{sec:reducible}

In this section, we study coregularity zero log Calabi--Yau pairs of the form $(\pp^3,B)$ where $B$ is a reducible hypersurface.
We show that these pairs are of cluster type if and only if $B$ does not have the form $H+C$ where $H$ is a hyperplane, $C$ is a cubic hypersurface, and $H\cap C$ is a nodal cubic.

\begin{theorem}\label{thm:reducible}
Let $(\pp^3,B)$ be a log Calabi--Yau pair
of index one and coregularity zero.
Assume that $B$ is not irreducible.
Then, the pair $(\pp^3,B)$ is not of cluster type if and only if
$B=H+C$ where $H$ is a plane and $C$ is a cubic surface with no singularities through $H$.
\end{theorem}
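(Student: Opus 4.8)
The plan is to argue by the number of prime components of $B$, using the complexity. Since $(\pp^3,B)$ has index one, $B$ is a reduced quartic, so its number of prime components equals $|B|$, and
\[
c(\pp^3,B)=\dim\pp^3+\dim_\qq\Cl_\qq(\pp^3)-|B|=4-|B|.
\]
If $B$ has at least four components then $c(\pp^3,B)\le 0$, so $(\pp^3,B)$ is toric by Theorem~\ref{thm:comp0} and hence of cluster type; if $B$ has exactly three components then $c(\pp^3,B)=1$ and Lemma~\ref{lem:complexity-one} gives cluster type. Thus no reducible $B$ with at least three components can be of the exceptional form, and the whole content reduces to the two-component case, where by degree reasons either $B=Q_1+Q_2$ is a sum of two quadrics or $B=H+C$ is a plane plus a cubic. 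In both cases $c(\pp^3,B)=2$, so I would pass to a fibration over $\pp^1$.

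For two quadrics I would resolve the pencil $\{\lambda Q_1+\mu Q_2\}$ by blowing up the base curve $\Gamma=Q_1\cap Q_2$. Since $\Gamma$ is the intersection of the two boundary components, the exceptional divisor is a log canonical place, and the result is a crepant fibration of relative Picard rank one to $(\pp^1,\{0\}+\{\infty\})$ whose two boundary fibers are the strict transforms of $Q_1,Q_2$ and whose unique horizontal component is the exceptional divisor. When $\Gamma$ is an irreducible nodal curve the general log fiber is $(\pp^1\times\pp^1,\Gamma)$ with $\Gamma$ an anticanonical $(2,2)$--curve having a node, and I would apply Theorem~\ref{introthm:relative-criteria-irreducible}: blowing up the node drops the self-intersection from $\Gamma^2=8$ to $8-4=4>0$, so the required toric weighted blow-up exists and $(\pp^3,B)$ is of cluster type. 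If instead $\Gamma$ is reducible, the general log fiber acquires two horizontal components and I would run the relative MMP to reach a conic fibration over a cluster type surface, concluding by Lemma~\ref{lem:coreg-0-index-1-surface} and Lemma~\ref{lem:two-horizontal-sections}. In either subcase, two-quadric pairs are of cluster type.

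The heart of the statement is the plane-plus-cubic case. Here coregularity zero forces $D:=H\cap C$, the unique divisorial intersection, to be a plane cubic whose worst singularity is a node $p$, and this node is the unique zero-dimensional log canonical center. The dichotomy of the theorem is governed by whether $C$ is singular along $D$: at a surface node $p'\in D$ of $C$ one computes $a(E_{p'};\pp^3,B)=2-\mathrm{mult}_{p'}(H+C)=2-3=-1$, so $E_{p'}$ is an extra divisorial log canonical place. I would use such an extra place (when $C$ is singular along $D$, or when $D$ is reducible, or when $B$ carries a point of multiplicity at least three) to produce a second vertical boundary component, and after a relative MMP a del Pezzo fibration over $\pp^1$ whose general log fiber admits a toric weighted blow-up of positive self-intersection; Theorem~\ref{introthm:relative-criteria-irreducible} and Lemma~\ref{lem:cluster-type-under-birational} then yield cluster type. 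The remaining case, $D$ an irreducible nodal cubic with $C$ smooth along $D$, is exactly the conjectural non-cluster-type locus, and is where I expect the real work.

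The main obstacle is this negative direction. The difficulty is that when $C$ is smooth along the nodal cubic $D$ the node of any plane section of $C$ is \emph{rigidly} stuck at the tangency point $p$, so naive pencils never produce a coregularity-zero irreducible general fiber, and one must instead argue that no crepant birational contraction to a toric log Calabi--Yau pair can avoid contracting a divisor meeting the torus. I would proceed through Theorem~\ref{thm:relative-cluster-type}, reducing cluster type to relative cluster type over $\pp^1$, and then analyze the unique qualifying Mori fibration extracted from the log canonical place over $D$; by Lemma~\ref{lem:irreducible-nodal} and the self-intersection bookkeeping of Theorem~\ref{introthm:relative-criteria-irreducible}, the obstruction reduces to a purely local computation on the toric surface singularity at the node, showing that in this configuration \emph{every} toric weighted blow-up leaves the strict transform of the boundary with non-positive self-intersection. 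Separating this local computation cleanly from the cases where an auxiliary log canonical center is present is the delicate point on which the whole classification turns.
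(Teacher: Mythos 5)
Your reduction to the two-component case and your treatment of two quadrics are broadly in line with the paper (the paper uses a multiplicity-three point for $\geq 3$ components and an explicit relative MMP for $Q_1+Q_2$, but your complexity count and your use of Theorem~\ref{introthm:relative-criteria-irreducible} on the quadric pencil are reasonable substitutes). The plane-plus-cubic case, however, contains a genuine error and a genuine gap. First, the error: you assert that when $D=H\cap C$ is \emph{reducible} (with $C$ still smooth along $H$) an ``extra log canonical place'' makes the pair cluster type. This contradicts the very statement you are proving: if $C$ has no singularities through $H$, the theorem declares $(\pp^3,H+C)$ not of cluster type regardless of whether $H\cap C$ is an irreducible nodal cubic, a line plus a conic, or three lines, and the paper's Proposition~\ref{prop:two-components-cubic-hyperplane} proves non-cluster-type in all three configurations. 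A node of $D$ at which $C$ is smooth is a point of multiplicity $2$ of $B=H+C$, not $3$; your discrepancy computation $a(E_{p'})=-1$ only applies when $p'$ is an actual singular point of $C$, which is exactly the excluded situation. So your case division does not match the dichotomy of the theorem.

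Second, the gap: the negative direction is never actually proved. Your plan is to funnel the obstruction through Theorem~\ref{introthm:relative-criteria-irreducible} and Lemma~\ref{lem:irreducible-nodal}, i.e.\ a Mori fibration over $\pp^1$ of relative Picard rank one whose general log fiber has irreducible nodal boundary, and then a local computation at a single node. But the pencil relevant here is the one spanned by $C$ and $3H$; after extracting the necessary log canonical places, the paper arrives at a fibration of relative Picard rank \emph{three} whose general log fiber carries a \emph{cycle of three curves}, all of negative self-intersection. That situation is outside the scope of Theorem~\ref{introthm:relative-criteria-irreducible}, and the paper has to introduce two new ingredients to handle it: the combinatorial Lemma~\ref{lem:combinatorial-cycles} comparing cycles obtained by node blow-ups from $B_F$ with those obtained from toric boundaries of Hirzebruch surfaces, and Lemma~\ref{lem:three-negative-curves-not-cluster-type}, which derives the contradiction from the fact that at most two exceptional divisors can be extracted in a relative toric model. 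Your ``purely local computation on the toric surface singularity at the node'' is not the shape of the actual obstruction, and nothing in your sketch supplies a replacement for these two lemmas; this is precisely the part of the theorem where the real content lies.
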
 

First, we prove the case in which $B$ has a point of multiplicity at least three.

\begin{proposition}\label{prop:mult3}
Let $(\pp^3,B)$ be a log Calabi--Yau pair of index one and coregularity zero. 
Assume there is a point $p\in \pp^3$ for which
${\rm mult}_p(B)\geq 3$.
Then, the pair $(\pp^3,B)$ is of cluster type.
\end{proposition}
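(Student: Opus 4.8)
The plan is to blow up the high–multiplicity point, exhibit the resulting pair as a $\pp^1$–bundle over $\pp^2$ carrying exactly two horizontal boundary components, and then conclude with Lemma~\ref{lem:two-horizontal-sections} once the base surface is known to be of cluster type. First I would pin down the multiplicity. Since $(\pp^3,B)$ is log canonical, blowing up $p$ produces an exceptional divisor $E\cong\pp^2$ of discrepancy $2-\mult_p B$, so log canonicity forces $\mult_p B\le 3$; combined with the hypothesis this gives $\mult_p B=3$ and shows that $E$ is a log canonical place. Writing $\pi\colon Y=\mathrm{Bl}_p\,\pp^3\to\pp^3$ and letting $(Y,B_Y+E)$ be the log pull-back, with $B_Y$ the strict transform of $B$, this pair is crepant birational equivalent to $(\pp^3,B)$ and hence a log Calabi--Yau pair of index one and coregularity zero by Lemma~\ref{lem:cbir-pres}. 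As $\pi$ extracts only the log canonical place $E$, Lemma~\ref{lem:cluster-type-under-birational} reduces everything to proving that $(Y,B_Y+E)$ is of cluster type.

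Next I would use the projection from $p$, which resolves to a morphism $f\colon Y\to\pp^2$ that is a $\pp^1$–bundle with fibers the strict transforms of the lines through $p$. The divisor $E$ is a section of $f$, and because a general line through $p$ meets $B$ with multiplicity three at $p$ and in one further point, the strict transform $B_Y$ is horizontal of degree one over $\pp^2$. Thus $f\colon(Y,B_Y+E)\to(\pp^2,B_{\pp^2})$ is a crepant fibration whose boundary consists of exactly the two horizontal components $B_Y$ and $E$. Restricting by adjunction to the section $E\cong\pp^2$ identifies the relevant surface pair with $(\pp^2,T)$, where $T=B_Y\cap E$ is the projectivized tangent cone of $B$ at $p$ — a reduced anticanonical cubic, reducedness being forced by log canonicity at $p$. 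In particular one checks that $(\pp^2,B_{\pp^2})$ is again a log Calabi--Yau surface of index one and coregularity zero.

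It then remains to show that this base surface is of cluster type, after which Lemma~\ref{lem:two-horizontal-sections} gives that $(Y,B_Y+E)$ is of cluster type and we conclude. For the base I would locate a zero–dimensional log canonical center $q$ of $(\pp^2,T)$, which exists because the coregularity is zero, and blow it up: the pencil of lines through $q$ becomes a morphism to $\pp^1$, producing from the log pull-back a crepant fibration to $(\pp^1,\{0\}+\{\infty\})$, so that $(\pp^2,T)$ is of cluster type by Lemma~\ref{lem:coreg-0-index-1-surface} together with Lemma~\ref{lem:cluster-type-under-birational}. The main obstacle lies precisely in this last step and in the accompanying bookkeeping: I must verify that $f$ has no vertical boundary contributions — for instance when some component of a reducible $B$ is a line or plane through $p$ swept into a fiber — and confirm that the induced base boundary is reduced with a genuine zero–dimensional log canonical center, so that the surface reduction really falls within the hypotheses of Lemma~\ref{lem:coreg-0-index-1-surface}.
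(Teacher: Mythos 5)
Your argument is correct and is essentially the paper's proof: blow up the (necessarily ordinary) triple point, fibre by the strict transforms of lines through $p$ over $E\cong\pp^2$, note that the induced coregularity-zero base pair is of cluster type, and conclude with Lemma~\ref{lem:two-horizontal-sections} followed by Lemma~\ref{lem:cluster-type-under-birational}. The worries you flag at the end are harmless: the horizontal part of $B_Y$ has relative degree $4-3=1$ and hence exactly one prime component no matter what vertical pieces occur, which together with the section $E$ is all that Lemma~\ref{lem:two-horizontal-sections} requires, and the base pair inherits coregularity zero (hence a nodal, not smooth, tangent-cone cubic) because log canonical centers of the base of a crepant fibration are images of log canonical centers of the total space.
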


\begin{proof}
Since the pair $(\pp^3,B)$ is log canonical, ${\rm mult}_p(B)\leq 3$. Therefore the multiplicity is exactly $3$, and $p$ is an lc center of $(\pp^3,B)$.
Let $\varphi: X\rightarrow \pp^3$ be the blow-up at $p$. The log pull-back of $(\pp^3,B)$ in $Y$ is $B'+E$, where $B'$ is the strict transform of $B$ via $\varphi$ and $E$ is the exceptional divisor of $\varphi$.
Consider the fibration $\pi:X \rightarrow E$ whose fibers are the strict transforms of lines through $p$. The fibration $\pi:(X,B'+E)\rightarrow (E,B\cap E )\simeq (\pp^2,B_1) $ is a crepant fibration, with $(\pp^2,B_1)$ of coregularity $0$, therefore of cluster type. 
Any component of $B'$ is horizontal over $\pp^2$, and so is $E$, therefore $\pi:(X,B'+E)\rightarrow (E,B'\cap E )\simeq (\pp^2,B_1) $  is a crepant fibration satisfying the hypothesis of Lemma~\ref{lem:two-horizontal-sections}. Therefore $(X,B'+E)$ is of cluster type. Hence $(\pp^3,B)$ is of cluster type by Lemma~\ref{lem:cluster-type-under-birational}.
\end{proof}

In particular, we get the following corollary.

\begin{corollary}\label{cor:comp3}
Let $(\pp^3,B)$ be a log Calabi--Yau pair of index one.
Assume that $B$ has at least three components.
Then, the pair $(\pp^3,B)$ is of cluster type.
\end{corollary}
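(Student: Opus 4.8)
The plan is to reduce to Proposition~\ref{prop:mult3} by producing a point of multiplicity at least three on $B$. Since $(\pp^3,B)$ has index one, $B$ is a reduced member of $|\oo_{\pp^3}(4)|$, that is, a reduced quartic surface. Decomposing $B$ into its irreducible components and comparing degrees, the hypothesis that $B$ has at least three components forces the degree partition of $4$ to be either $1+1+1+1$ or $2+1+1$. In both cases $B$ contains at least two plane components $H_1,H_2$ together with a third distinct component $D$, where $D$ is a plane or an irreducible quadric.

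Next I would locate the singular point. The intersection $H_1\cap H_2$ is a line $\ell$, and I claim $\ell\not\subset D$. Indeed, if $\ell\subset D$ then $H_1$, $H_2$, and $D$ all contain $\ell$, so $\mult_\ell(B)\geq 3$; but log canonicity bounds the multiplicity of $B$ along a codimension-two center by two, giving $3\leq \mult_\ell(B)\leq 2$, a contradiction. Hence $\ell\not\subset D$, and since $D$ has positive degree we have $\ell\cdot D=\deg D\geq 1$, so $\ell$ meets $D$ in a nonempty finite set. Choosing any $p\in \ell\cap D$, the point $p$ lies on $H_1$, $H_2$, and $D$ simultaneously, whence $\mult_p(B)\geq \mult_p(H_1)+\mult_p(H_2)+\mult_p(D)\geq 3$.

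It then remains to check the hypotheses of Proposition~\ref{prop:mult3}, namely coregularity zero. As in the proof of that proposition, log canonicity of $(\pp^3,B)$ forces $\mult_p(B)\leq 3$, so in fact $\mult_p(B)=3$; blowing up $p$ produces an exceptional divisor appearing with coefficient one in the log pull-back, hence a log canonical place with zero-dimensional center $p$. Thus $(\pp^3,B)$ has coregularity zero, and Proposition~\ref{prop:mult3} applies directly to conclude that $(\pp^3,B)$ is of cluster type.

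The only genuinely delicate point is the claim $\ell\not\subset D$, which is precisely where log canonicity is used; the remainder is elementary degree bookkeeping together with the fact that a line always meets a surface of positive degree in $\pp^3$. An alternative for the last two paragraphs, bypassing Proposition~\ref{prop:mult3}, would be to observe that $|B|\geq 3$ yields complexity $c(\pp^3,B)=4-|B|\leq 1$ and then invoke Theorem~\ref{thm:comp0} when $c=0$ (so the pair is toric, hence of cluster type) or Lemma~\ref{lem:complexity-one} when $c=1$; since that route nonetheless requires establishing coregularity zero, the multiplicity argument above is the most self-contained.
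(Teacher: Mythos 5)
Your overall strategy---produce a point of multiplicity three on $B$ and invoke Proposition~\ref{prop:mult3}---is exactly how the paper intends the corollary to follow (it is stated without proof immediately after that proposition). Your existence argument for the point is correct, though the degree bookkeeping can be shortcut: any three components of $B$ are hypersurfaces in $\pp^3$, so they have a common point $p$ by the projective dimension theorem; at $p$ one has $\mult_p(B)\geq 3$, and log canonicity then forces equality, forces exactly three components through $p$, and forces each of them to be smooth at $p$.

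The one genuine gap is the final step. The inference ``$p$ is a zero-dimensional log canonical center of $(\pp^3,B)$, therefore the coregularity is zero'' is false in general: by Definition~\ref{def:coreg} the coregularity is the dimension of a minimal lc center on a \emph{dlt modification}, and that minimal stratum can be positive-dimensional even when the original pair has a point as an lc center. For instance, let $S\subset\pp^3$ be a cone over a smooth plane cubic with vertex $v$ and let $H$ be a general plane; then $(\pp^3,S+H)$ is an index-one log canonical log Calabi--Yau pair and $v$ is a zero-dimensional lc center, but on the dlt modification obtained by blowing up $v$ the minimal lc centers are elliptic curves, so the coregularity is one. To close the gap in your situation you must use the three components more seriously: the restriction of the log pull-back to the exceptional $\pp^2$ over $p$ is the projectivized tangent cone $B_1$, a union of three lines which log canonicity (via inversion of adjunction) forces to be distinct and non-concurrent, i.e.\ a triangle. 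The nodes of that triangle are zero-dimensional strata of the blow-up pair, which is log smooth near them, so the coregularity is indeed zero; equivalently, $(\pp^2,B_1)$ is visibly a coregularity-zero cluster type pair, so the proof of Proposition~\ref{prop:mult3} runs verbatim. With that substitution your argument is complete, and the complexity-based alternative you sketch would need the same correction.
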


\begin{proposition}\label{prop:two-quadrics}
Let $(\pp^3,B)$ be a log Calabi--Yau pair of index one and coregularity zero. Assume that $B=Q_1+Q_2$ two quadrics. Then $(\pp^3,B)$ is of cluster type. 
\end{proposition}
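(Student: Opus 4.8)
The plan is to exploit the pencil of quadrics spanned by $Q_1$ and $Q_2$ and to reduce to the relative criteria of Section~\ref{sec:criteria-toric-blow-up}. First, if either $Q_i$ is reducible, then $B=Q_1+Q_2$ has at least three prime components and the statement follows from Corollary~\ref{cor:comp3}; so I may assume that $Q_1$ and $Q_2$ are both irreducible. I would then consider the rational map $\pp^3\dashrightarrow \pp^1$, $x\mapsto [Q_1(x):Q_2(x)]$, whose base locus is the curve $C:=Q_1\cap Q_2$. Resolving this map produces a crepant fibration $\pi\colon (X,B_X)\rightarrow (\pp^1,\{0\}+\{\infty\})$, where $X\rightarrow \pp^3$ extracts the divisor $E$ over $C$ and $B_X=Q_1'+Q_2'+E$. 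A local computation at a general point of $C$, where $B$ has multiplicity two, shows that $E$ appears with coefficient one; hence $\pi$ is crepant, $E\subseteq \supp(B_X)$, and the special fibers $\pi^{-1}(0)=Q_1'$ and $\pi^{-1}(\infty)=Q_2'$ lie in $\supp(B_X)$. Since $X\rightarrow \pp^3$ only extracts the log canonical place $E$, Lemma~\ref{lem:cluster-type-under-birational} reduces the problem to showing that $(X,B_X)$ is of cluster type. A general fiber $F$ is the strict transform of a general, hence smooth, quadric of the pencil, so $F\cong \pp^1\times\pp^1$ and the induced boundary is $B_F=C$ regarded as a curve of class $\oo(2,2)$ on $F$; in particular $B_F^2=8$ and $B_F$ has arithmetic genus one. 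Since $(\pp^3,B)$ has coregularity zero, $C$ must be nodal.

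If $C$ is irreducible, then $B_F$ is an irreducible nodal curve. After passing to a $\qq$-factorial model and running a relative minimal model program over $\pp^1$, I would arrange that $\pi$ has relative Picard rank one while keeping the two vertical components and the general log fiber $(F,B_F)$ unchanged, so that the hypotheses of Theorem~\ref{introthm:relative-criteria-irreducible} hold. It then remains to verify condition $(2)$: the existence of a toric weighted blow-up $\widetilde{F}\rightarrow F$ extracting a single log canonical place over a node of $B_F$ with $\widetilde{B}_F^2>0$. Near a node the pair $(F,B_F)$ is analytically the toric pair $(\mathbb{A}^2,\{xy=0\})$, and every toric weighted blow-up of the origin extracts a divisor of log discrepancy zero, hence a log canonical place. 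Taking the ordinary blow-up of the node, with weights $(1,1)$, yields $\widetilde{B}_F^2=B_F^2-\mult_p(B_F)^2=8-4=4>0$. Thus condition $(2)$ holds, and Theorem~\ref{introthm:relative-criteria-irreducible} gives that $(X,B_X)$, and therefore $(\pp^3,B)$, is of cluster type.

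If $C$ is reducible, then $B_F$ splits into at least two components and $B_X$ acquires two horizontal prime divisors over $\pp^1$. Here I would invoke the classical fact that a pencil of quadric surfaces with reducible base locus has globally distinguished rulings: one ruling of the general fiber is monodromy invariant and spreads out to a conic bundle $X\rightarrow S$ over a surface $S$, yielding a crepant fibration $(X,B_X)\rightarrow (S,B_S)$ for which the two components of $E$ are horizontal. The surface pair $(S,B_S)$ is log Calabi--Yau of index one and coregularity zero; producing a crepant fibration $S\rightarrow \pp^1$ and applying Lemma~\ref{lem:coreg-0-index-1-surface} shows that $(S,B_S)$ is of cluster type. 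Lemma~\ref{lem:two-horizontal-sections} then gives that $(X,B_X)$ is of cluster type, and Lemma~\ref{lem:cluster-type-under-birational} transfers this conclusion to $(\pp^3,B)$.

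The hard part will be the bookkeeping in the degenerate configurations. Because $C$ is forced to be singular by the coregularity hypothesis, the naive blow-up $\mathrm{Bl}_C\pp^3$ is singular along the section of nodes, so some care is required to pass to a $\qq$-factorial model realizing the relative Picard rank one hypothesis of Theorem~\ref{introthm:relative-criteria-irreducible} without disturbing the general log fiber. Likewise, in the reducible case one must check that the two exceptional divisors are genuinely horizontal over $S$ and that the resulting fibration is crepant; organizing the possible decompositions of the $(2,2)$-curve $C$—two conics meeting at two nodes, a line plus a nodal cubic curve, and so on—is where most of the work lies. The computation $B_F^2=8$ is what makes the irreducible case go through cleanly, since it leaves ample room to blow up a node and still retain positive self-intersection.
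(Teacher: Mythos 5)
Your overall strategy---resolve the pencil of quadrics, fiber over $\pp^1$, and invoke the relative machinery of Sections~\ref{sec:rel-ct}--\ref{sec:criteria-toric-blow-up}---is the same as the paper's, and your irreducible branch is essentially sound: when $C=Q_1\cap Q_2$ is irreducible it has arithmetic genus one, hence exactly one node, $\mathrm{Bl}_C\pp^3$ acquires a single ordinary double point over it, and since the exceptional divisor is Cartier there (locally it is cut out by one equation of the ideal $(z,xy)$) while $H$ and $E$ generate the class group, the blow-up is already $\qq$-factorial with $\rho=2$, i.e.\ of relative Picard rank one over $\pp^1$. So the hypothesis you flag as "requiring care" holds automatically, no relative MMP needed---which is fortunate, because the MMP you propose could just as well terminate in a Mori fiber space over a surface, destroying the relative-Picard-rank-one structure you need for Theorem~\ref{introthm:relative-criteria-irreducible}. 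With that verification supplied, your computation $B_F^2=8$, $\widetilde{B}_F^2=4>0$ correctly checks condition (2), and this is a genuinely cleaner route than the paper's Case 1, which re-does by hand (blow up the node's preimage, run a $(-F)$-MMP, split into curve-base and surface-base outcomes) what the proof of Theorem~\ref{introthm:relative-criteria-irreducible} already encapsulates.

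The reducible branch, however, has a genuine gap: the "classical fact" you invoke is false. If $C$ is the union of two conics lying in planes $H_1,H_2$, the pencil contains the rank-two quadric $H_1\cup H_2$, the discriminant quartic has a double root there and (generically) two further simple roots at quadric cones, so the double cover of $\pp^1$ parametrizing rulings is branched at exactly those two points and is therefore irreducible: the monodromy interchanges the two rulings of the general fiber, and no globally defined conic bundle $X\rightarrow S$ of the kind you describe exists. Since your entire reducible case rests on producing that $S$, the argument collapses exactly in one of the main configurations. The paper avoids this by not positing the fibration a priori: it runs a $-(E_1+\dots+E_k)$-MMP over $\pp^1$ on a small $\qq$-factorialization (which here genuinely has relative Picard rank $\geq 2$, as the individual $E_i$ are not $\qq$-Cartier at the nodes) and treats both terminal outcomes---a Mori fiber space over a surface, handled by Lemmas~\ref{lem:coreg-0-index-1-surface} and~\ref{lem:two-horizontal-sections} as you intend, but also a del Pezzo fibration over $\pp^1$, handled by the complexity bound of Lemma~\ref{lem:complexity-one}. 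That second outcome, which is what actually occurs when no monodromy-invariant ruling is available, is entirely missing from your proposal, so the reducible case needs to be rebuilt along the paper's MMP dichotomy rather than patched.
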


\begin{proof} 
Let $C$ be the intersection of $B_1$ and $B_2$. We will denote by $k$ the number of components of $C$.
Let $X \rightarrow \pp^3$ be the blow-up with center $C$. 

We will separate in cases according to the number of components of $C$.\\

\textit{Case 1:} $C$ has only one component.\\

Let $C_1$ be the preimage of the node of $C$ in $X$. Indeed, by adjunction, $(B_i,C)$ is a log Calabi--Yau pair of coregularity 0. 

Let $Y \rightarrow X$ be the blow-up of $C_1$. The log pull-back of $(\pp^3,B)$ at $Y$ is $(Y,B_{1,Y}+B_{2,Y}+E_Y+F)$, where $B_{1,Y}$ and $B_{2,Y}$ are the strict transforms of $B_1$ and $B_2$, while $E_Y$ is the strict transform of the exceptional divisor of the first blow-up, and $F$ is the exceptional divisor of the second blow-up. 

Consider the fibration $\pi:Y \rightarrow \pp^1$ whose fibers are the strict transform of the pencil formed by $B_1$ and $B_2$. 

We run a $(K_Y+B_{1,Y}+B_{2,Y}+E_Y)$-MMP over $\pp^1$. Let $Y \dashrightarrow Y' \rightarrow Z \rightarrow \pp^1$ be the outcome of this MMP, and $B_1',B_2',E'$ and $F'$ the strict transforms of $B_{1,Y},B_{2,Y},E_Y$ and $F$, respectively. Note that $Y' \rightarrow Z$ is a Mori fibered space.

If $Z$ is a curve, then $Z\simeq \pp^1$ and therefore $\rho(Y')=2$. This MMP does not contract $B_1$, $B_2$, $E$ or $F$. Thus $c(Y',B_1'+B_2'+E'+F')=1$. Therefore, by Lemma~\ref{lem:complexity-one}, the pair $(Y',B_1'+B_2'+E'+F')$ is of cluster type. 

If $Z$ is a surface, the pair $(Z,B_Z)$ induced by the canonical bundle formula satisfies the hypothesis of Lemma~\ref{lem:coreg-0-index-1-surface}, thus it is a pair of cluster type. The components $E'$ and $F'$ are horizontal over $Z$. Therefore by Lemma~\ref{lem:two-horizontal-sections}, the pair $(Y',B_1'+B_2'+E'+F')$ is of cluster type.\\

\textit{Case 2:} $C$ has two or more components.\\

Let $\hat{X} \rightarrow X$ be a small $\qq$-factorialization. The log pull-back of $(\pp^3,B_1+B_2)$ to $\hat{X}$ is $(\hat{X},\hat{B_1}+\hat{B_2}+E_1+\ldots+E_k)$, where $\hat{B_i}$ are the strict transforms of $B_i$ and $E_i$ are the exceptional divisors of the blow-up. Note that $k$ is the number of nodes of $C$.
Consider the fibration $\pi:\hat{X}\rightarrow \pp^1$ whose fibers are the strict transform of the pencil formed by $B_1$ and $B_2$. 

We run a $(K_{\hat{X}}+\hat{B_1}+\hat{B_2})$-MMP over $\pp^1$. Let $\hat{X} \dashrightarrow X' \rightarrow Z \rightarrow \pp^1$ be the outcome of this MMP, and $B_1',B_2'$ and $E_i'$ the strict transforms of $\hat{B_1},\hat{B_2}$ and $E_i$, respectively. Note that $X' \rightarrow Z$ is a Mori fibered space.

If $Z$ is a curve, then $Z\simeq \pp^1$, therefore $\rho(X')=2$. This MMP does not contract $\hat{B_1},\hat{B_2}$, $E_1$, \ldots, $E_k$. Thus $c(X',B_1'+B_2'+E_1'+\ldots + E_k')\leq 1$. Therefore, by Lemma~\ref{lem:complexity-one}, the pair $(X',B_1'+B_2'+E_1'+\ldots + E_k')$ is of cluster type.  

If $Z$ is a surface, the pair $(Z,B_Z)$ induced by the canonical bundle formula satisfies the hypothesis of Lemma~\ref{lem:coreg-0-index-1-surface}, thus it is a pair of cluster type. The components $E_1, \ldots, E_k$ are horizontal over $Z$. Therefore by Lemma~\ref{lem:two-horizontal-sections}, the pair $(X',B_1'+B_2'+E_1'+\ldots + E_k')$ is of cluster type.

In either case, by Lemma~\ref{lem:cluster-type-under-birational} the pair $(X,B_1+B_2)$ is of cluster type.
\end{proof}

For the remainder cases we will need a special case, similar to that of Theorem~\ref{introthm:relative-criteria-irreducible}. We will start by proving a combinatorial lemma that serves the role of Lemma~\ref{lem:irreducible-nodal} in the case of higher relative Picard rank. 

\begin{lemma}\label{lem:combinatorial-cycles}
Let $B=C_1+C_2+C_3$ be a cycle of curves in a surface $S$, all of them with negative self-intersection. Let $\mathcal{L}_B$ be the set of all the possible cycles of curves, obtained after performing blow-ups at nodes of cycles (up to symmetry) starting with $B$. Let $\mathcal{R}$ be the set of all the possible cycles of curves, obtained after performing blow-ups at nodes of cycles (up to symmetry) starting with the toric boundary of any Hirzebruch surface. 

For any $C_L=C_{L,1}+\ldots + C_{L,k}\in \mathcal{L}_B$ and $C_R=C_{R,1}+\ldots + C_{R,k} \in \mathcal{R}$, of the same length, there are at least three pairs $C_{L,i},C_{R,i}$ with different self-intersections, i.e. $C_{L,j}^2\neq C_{R,j}^2$. 
\end{lemma}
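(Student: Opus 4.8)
The plan is to forget the surface and argue purely combinatorially about the cyclic sequence of self-intersections, exploiting two quantities that are preserved by the blow-up operation. Encode a cycle by $(c_1,\dots,c_k)$ read cyclically, where $c_i$ is the self-intersection of the $i$-th curve; blowing up a node replaces an adjacent pair $(c_i,c_{i+1})$ by $(c_i-1,-1,c_{i+1}-1)$. Set $M(c)=\begin{pmatrix}-c&-1\\1&0\end{pmatrix}$ and introduce the \emph{charge} $\operatorname{ch}=\sum_i(c_i+3)$ and the \emph{monodromy} $\prod_i M(c_i)$, the latter taken up to conjugacy (i.e.\ independent of the cyclic starting point). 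Both are blow-up invariants: the charge because $(-1)+(-1)+2=0$, and the monodromy because of the matrix identity $M(c-1)\,M(-1)\,M(c'-1)=M(c)\,M(c')$, which one checks by direct multiplication. I would then evaluate these on the two families. The toric boundary $(0,m,0,-m)$ of $\mathbb{F}_m$ has $\operatorname{ch}=12$ and monodromy $I$, so every $C_R\in\mathcal R$ has charge $12$ and monodromy $I$. Writing $B=(-a_1,-a_2,-a_3)$ with $a_i\ge 1$, one gets $\operatorname{ch}=9-\sum a_i\le 6$ for every $C_L\in\mathcal L_B$, and monodromy conjugate to $P_B=M(-a_1)M(-a_2)M(-a_3)$, a matrix with $\operatorname{tr}P_B=a_1a_2a_3-(a_1+a_2+a_3)$ and lower-right entry $-a_2$.

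Now I would argue by contradiction. Aligning the two length-$k$ cycles up to the dihedral symmetry, suppose they agree except on a set $S$ of positions with $|S|=m\le 2$, and write $c_i^L=C_{L,i}^2$, $c_i^R=C_{R,i}^2$. All $c_i^L\le -1$, hence $c_i^R\le -1$ at every agreeing position. Comparing charges gives $\sum_{i\in S}(c_i^R-c_i^L)=\operatorname{ch}_R-\operatorname{ch}_L=3+\sum a_i=:N\ge 6$; in particular $S\ne\emptyset$, which disposes of $m=0$. For the monodromy, the factors $M(c_i)$ at the agreeing positions are literally the same in both products, so cancelling them against $\prod_i M(c_i^R)=I$ expresses $P_B$ as a short word in the $\le 2$ altered factors. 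For $m=1$ this cancellation shows directly that $P_B$ is conjugate to a non-trivial unipotent of content $N$: indeed $M(c_s^R)^{-1}M(c_s^L)=\begin{pmatrix}1&0\\ c_s^L-c_s^R&1\end{pmatrix}$, whose content is $|c_s^L-c_s^R|=N$ by the charge relation.

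The content of a unipotent (the $\gcd$ of the entries of the matrix minus $I$) is a conjugacy invariant in $\mathrm{SL}_2(\mathbb Z)$, so it would force the $\gcd$ of the entries of $P_B-I$ to equal $N$. But the lower-right entry of $P_B-I$ is $-a_2-1$, so this $\gcd$ divides $a_2+1$ and hence is at most $a_2+1<3+\sum a_i=N$. This contradiction rules out $m=1$.

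The case $m=2$ is where I expect the real work to be. Cancelling the two common blocks now writes $P_B=W\,L$ with $W$ conjugate to an upper unipotent $\begin{pmatrix}1&u\\0&1\end{pmatrix}$ and $L=\begin{pmatrix}1&0\\v&1\end{pmatrix}$, the two self-intersection jumps satisfying $u-v=N\ge 6$, and a trace computation yields $\operatorname{tr}P_B=2+\alpha^2uv$ where $\alpha$ is the upper-left entry of the common block lying between the two altered positions. Since this trace equation by itself admits integer solutions, the trace is not enough, and I see two ways to close the argument. One is to retain the full conjugacy class of $P_B$ and to use the realizability constraints — that every $c_i^L\le -1$ and that $C_R$ is genuinely toric, so the common blocks are products of the matrices $M(\text{negative integer})$ that multiply to $I$ — in order to bound $\alpha$ and the content and exclude all solutions with $u-v\ge 6$. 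The alternative, which is likely the cleaner write-up, is a descending induction on $k$: find an agreeing position $i\notin S$ with $c_i=-1$ whose two neighbours in $C_L$ are $\le -2$, and contract it on both cycles; this preserves the charge, the monodromy, the all-negativity of $C_L$, the toricity of $C_R$, and the number $m$ of differing positions, while lowering $k$, so one reduces to a finite explicit list of base cases. The crux — and the main obstacle of the whole lemma — is to guarantee that such a contractible common $(-1)$-curve exists, since a priori all $(-1)$-curves of $C_R$ might sit at the two differing positions; this is where I would invoke the charge bound $N\ge 6$ together with the elementary fact, read off from the blow-up dynamics, that the total excess $\sum_i(-1-c_i^L)$ of an $\mathcal L_B$-cycle cannot be concentrated on two positions once $k\ge 5$.
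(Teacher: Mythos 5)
Your invariant-theoretic setup (the charge $\sum_i(c_i+3)$ and the $\mathrm{SL}_2(\mathbb Z)$ monodromy $\prod_i M(c_i)$, both preserved under blowing up a node) is sound and genuinely different from the paper's argument; it cleanly kills the cases of $0$ and $1$ differing positions. But the case of exactly $2$ differing positions — which is the actual content of the lemma, since the charge discrepancy $N=3+\sum a_i\geq 6$ can easily be absorbed by two positions — is not proved. You say so yourself: you offer two possible ways to close it, and for the one you call "the cleaner write-up" you identify as "the crux" the existence of a common contractible $(-1)$-curve, which you do not establish. The first alternative (bounding $\alpha$ and the content of the unipotent factors using realizability of the common blocks) is likewise only a plan. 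As it stands the proposal has a genuine gap precisely at the heart of the lemma.

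For comparison, the paper closes this case with an elementary counting argument that sidesteps the monodromy entirely. Take a putative pair $(C_L,C_R)$ differing in at most two positions, of minimal length. If they share a $(-1)$-curve at an agreeing position, contract it on both sides and contradict minimality (or land on a $2$-cycle, which cannot occur in $\mathcal R$). Otherwise, every $(-1)$-curve of $C_R$ sits at a differing position, and — this is the observation you are missing — so does every curve of $C_R$ with \emph{non-negative} self-intersection, because all curves of $C_L$ are negative. Hence $\#\{(-1)\text{-curves of }C_R\}+\#\{c_i^R\geq 0\}\leq 2$, and one checks this is impossible: with no $(-1)$-curves $C_R$ is an unblown-up Hirzebruch boundary with at least three non-negative components; with exactly one $(-1)$-curve the blow-ups are all infinitely near and at least two of the original non-negative components survive untouched; with two $(-1)$-curves $C_L$ has none, forcing $C_L=B$ of length $3$, which $\mathcal R$ cannot match. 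If you want to salvage your approach, the statement you need is exactly this lower bound on the number of non-negative curves of a cycle in $\mathcal R$ in terms of its number of $(-1)$-curves; the charge and monodromy alone will not substitute for it.
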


\begin{proof}
First of all, since blow-ups decrease the self-intersections of curves, any curve in a cycle in $\mathcal{L}_B$ must have negative self-intersection.

Assume by contradiction that there exist $C_L \in \mathcal{L}_B$ and $C_R \in \mathcal{R}$ having at most two curves with different self-intersections. We can choose them to be of minimum length. If there was a $(-1)$-curve in common, then by contracting it we obtain either cycles of lower length belonging to $ \mathcal{L}_B$ and $\mathcal{R}$, respectively or cycles formed by two curves. The first case would contradict minimality, while the second case would imply that there is a toric boundary on a surface with only two curves, a contradiction.

We study the possible number of $(-1)$-curves in $C_R$. By the previous parragraph, any of these $(-1)$-curves gives one pair of curves in $C_L$ and $C_R$ with different self-intersection, hence there must be at most $2$ $(-1)$-curves in $C_R$.\\

\noindent\textit{Case 1:} There are exactly $2$ $(-1)$-curves in $C_R$.\\

Then, $C_L$ cannot have any $(-1)$-curve, so it must be the cycle $B$ formed of three curves, and $C_R$ has to be three curves with negative self-intersection, a contradiction.\\

\noindent\textit{Case 2:} There is exactly $1$ $(-1)$-curve in $C_R$.\\

This means that all the blow-ups performed to obtain $C_R$ are at points infinitely nearby in the minimal model, therefore $C_R$ must have at least $2$ curves with non-negative self-intersection . Since all curves in $C_L$ have negative self-intersection, there are at least three curves with different self-intersection in $C_L$ and $C_R$, namely, in $C_R$ the two curves with non-negative self-intersection and the one with self-intersection $-1$. A contradiction.\\

\noindent\textit{Case 3:} There is no $(-1)$-curve in $C_R$.\\

Therefore $C_R$ is a toric boundary, then three curves of which have non-negative self-intersections. Therefore there are three curves, giving pairs with different self-intersections in $C_L$ and $C_R$, a contradiction.

As in any case we obtain a contradiction, we achieve the desired result

\end{proof}

\begin{lemma}\label{lem:three-negative-curves-not-cluster-type}
Let $(X,B)$ be a $\qq$-factorial log Calabi--Yau $3$-fold of index one and coregularity zero.
Let $\pi\colon (X,B)\rightarrow (\pp^1,\{0\}+\{\infty\})$ be a crepant fibration of relative Picard rank three.
Assume that $\pi^{-1}(0)\cup \pi^{-1}(\infty)\subseteq \supp(B)$.
Let $(F,B_F)$ be a general log fiber of $\pi$
and assume that $B_F$ consists of a cycle of three curves. 

If all components of $B_F$ have negative self intersection, then $(X,B)$ is not of cluster type.
\end{lemma}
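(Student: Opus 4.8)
The plan is to prove the contrapositive: assuming that $(X,B)$ is of cluster type, I will derive a contradiction with the combinatorial input of Lemma~\ref{lem:combinatorial-cycles}. First I would check that the hypotheses of Theorem~\ref{thm:relative-cluster-type} are met: the pair is log Calabi--Yau of index one and coregularity zero, and $\pi^{-1}(0)\cup\pi^{-1}(\infty)\subseteq\supp(B)$ ensures $\pi^{-1}B_{\pp^1}\subseteq\supp(\lfloor B\rfloor)$ where $(\pp^1,B_{\pp^1})=(\pp^1,\{0\}+\{\infty\})$ is the induced toric pair. Hence $(X,B)$ being of cluster type forces it to be of cluster type over $\pp^1$. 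This produces a $\qq$-factorial dlt modification $(Y,B_Y)$ together with a crepant birational contraction $f\colon(Y,B_Y)\dashrightarrow(T,B_T)$ over $\pp^1$ onto a toric log Calabi--Yau pair, realized as the output of a $(K_Y+B_Y+\epsilon E_Y)$-MMP over $\pp^1$ contracting $E_Y:=\mathrm{Ex}(f)\setminus B_Y$.

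Next I would pass to the log general fibers. Restricting over the generic point of $\pp^1$, the dlt modification and the birational contraction induce, on the general fiber, a birational morphism $\pi_F\colon F_Y\rightarrow F_T$ and a crepant birational contraction $(F_Y,B_{F_Y})\rightarrow(F,B_F)$, where $(F_T,B_{F_T})$ is a projective toric log Calabi--Yau surface. Because $T$ is $\qq$-factorial toric and the fibration has relative Picard rank three, the toric boundary $B_{F_T}$ is a cycle of curves and $F_T$ is obtained from a Hirzebruch surface by toric blow-ups at nodes of cycles; thus its boundary cycle lies in the set $\mathcal{R}$ of Lemma~\ref{lem:combinatorial-cycles}. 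On the other side, $B_F$ is a cycle of three curves all of negative self-intersection by hypothesis, and performing the crepant extractions needed to reach $(F_Y,B_{F_Y})$ from $(F,B_F)$ only blows up nodes of the cycle, so the boundary cycle of $(F_Y,B_{F_Y})$ lies in $\mathcal{L}_B$. The key point is that $(F_Y,B_{F_Y})$ and $(F_T,B_{F_T})$ are crepant birational over the generic point and both are obtained from the respective starting cycles by contracting exactly the divisorial part of $E_Y$ restricted to the fiber, so after matching components they yield two cycles $C_L\in\mathcal{L}_B$ and $C_R\in\mathcal{R}$ of the same length that are crepant birational; this forces the self-intersections of corresponding curves to agree (since crepant birational maps between these minimal cycles that contract nothing on the fiber are isomorphisms respecting the cycle structure).

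Having produced such a matching pair of equal length with equal self-intersections along all but possibly the extracted curves, I would invoke Lemma~\ref{lem:combinatorial-cycles}, which asserts that any $C_L\in\mathcal{L}_B$ and $C_R\in\mathcal{R}$ of the same length must differ in at least three self-intersections. This is the contradiction, since crepant birational identification forces all (or all but very few) self-intersections to coincide. Therefore $(X,B)$ cannot be of cluster type.

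I expect the main obstacle to be the bookkeeping in the second paragraph: precisely matching the prime components of the two boundary cycles and verifying that the crepant birational contraction $(F_Y,B_{F_Y})\dashrightarrow(F,B_F)$ together with $f$ restricted to fibers identifies $C_L$ and $C_R$ as cycles \emph{of the same length with matching self-intersections}, so that Lemma~\ref{lem:combinatorial-cycles} applies cleanly. One must confirm that the relative Picard rank three hypothesis guarantees the cycle has exactly three boundary components surviving, that the $E_Y$-contraction acts compatibly on fibers, and that the self-intersection comparison is genuinely between corresponding curves. Once the dictionary between the three-dimensional MMP-over-$\pp^1$ picture and the two-dimensional cycle combinatorics is set up correctly, the contradiction with Lemma~\ref{lem:combinatorial-cycles} is immediate.
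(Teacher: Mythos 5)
Your overall strategy is exactly the paper's: assume cluster type, invoke Theorem~\ref{thm:relative-cluster-type} to get cluster type over $\pp^1$, restrict the resulting contraction $(Y,B_Y)\dashrightarrow (T,B_T)$ to general fibers, and play the cycle $B_{F_Y}\in\mathcal{L}_{B_F}$ off against the toric cycle $B_{F_T}\in\mathcal{R}$ via Lemma~\ref{lem:combinatorial-cycles}. However, there is a genuine gap at the quantitative heart of the argument: Lemma~\ref{lem:combinatorial-cycles} yields a contradiction only if you can show that the two cycles differ in self-intersection at \emph{at most two} components, and you never establish this. You write that the self-intersections agree ``along all but possibly the extracted curves'' and ``all but very few,'' and you explicitly defer the point to ``bookkeeping,'' but this is precisely the step that must be proved. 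The number of components at which $B_{F_Y}$ and $B_{F_T}$ can differ is controlled by the number of prime components of $E_Y={\rm Ex}(f)\setminus B_Y$ (each blow-up in $\pi_F\colon F_Y\rightarrow F_T$ is centered at a smooth, non-torus-invariant point of $B_{F_T}$ and lowers the self-intersection of the component containing it), and a priori nothing prevents $E_Y$ from having three or more components, in which case no contradiction with the ``at least three differ'' conclusion of Lemma~\ref{lem:combinatorial-cycles} arises.

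The paper closes this gap with a Picard-rank count that your proposal omits: since $T$ is $\qq$-factorial toric, $\rho(T/\pp^1)=|B_T|-4$; since $\rho(X/\pp^1)=3$ and $|B|\geq 5$ one has $\rho(X/\pp^1)\leq |B|-2$, hence $\rho(Y/\pp^1)\leq |B_Y|-2$ for the dlt modification; and $|B_Y|=|B_T|$ because $f$ contracts only components of $E_Y$. Subtracting gives $\rho(Y/T)\leq 2$, so $E_Y$ has at most two prime components and the affected boundary components of $B_{F_T}$ number at most two. Only with this bound in hand does Lemma~\ref{lem:combinatorial-cycles} produce the contradiction. A secondary imprecision: your parenthetical claim that the crepant birational identification ``forces the self-intersections of corresponding curves to agree'' because the maps are ``isomorphisms respecting the cycle structure'' is not correct as stated --- $\pi_F$ is not an isomorphism, and the self-intersections of the (at most two) components of $B_{F_T}$ meeting $\pi_F({\rm Ex}(\pi_F))$ genuinely drop; the correct statement is that all but at most two agree, which is again why the component count for $E_Y$ is indispensable.
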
 

\begin{proof}

Suppose by contradiction that $(X,B)$ is of cluster type. Note that the conditions of Theorem~\ref{thm:relative-cluster-type} are satisfied, so 
the pair $(X,B)$ is of cluster type over $\pp^1$.
Hence, there exists a $\qq$-factorial dlt modification 
$(Y,B_Y)$ that admits a birational contraction 
to a toric log Calabi--Yau pair $(T,B_T)$ over $\pp^1$. Let $f\colon Y\dashrightarrow T$ be the birational contraction.
Let $E_Y:={\rm Ex}(f)\setminus B_Y$.
By Lemma \ref{lem:MMP-charact-ct}, we can assume that $Y\dashrightarrow T$ is the outcome
of the $(K_Y+B_Y+\epsilon E_Y)$-MMP for $\epsilon>0$ small enough.
We will prove first that $E_Y$ has at most two prime components. Notice that $|B|\geq 5$.
Since $T$ is toric and $\qq$-factorial, we know that 
\[
\rho(T/\pp^1)=|B_T|-4. 
\]

On the other hand, by assumption,  we know that 
$\rho(X/\pp^1)=3$ which can be rewritten as $\rho(X/\pp^1)\leq |B|-2$.
As $Y\rightarrow X$ is a $\qq$-factorial dlt modification, we conclude that 
$\rho(Y/\pp^1) \leq |B_Y|-2$. 
As we are assuming that $Y\dashrightarrow T$ only contracts divisorial canonical places of $(Y,B_Y)$, we conclude that $|B_Y|=|B_T|$.
We conclude that $E_Y$ consists of at most two prime components $E_{Y,1}$ and $E_{Y,2}$.
Let $B_{T,1}$ and $B_{T,2}$ be the (possibly equal) prime components of $B_T$ that contain the center of $E_{Y,1}$ and $E_{Y,2}$, respectively.
By replacing $T$ with a higher birational toric model, if necessary, we may assume that the center of $E_{Y,1}$ (resp. $E_{Y,2})$ contains no toric strata of $T$. 

Let $(F_T,B_{F_T})$ and $(F_Y,B_{F_Y})$ be the log general fibers of $(T,B_T)\rightarrow \pp^1$ and $(Y,B_Y)\rightarrow \pp^1$, respectively. Then, we have a projective birational morphism $\pi_F:F_Y \rightarrow F_T$ that only extracts canonical places of $(F_T,B_{F_T})$. Furthermore, we know that the finite set $\pi_F({\rm Ex}(\pi_F))$ contains no torus invariant points of $F_T$ and is contained in at most two prime components of $B_{F_T}$. We have a crepant birational contraction $(F_Y,B_{F_Y})\rightarrow (F,B_F)$ such that $B_F$ is a cycle of three curves, each with negative self-intersection. Notice that $B_{F_T}$ and its preimage in $F_Y$ are cycles obtained by the process described in Lemma~\ref{lem:combinatorial-cycles} with self-intersections different at only two divisors, a contradiction.
\end{proof}

\begin{proposition}\label{prop:two-components-cubic-hyperplane}
Let $(\pp^3,B)$ be a log Calabi--Yau pair of index one and coregularity zero. Assume that $B$ consists of a plane $H$ and a cubic surface $C$ smooth along $H$. Then $(\pp^3,B)$ is not of cluster type.
\end{proposition}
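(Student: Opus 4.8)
The plan is to show that when $B=H+C$ with $H$ a plane and $C$ a cubic surface smooth along the nodal plane cubic $H\cap C$, the resulting configuration falls under the non-cluster-type criterion of Lemma~\ref{lem:three-negative-curves-not-cluster-type}. First I would construct a crepant fibration over $\pp^1$ using the pencil. The natural pencil to use is the one generated by $H+C$ and a suitable multiple of the hyperplane, or more precisely a pencil of cubics through the curve $H\cap C$; however, the cleanest approach is to produce a fibration whose general fiber has as boundary a cycle of three curves. Concretely, I would blow up $\pp^3$ along the curve $D:=H\cap C$ and along the singularities of the pair, obtaining a birational model $(X,B_X)$ together with a fibration $\pi\colon X\rightarrow \pp^1$ coming from the linear system of planes through an appropriate line, or from the pencil of cubic surfaces containing $D$.

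The key geometric input is that $D=H\cap C$ is a nodal plane cubic, hence a log Calabi--Yau curve of coregularity zero with exactly one node; after passing to a dlt model the boundary on a general fiber surface should become a cycle of three rational curves. The heart of the argument is to verify that these three curves all have \emph{negative} self-intersection. I would set up the fibration so that the relative Picard rank over $\pp^1$ equals three, matching the hypothesis of Lemma~\ref{lem:three-negative-curves-not-cluster-type}. The three boundary components of the general log fiber $(F,B_F)$ come from: the two horizontal divisors arising as strict transforms of $H$ and $C$ (restricted to the fiber), together with the exceptional divisor over the node of $D$. The log Calabi--Yau condition forces $B_F$ to be an anticanonical cycle on the del Pezzo surface $F$, and I would compute each self-intersection number using adjunction on the fiber together with the fact that $C$ is smooth along $H$.

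The main obstacle I anticipate is the self-intersection computation: I must show that \emph{all three} curves in the cycle $B_F$ are negative, since Lemma~\ref{lem:three-negative-curves-not-cluster-type} applies only in that case. This is exactly where the hypothesis that $C$ is smooth along $H$ (equivalently, that $H\cap C$ is a nodal cubic and $C$ has no singular point on $H$) enters: it controls the local structure of the fibration near the node and prevents any of the boundary curves from acquiring non-negative self-intersection after the relevant blow-ups. I would compute $B_F^2$-type quantities by intersecting the strict transforms against a general fiber and using the projection formula, keeping careful track of how the blow-up of the node of the plane cubic contributes. If any of the three components had non-negative self-intersection, the pair would in fact be of cluster type by Theorem~\ref{introthm:relative-criteria-irreducible}-type reasoning, so the content is precisely the rigidity coming from the smoothness of $C$ along $H$.

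Once the fibration $\pi\colon (X,B_X)\rightarrow(\pp^1,\{0\}+\{\infty\})$ is shown to have relative Picard rank three, to satisfy $\pi^{-1}(0)\cup\pi^{-1}(\infty)\subseteq\supp(B_X)$, and to have general log fiber a cycle of three negative curves, Lemma~\ref{lem:three-negative-curves-not-cluster-type} gives that $(X,B_X)$ is not of cluster type. Since the birational model $(X,B_X)$ is obtained from $(\pp^3,B)$ by extracting only log canonical places, I would conclude by Lemma~\ref{lem:cluster-type-under-birational} (applied in the contrapositive direction, noting that cluster type is preserved under such crepant modifications in both directions for the relevant morphisms) that $(\pp^3,B)$ is not of cluster type, completing the proof.
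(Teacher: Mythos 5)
Your high-level strategy coincides with the paper's: resolve the pencil generated by $C$ and $3H$ to get a crepant fibration over $\pp^1$ of relative Picard rank three whose general log fiber is a cycle of three curves, show all three have negative self-intersection, apply Lemma~\ref{lem:three-negative-curves-not-cluster-type}, and descend to $(\pp^3,B)$ via Lemma~\ref{lem:cluster-type-under-birational}. However, there are genuine gaps in the execution. First, and most importantly, the self-intersection computation --- which you correctly identify as ``the main obstacle'' --- is the entire content of the proposition, and you do not carry it out; you only assert that smoothness of $C$ along $H$ will force negativity. The paper spends the bulk of the proof on exactly this: a three-step sequence of blow-ups along $\Gamma=H\cap C$ and its strict transforms (interleaved with small $\qq$-factorializations) to make the pencil base-point free, followed by contractions and further blow-ups at nodes to reach relative Picard rank three, and then a case-by-case computation producing self-intersections $(-1,-1,-1)$, $(-2,-1,-1)$, and $(-2,-2,-1)$ according to whether $\Gamma$ is a triangle of lines, a conic plus a line, or an irreducible nodal cubic. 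Without this computation the argument is not complete, since if even one component had non-negative self-intersection the conclusion would fail (indeed the pair would then likely be of cluster type).

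Second, your identification of the three cycle components is incorrect for the pencil you propose: if the fibration comes from the pencil $\langle C, 3H\rangle$, then $H$ and $C$ are the fibers over $0$ and $\infty$, hence \emph{vertical}, and their ``restrictions to a general fiber'' are not among the horizontal boundary components. The actual cycle on a general fiber consists of the strict transform of $\Gamma$ inside the general cubic surface of the pencil (which starts with self-intersection $3$ there) together with exceptional curves over its node(s); computing with the wrong curves would derail the verification of negativity. Third, you assume $H\cap C$ is an irreducible cubic ``with exactly one node,'' but the hypothesis only gives that $C$ is smooth along $H$ and the pair has coregularity zero, so $\Gamma$ may also be a conic plus a secant line or a triangle of lines; these cases must be treated (the paper does all three). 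Your remark that Lemma~\ref{lem:cluster-type-under-birational} must be used in the direction ``cluster type descends along extractions of lc places in both directions'' is a fair point and matches the paper's implicit usage, so that step is fine.
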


\begin{proof}
Let the intersection of $C$ and $H$ be called $\Gamma$. Let $m$ be the number of components of $\Gamma$. Since $(\pp^3,B)$ is log canonical of coregularity $0$, $\Gamma$ has $m$ nodal singularities.

Let $X\rightarrow \pp^3$ be the blow-up with center $\Gamma$. The log pull-back of $(\pp^3,B)$ to $X$ is $(X,H_X+C_X+E)$, where $H_X$ and $C_X$ are the strict transforms of $H$ and $C$, while $E=E_1+\ldots+E_m$ are the exceptional divisors. Since $\Gamma$ has $m$ nodal singularities, $X$ has $m$ nodal singular points inside $E$. Let $i(X)$ be the number of these singularities intersecting $C_X$.

Let $X'$ be a small $\qq$-factorialization of $X$. We have $\rho(X')=1+m$. By pulling-back $3H\sim C$, we obtain that $3H_{X'}+3E_{X'}\sim C_{X'}+E_{X'}$, therefore $C_{X'}\sim 3H_{X'}+2E_{X'}$. The intersection of $E_{X'}$ and $C_{X'}$ consists of $m+i(X)$ lines, with $m+i(X)$ nodes. For notation purposes, we define $m(Y)=m+i(X)$.

Let $Y\rightarrow X'$ be the blow-up with center $C_{X'}\cap E_{X'}$. The log pull-back of $(\pp^3,B)$ to $Y$ is $(Y,B_Y+C_Y+E_Y+F)$, where $F=F_{1}+\ldots+F_{m(Y)}$ are the exceptional divisors of $Y\rightarrow X'$. Since $C_{X'}\cap E_{X'}$ has $m(Y)$ nodes, $Y$ has $m(Y)$ nodal singularities, all of them in $F$. Let $i(Y)$ be the number of these singularities intersecting $C_Y$.

Let $Y'$ be a small $\qq$-factorialization of $Y$. We have that $\rho(Y')=\rho(X')+m(Y)=1+m+m(Y)$. By pulling-back $3H\sim C$, we obtain that $C_{Y'}\sim 3H_{Y'}+2E_{Y'}+F_{Y'}$. The intersection  $F_{Y'}\cap C_{Y'}$ consists of $m(Y)+i(Y)=m+i(X)+i(Y)$ many lines and the same amount of nodes.  For notation purposes, we define $m(Z)=m(Y)+i(Y)$.

Let $Z\rightarrow Y'$ be the blow-up with center $C_{Y'}\cap F_{Y'}$. The log pull-back of $(\pp^3,B)$ to $Z$ is $(Z,H_Z+C_Z+E_Z+F_Z+G)$, where $G=G_1+\ldots+G_{m(Z)}$ are the exceptional divisors of $Z \rightarrow Y'$.

Let $Z'$ be a small $\qq$-factorialization of $Z$. We have that $\rho(Z')=\rho(Y')+m(Z)=1+m+m(Y)+m(Z)$. By pulling-back $3H\sim C$, we obtain $C_{Z}\sim 3H_{Z'}+2E_{Z'}+F_{Z'}$. These are two disjoint sections of a linear system, thus we obtain a morphism to $\pp^1$, whose fibers are the strict transforms of the pencil formed by $3H$ and $C$. The general fiber is a cubic surface blown up at $i(X)+i(Y)$ points.

The fiber containing $H$ has other $m+m(Y)$ components that can be contracted. The divisors 
\[
G_{1,Z'},\ldots, G_{m(Z),Z'}
\]
are horizontal in this morphism, and cut divisors in the general fiber, $i(X)+i(Y)$ of them correspond to the exceptional divisors of the blow-ups performed to each of the cubic surfaces in the pencil in $\pp^3$. Therefore these $i(X)+i(Y)$ divisors get contracted, and $m(Z)-i(X)-i(Y)=m$ of these horizontal divisors remain. We end up with a morphism $(Z_1,H_{Z_1}+C_{Z_1}+G_{1,1}+\ldots+G_{1,m})\rightarrow (\pp^1, \{0\}+\{\infty\})$ of relative Picard rank $m$. We can blow-up $m-3$ times with center the horizontal sections formed by the nodes of $G_{1,1}+\ldots+G_{1,m}$ or its pullback.

We end up with a morphism $(Z_2,H_{Z_2}+C_{Z_2}+G'_1+G'_2+G'_3)\rightarrow (\pp^1, \{0\}+\{\infty\})$ of relative Picard rank $3$. Let $(F,B_F)$ be the general log fiber, we obtain that $B_F$ is formed by three curves. Now we compute their self-intersections in $F$, depending on $m$ the number of components of $\Gamma$.\\

\noindent\textit{Case 1:} If $\Gamma$ consists of three lines.\\

The self-intersection of each of these lines is $-1$ in a general cubic surface in the pencil formed by $H$ and $C$. In each of these cubic surfaces, the blown-up points were blown down in the opposite order, thus the self-intersections remain the same at the end.\\

\noindent\textit{Case 2:} If $\Gamma$ consists of a line and a conic.\\

The self-intersection of the line is $-1$, while the self-intersection of the conic is $0$ in a general cubic surface in the pencil formed by $H$ and $C$. In each of these cubic surfaces, the blown-up points were blown downn in the opposite order, thus the self-intersections remain the same, before the last blow-up. After the last blow-up, the self-intersections drop and we obtain a cycle of three curves with self-intersections $-2$, $-1$ and $-1$.\\

\noindent\textit{Case 3:} If $\Gamma$ consists of a nodal cubic.\\

The self-intersection of the nodal cubic is $3$ in a general cubic surface in the pencil formed by $H$ and $C$. In each of these cubic surfaces the blown-up points were blown down in the opposite order, thus the self self-intersections remain the same, before the last 2 blow-ups. After the second to last blow-up the self self-intersections drop and we obtain a cycle of two curves with self-intersections $-1$ and $-1$. After the last blow-up, the three curves have self-intersection $-2$, $-2$ and $-1$.

In any case all the curves in $B_F$ have negative self-intersections. By Lemma~\ref{lem:three-negative-curves-not-cluster-type} the pair $(Z_2,H_{Z_2}+C_{Z_2}+G'_1+G'_2+G'_3)$ is not of cluster type. Since all the morphism performed extract only log canonical places, Lemma~\ref{lem:cluster-type-under-birational} implies that $(\pp^3,B)$ is not of cluster type.

\end{proof}

\begin{proof}[Proof of Theorem~\ref{thm:reducible}]
By Corollary~\ref{cor:comp3}, if $B$ has $3$ or more components then $(\pp^3,B)$ is of cluster type.
In the case of exactly two components, there are two possibilities; two quadrics or a hyperplane and a cubic. In the first case, the pair $(\pp^3,B)$ is of cluster type by Proposition~\ref{prop:two-quadrics}. In the other case, Proposition~\ref{prop:two-components-cubic-hyperplane} implies that the pair $(\pp^3,B)$ is not of cluster type when the cubic surface is smooth along the plane, otherwise there will be some point of multiplicity $3$ in the intersection of the plane and the cubic surface, thus we can conclude by Proposition~\ref{prop:mult3}
\end{proof}

\section{Irreducible quartic hypersurfaces}
\label{sec:irred}

In this section, we study irreducible non-normal quartic hypersurfaces
$B\subset \pp^3$ of coregularity zero.
The non-normal assumption implies that 
$B$ has nodal singularities along a curve in $\pp^3$ (see, e.g.,~\cite[Proposition 3.1]{Duc24}). 
There are four cases depending on the locus of nodal points of $B$:
\begin{enumerate}
\item[(i)] the nodal locus of $B$ is the union of three concurrent lines in $\pp^3$; 
\item[(ii)] the nodal locus of $B$ is a twisted cubic in $\pp^3$;
\item[(iii)] the nodal locus of $B$ is a plane conic in $\pp^3$;
or
\item[(iv)] the nodal locus of $B$ is a line in $\pp^3$.
\end{enumerate} 
The previous follows from the classification of non-normal
quartic surfaces.
This classification dates back to Jessop~\cite{Jes16}.
We refer to Urabe's work for a more modern treatment~\cite{Ura86}.
Note that the nodal locus of $B$ agrees with the union
of the non-divisorial log canonical centers of the pair $(\pp^3,B)$. In each case, our argument to show that $(\pp^3,B)$ is of cluster type will use different features of the geometry of $B$. 

\begin{theorem}\label{thm:irreducible}
Let $(\pp^3,B)$ be a log Calabi--Yau pair 
of index one and coregularity zero.
Assume that $B$ is irreducible and non-normal.
Then, the pair $(\pp^3,B)$ is of cluster type unless
the nodal locus of $B$ is a plane curve in $\pp^3$.
\end{theorem}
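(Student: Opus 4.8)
The plan is to dispatch the three non-planar cases (i), (ii), and (iii) separately, in each case exhibiting a fibration structure and appealing to the relative machinery of Section~\ref{sec:criteria-toric-blow-up}, while the planar case~(iv) and the planar sub-case of~(iii) are precisely the cases we do not claim. The unifying strategy is: after suitable crepant blow-ups extracting only log canonical places of $(\pp^3,B)$, produce a crepant fibration to $\pp^1$ whose fibers are strict transforms of a pencil of quartics (or cubics) passing through the nodal curve, and then invoke Theorem~\ref{introthm:relative-criteria-irreducible} together with Lemma~\ref{lem:cluster-type-under-birational}. Since $B$ is irreducible, the general log fiber $(F,B_F)$ will be an irreducible nodal curve, so the criterion reduces the whole problem to the single numerical question of whether a toric weighted blow-up over a node of $B_F$ yields positive self-intersection of $\widetilde{B}_F$.

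\textbf{Case (i): three concurrent lines.} First I would let $p$ be the common point of the three nodal lines. The non-normality along three concurrent lines forces $\mult_p(B)\geq 3$, so by Proposition~\ref{prop:mult3} the pair $(\pp^3,B)$ is of cluster type immediately. This is the easiest case and requires essentially no fibration analysis.

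\textbf{Case (ii): twisted cubic.} A twisted cubic $\Gamma\subset\pp^3$ is projectively normal and lies on a two-dimensional net of quadrics; moreover $\Gamma$ is itself a rational normal curve, hence toric. The plan is to blow up $\Gamma$ (and resolve the induced nodal singularities of $(\pp^3,B)$ by further blow-ups along the strict transform of the nodal locus, as in the model computations of Proposition~\ref{prop:two-components-cubic-hyperplane}), then use a pencil inside the net of quadrics through $\Gamma$ to build a crepant fibration to $\pp^1$ after running an appropriate MMP over $\pp^1$. The general log fiber is an irreducible nodal curve, and the key point is that the toric/homogeneous nature of the twisted cubic makes the ambient geometry toric over $\pp^1$, so that Theorem~\ref{introthm:relative-criteria-irreducible} applies and the self-intersection computation comes out positive. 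Concretely, I would track the self-intersection $\widetilde{B}_F^2$ through the sequence of blow-downs exactly as in the three cases of Proposition~\ref{prop:two-components-cubic-hyperplane}, but now expecting a \emph{non-planar} outcome that produces at least one non-negative self-intersection curve, which via Theorem~\ref{introthm:relative-criteria-irreducible}(2) guarantees cluster type.

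\textbf{Case (iii): non-planar conic.} A conic in $\pp^3$ is automatically planar, so strictly speaking a ``conic whose nodal locus is not a plane curve'' cannot occur; the statement's exclusion is exactly the planar conic and the line. Hence the content of the theorem is that cases (i) and (ii) are always cluster type, while in cases (iii) and (iv) the nodal locus is a plane curve and is therefore \emph{excluded} from the conclusion. So the proof reduces to handling (i) via Proposition~\ref{prop:mult3} and (ii) via the fibration argument above; in cases (iii) and (iv) there is nothing to prove since these are precisely the excluded planar-nodal-locus cases. I expect the main obstacle to be Case~(ii): verifying that after blowing up the twisted cubic and its successive nodal loci, the resulting birational model genuinely admits a \emph{toric} fibration over $\pp^1$ with the relative Picard rank one (or an MMP reducing to relative Picard rank one) needed to invoke Theorem~\ref{introthm:relative-criteria-irreducible}, and then carrying out the self-intersection bookkeeping to confirm that the strict transform of the general fiber curve acquires positive self-intersection after the correct toric weighted blow-up over its node. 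This is where a careful pencil-and-blow-down analysis, parallel to Proposition~\ref{prop:two-components-cubic-hyperplane} but with the opposite sign of outcome, will be required.
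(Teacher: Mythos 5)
Your reduction of the theorem to cases (i) and (ii) is right: cases (iii) and (iv) have planar nodal locus and are excluded from the conclusion, so nothing needs to be proved there. Your treatment of case (i) is also correct and is in fact shorter than the paper's: since the three concurrent double lines are non-coplanar (otherwise the nodal locus is a plane curve and we are in the excluded case), the degree-two part of the local equation of $B$ at the point of concurrency must lie in $I(\ell_i)^2$ for three lines whose directions span the tangent space, hence vanishes, so $\mult_p(B)\geq 3$ and Proposition~\ref{prop:mult3} applies. The paper instead blows up one of the lines and fibers by the pencil of planes through it, eventually landing on a complexity-one pair; your route via Proposition~\ref{prop:mult3} is a legitimate shortcut.

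The genuine gap is in case (ii), the twisted cubic, which is the heart of the theorem. Your plan is to blow up $\Gamma$ and fiber by a pencil of quadrics through $\Gamma$, then invoke Theorem~\ref{introthm:relative-criteria-irreducible}. But that theorem requires a crepant fibration $\pi\colon (X,B)\rightarrow(\pp^1,\{0\}+\{\infty\})$ with $\pi^{-1}(0)\cup\pi^{-1}(\infty)\subseteq\supp(B)$, i.e.\ two entire fibers contained in the boundary, and it requires the general log fiber boundary $B_F$ to be an \emph{irreducible} nodal curve. Neither hypothesis can be arranged with your pencil: after blowing up $\Gamma$ the boundary consists of the strict transform $B_X$ of the irreducible quartic together with the irreducible exceptional divisor $E$ over $\Gamma$, and both of these are horizontal over the pencil of quadrics (no member of the pencil is a component of $B$, and $E$ dominates $\pp^1$ since $\Gamma$ lies in the base locus), so no fiber lies in the boundary; moreover the general log fiber boundary contains both $E|_F$ and $B_X|_F$ and so is not irreducible. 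The paper's actual argument for this case is structurally different: it uses the secant-line $\pp^1$-bundle $\phi\colon X\rightarrow\pp^2$ (under which $E$ maps to a conic $Q$ and $B_X$ is $2$-to-$1$ over $\pp^2$), computes via the canonical bundle formula that the induced base pair is $(\pp^2,Q+\tfrac12 Q')$ of index two and coregularity zero, performs a case analysis on the degenerate conic $Q'$, and builds a fibration to $\pp^1$ by birationally modifying $\pp^2$ so that $Q$ itself becomes a fiber; the conclusion then comes from Lemma~\ref{lem:complexity-one} or Lemma~\ref{lem:two-horizontal-sections}, not from Theorem~\ref{introthm:relative-criteria-irreducible}. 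Without some mechanism that forces boundary components to become fibers, your sketch for case (ii) does not go through.
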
 

\begin{proof}
We proceed in two cases depending on the geometry of the log canonical centers of $(\pp^3,B)$.\\

\textit{Case 1:} Assume that the nodal locus of $B$ is the union of three concurrent lines in $\pp^3$.\\

Let $\ell_1\cup \ell_2 \cup \ell_3$ be the union of the $1$-dimensional log canonical centers of $(\pp^3,B)$.
Without loss of generality, we assume that the three lines 
$\ell_1,\ell_2$, and $\ell_3$ intersect at $[1:0:0:0]$.
Let $\pi\colon X\rightarrow \pp^3$ be the blow-up of $\pp^3$ at $\ell_1$.
Let $E$ be the exceptional divisor of $\pi$.
Then, we know that $E\simeq \pp^1\times \pp^1$.
The restriction of $\pi_1$ to $E$ is the projection to its first component.
Let $(X,\Delta)$ be the log pull-back of $(\pp^3,B)$ to $X$.
Then, we have $\Delta=B_X+E$, where $B_X$ is the strict transform of $B$ in $X$.
Consider the fibration $\phi\colon X\rightarrow \pp^1$ whose fibers are the strict transforms of planes containing $\ell_1$.
Note that $\ell_2$ and $\ell_3$ are contained in different planes $P_1$ and $P_2$ containing $\ell_1$.
Let $\ell_{2,X}$ and $\ell_{3,X}$ be the strict transforms of $\ell_2$ and $\ell_3$ on $X$.
Then, the intersections $x_1:=E\cap \ell_{2,X}$ and $E\cap \ell_{2,X}$ are contained in different fibers of $\phi$.
In particular, the log Calabi--Yau pair induced on $\pp^1$ by the canonical bundle formula is $(\pp^1,\phi(x_1)+\phi(x_2))$.
Hence, the pair $(X,\Delta)$ has log canonical centers that map to $\phi_1(x_1)$ and $\phi(x_2)$, respectively.
Let $X'\rightarrow X$ be a projective birational morphism that extracts two prime divisor divisors $E_1$ and $E_2$, over $\phi(x_1)$ and $\phi(x_2)$ respectively, which are log canonical places of $(X,\Delta)$.
We may assume that $X'$ is a $\qq$-factorial variety of Picard rank $4$.
Let $(X',\Delta')$ be the log pull-back of $(X,\Delta)$ to $X'$.
Then, we can write 
\[
\Delta'=B_{X'}+E_{X'}+E_1+E_2,
\]
where 
$B_{X'}$ (resp. $E_{X'})$ is the strict transform of $B$ (resp. $E$) in $X'$.
Note that $X'$ is a Fano type variety.
We run a $-(E_1+E_2)$-MMP over $\pp^1$ which terminates after contracting two divisorial canonical places of $(X',\Delta')$.
Let $X''\rightarrow \pp^1$ be the model where this MMP terminates.
Let $E_i''$ be the push-forward of $E_i$ to $X''$.
Let $B_{X''}$ and $E_{X''}$ be the push-forward of $B_{X'}$ and $E_{X'}$ to $X''$, respectively.
Then, the log Calabi--Yau pair
$(X'',B_{X''}+E_{X''}+E_1''+E_2'')$ has index one, 
coregularity zero, and complexity one.
By Lemma~\ref{lem:complexity-one}, we conclude that the aforementioned log Calabi--Yau pair is of cluster type.
Then, by Lemma~\ref{lem:cluster-type-under-birational}, we conclude that $(\pp^3,B)$ is of cluster type in this case.\\

\textit{Case 2:} Assume that the nodal locus of $B$ is a twisted cubic in $\pp^3$.\\ 

Let $\pi\colon X\rightarrow \pp^3$ be the blow-up of the twisted cubic. Let $E$ be the exceptional divisor
and $B_X$ be the strict transform of $B$ in $X$.
Write $\Delta=B_X+E$.
Therefore, the pair $(X,\Delta)$ is the log pull-back of $(\pp^3,B)$ to $X$.
Let $\phi\colon X\rightarrow \pp^2$ be the $\pp^1$-bundle induced by secant curves to the twisted cubic.
Then, the divisor $E$ maps to a smooth conic in $\pp^2$
and $B_X$ admits a $2$-to-$1$ surjective morphism to $\pp^2$.
Let $(\pp^2,\Gamma)$ be the log Calabi--Yau pair induced by the canonical bundle formula.
By~\cite[Theorem 6.1]{FFMP22}, we know that $(\pp^2,\Gamma)$ has index at most two and coregularity zero.
The divisor $\Gamma$ is a boundary divisor as $B_X$ maps $2$-to-$1$ to $\pp^1$.
Further, by the previous considerations, we know that $\Gamma$ contains a reduced conic.
Therefore, we can write $\Gamma=Q+\frac{1}{2}Q'$, where $Q'$ is a possibly degenerate conic. 
There are three cases depending on the geometry of $Q$:
\begin{itemize}
\item $Q'$ is a double line; 
\item $Q'$ is the sum of two lines intersecting transversally at $q\in Q$; or 
\item $Q'$ is a conic tangent to $Q$ at a point $q$.
\end{itemize}
We will prove that $(\pp^3,B)$ is of cluster type in each of the three cases.
The argument provided for the second case 
also applies to the first case.
Therefore, we will proceed in two cases.\\

\textit{Case 2.1:} Assume that $Q'$ is the sum of two lines intersecting transversally at $q\in Q$.\\

We blow-up $\pp^2$ at $q$
and then we inductively blow-up the intersection of the previous exceptional divisor with the strict transform of $Q$ 
until the strict transform of $Q$ has self-intersection zero.
Let $Y\rightarrow \pp^2$ be the induced projective birational morphism
with reduced exceptional divisor $E_Y$.
Let $(Y,Q_Y+\frac{1}{2}Q'_Y+E_Y)$ be the log pull-back of $(\pp^2,Q+\frac{1}{2}Q')$
to $Y$, where $Q_Y$ (resp. $Q'_Y$) is
the strict transform of $Q$ (resp. $Q'$) in $Y$.
Then, we have a fibration $Y\rightarrow \pp^1$ induced by the divisor $Q_Y$ with $Q_Y^2=0$.
Let $Z\rightarrow \pp^1$ be the outcome of the $K_Y$-MMP over $\pp^1$.
Let $Q_Z$ (resp. $Q'_Z$ and $E_Z$) be the push-forward of $Q_Y$ (resp. $Q'_Y$ and $E_Y$) to $Z$.
Then, the log Calabi--Yau pair
$(Z,Q_Z+\frac{1}{2}Q'_Z+E_Z)$ has index two and coregularity zero.
Furthermore, the aforementioned pair induces the log Calabi--Yau pair $(\pp^1,\{0\}+\{\infty\})$ on $\pp^1$ via the canonical bundle formula.
Furthermore, $Q'_{Y'}$ admits a two-to-one morphism to over $\pp^1$.
By~\cite[Lemma 2.11 \& Lemma 2.12]{MM24}, we get a commutative diagram of crepant maps:
\begin{equation}\label{eq:long-diag}
\xymatrix{ 
(\pp^3,B) & (X,B_X+E)\ar[l]\ar[d]_-{\phi} & (X',B_{X'}+E_{X'})\ar@{-->}[l]_-{f}\ar[d]_{\phi'} & (X'',B_{X''}+E_{X''})\ar@{-->}[l]_-{f'}\ar[d]_{\phi''} \ar[rd]^-{g} & \\
& (\pp^2,Q+\frac{1}{2}Q') & (Y,Q_Y+\frac{1}{2}Q'_Y + E_Y)\ar[l]\ar[r] & (Z,Q_Z+\frac{1}{2}Q'_Z+E_Z) \ar[r]^-{h} & \pp^1
}
\end{equation} 
where the following conditions are satisfied:
\begin{enumerate}
\item $f$ and $f'$ are crepant birational maps,
\item the crepant biratonal map $f\circ f'$ only extracts log canonical places of $(\pp^3,B)$, 
\item the morphisms $\phi,\phi$, and $\phi'$ are
crepant fibrations of relative Picard rank one, and 
\item we have ${\phi'}^{-1}(Q_Y+E_Y)\subset \supp(B_{X'}+E_{X'})$
and 
${\phi''}^{-1}(Q_Z+E_Z)\subset \supp(B_{X''}+E_{X''})$.
\end{enumerate}
Let $(F,B_F+E_F)$ be a general log fiber of $g$,
where $B_F$ (resp. $E_F$) is the restriction
of $B_{X''}$ (resp. $E_{X''}$) to $F$.
The Mori fiber space $X''\rightarrow Z$
induces a fibration $F\rightarrow \pp^1$.
By construction, the morphism $\phi''$ is a conic bundle over $Z\setminus E_Z$.
In particular, the fibration 
$F\rightarrow \pp^1$ is a conic bundle
over the complement of the single point
$h^{-1}(p)\cap E_Z$.
As ${\phi''}^{-1}(E_Z)$ is contained in the support of $B_{X''}+E_{X''}$, we conclude that the only possibly reducible fiber of $(F,B_F+E_F)\rightarrow \pp^1$ is contained in the support of $E_F$.
Henceforth, the only singular points of $F$ are nodal points of $E_F+B_F$ and so every 
non-terminal place of $(F,B_F+E_F)$ is a log canonical place of $(F,B_F+E_F)$.
Let $W$ be the outcome of running a relative MMP over $Z$ for the canonical divisor
of a terminalization of $X''$.
Then, we have a Mori fiber space $W\rightarrow Z'$ to a higher birational model of $Z$.
Let $(W,B_W+E_W)$ be the log pull-back of $(\pp^3,B)$ to $W$, then the crepant birational map $(W,B_W+E_W)\dashrightarrow (\pp^3,B)$ only extract log canonical places of $(\pp^3,B)$.
Moreover, we have a Mori fiber space
$\phi_W \colon W\rightarrow Z'$ and a fibration $Z'\rightarrow \pp^1$.
Let $(F_W,B_{F_W}+E_{F_W})$ be a general log fiber of $(W,B_W+E_W)\rightarrow \pp^1$.
Here, as usual, $B_{F_W}$ (resp. $E_{F_W}$) is the restriction of $B_W$ (resp. $E_W$) to $F_W$.
The log Calabi--Yau pair $(F_W,B_{F_W}+E_{F_W})$ is obtained 
from $(F,B_F+E_F)$ by taking a minimal resolution
and then running a relative MMP over $\pp^1$ for the canonical divisor of the resolution.
The previous sentence implies that $F_W$ is smooth
and all the fibers of $F_W\rightarrow \pp^1$ are irreducible except possibly for the fiber supported in $E_{F_W}$ that could have two prime components.
Indeed, $B_{F_W}$ is ample over $\pp^1$ so every component of $E_{F_W}$ must intersect $B_{F_W}$ and so
$E_{F_W}$ has at most two prime components.
We conclude that there are four options for the isomorphism class of $F_W$; namely $\pp^1\times\pp^1$, $\Sigma_1$, or the blow-up at a point of one of the previous surfaces.
In any case, we have that $B_{F_W}^2>0$.
We run a $(-E_W)$-MMP over $\pp^1$.
Let $W\dashrightarrow W'$ be the induced minimal model program.
There are two cases; either this MMP eventually contracts a divisor
or it eventually induces a Mori fiber space to a surface.

Firstly, assume that $W\dashrightarrow W'$ contracts a divisor.
Then, the projective birational map $W\dashrightarrow W'$ must contract a divisor which is horizontal over $\pp^1$.
This divisor does not appear in the support of $B_W+E_W$.
Indeed, the induced fibration $F_W\rightarrow \pp^1$ is
$E_{F_W}$-positive and $E_{F_W}$ has negative self-intersection while $B_{F_W}$ has positive self-intersection.
Let $B_{W'}$ (resp. $E_{W'}$) be the push-forward of $B_W$ (resp. $E_W$) to $W'$.
The divisor $B_{W'}+E_{W'}$ has four prime components; two horizontal over $\pp^1$ and two vertical over $\pp^1$.
Furthermore, we have $\rho(W')=2$.
Then, the log Calabi--Yau pair $(W',B_{W'}+E_{W'})$ has complexity one.
Then, by Lemma~\ref{lem:complexity-one}, we know that 
$(W',B_{W'}+E_{W'})$ is of cluster type.
Thus, we conclude that $(\pp^3,B)$ is of cluster type
by Lemma~\ref{lem:cluster-type-under-birational}.

Secondly, assume that there is a Mori fiber space $W'\rightarrow S$ over $\pp^1$.
Let $B_{W'}$ (resp. $E_{W'}$) be the push-forward of $B_W$ (resp. $E_W$) in $W'$.
Then, the divisor $B_{W'}+E_{W'}$ has two prime components which are horizontal over $S$.
Indeed, the Mori fiber space $W'\rightarrow S$ over $\pp^1$ induces a $E_{F_W}$-positive fibration $F_W\rightarrow \pp^1$
and both $B_{F_W}$ and $E_{F_W}$ are horizontal over $\pp^1$.
Let $(S,B_S)$ be the log Calabi--Yau surface induced on $S$ by the canonical bundle formula for $(W',B_{W'}+E_{W'})$.
Then, the log Calabi--Yau surface $(S,B_S)$ has index one and coregularity zero.
Lemma~\ref{lem:coreg-0-index-1-surface} implies that $(S,B_S)$ is of cluster type.
By Lemma~\ref{lem:two-horizontal-sections}, we conclude that $(W',B_{W'}+E_{W'})$ is of cluster type.
Therefore, by Lemma~\ref{lem:cluster-type-under-birational}, we deduce that $(\pp^3,B)$ is of cluster type.\\

\textit{Case 2.2:} Assume that $Q'$ is a conic that is tangent to $Q$ at $q\in Q$.\\

First, we produce a sequence of birational transformations of $\pp^2$.
Let $Y_0\rightarrow \pp^2$ be the blow-up of $\pp^2$ at $q$. Let $E_0$ be the exceptional divisor.
Then, the pair $(Y_0,Q_{Y_0}+\frac{1}{2}Q'_{Y_0}+\frac{1}{2}E_0)$ is the log pull-back of $(\pp^2,Q+\frac{1}{2}Q')$ to $Y_0$, where $Q_{Y_0}$ (resp. $Q'_{Y_0}$)
is the strict transform of $Q$ (resp. $Q'$) in $Y_0$.
The three curves $Q_{Y_0}, Q'_{Y_0}$, and $E_0$ intersect transversally at a point $q_0$.
Let $Y_1\rightarrow Y_0$ be the blow-up of $Y_0$ at $q_0$.
We denote by $E_1$ the exceptional divisor of $Y_1\rightarrow Y_0$, by $E_0$ the strict transform of $E_0$ in $Y_1$, and by $Q_{Y_1}$ (resp. $Q'_{Y_1}$)
the strict transform of $Q_{Y_0}$ (resp. $Q'_{Y_0}$)
in $Y_1$.
Then, the log Calabi--Yau pair 
\[
\left(
Y_1,
Q_{Y_1}+\frac{1}{2}Q'_{Y_1}+\frac{1}{2}E_0+E_1
\right)
\]
is the log pull-back of $(\pp^2,Q+\frac{1}{2}Q')$ to $Y_1$.
Let $Y_1\rightarrow Y_2$ be the contraction of $E_0$ to an $A_1$-singularity.
Let $Q_{Y_2}$ (resp. $Q'_{Y_2}$ and $E_{Y_2}$)
be the push-forward of $Q_{Y_1}$  (resp. $Q'_{Y_1}$ and $E_1$), respectively.
Note that $Q_{Y_2}^2=2$.
Let $Y\rightarrow Y_2$ be the blow-up of $Q_{Y_2}\cap E_{Y_2}$ followed by the blow-up of the intersection of the strict transform of $Q_{Y_2}$ and the exceptional divisor of the first blow-up.
Let $(Y,Q_Y+\frac{1}{2}Q'_Y+E_Y)$ be the log pull-back of $(\pp^2,Q+\frac{1}{2}'Q)$ to $Y$ 
where $Q_Y$ (resp. $Q'_Y$) is the strict transform
of $Q$ (resp. $Q'$) in $Y$.
Then, we have $Q_Y^2=0$ and so we have an induced
fibration $Y\rightarrow \pp^1$.
We let $Z\rightarrow \pp^1$ be the outcome of the $K_Y$-MMP over $\pp^1$. 
Applying~\cite[Lemma 2.11 \& Lemma 2.12]{MM24}, we get a commutative diagram of crepant maps which is equal to~\eqref{eq:long-diag} with the only difference that this time the map $Y\dashrightarrow \pp^2$ is birational.
Nevertheless, it is still true in this case that the crepant birational map $f\circ f'$ in~\eqref{eq:long-diag} only extracts log canonical places of $(\pp^3,B)$.
Then, the rest of the argument to prove
that $(\pp^3,B)$ is of cluster type is verbatim from 
the last three paragraphs of Case 2.1.
\end{proof}

We turn to prove the main theorem of this article.

\begin{proof}[Proof of Theorem~\ref{introthm:cluster-type-(P^3,B)}]
It $B$ has at least three components, then the pair $(\pp^3,B)$ is of cluster type due to Corollary~\ref{cor:comp3}.
In $B$ has two components, then the statement of the theorem follows from Theorem~\ref{thm:reducible}.
If $B$ is irreducible, then the statement of the theorem follows from Theorem~\ref{thm:irreducible}
\end{proof} 

\section{Examples and questions}
\label{sec:ex-and-quest}

In this section, we provide some examples and some questions for further research.
We start by providing two examples in which Theorem~\ref{introthm:cluster-type-(P^3,B)} and
Proposition~\ref{prop:mult3} do not apply 
to detect if the log Calabi--Yau par
is of cluster type.

\begin{example}\label{ex:nodal-plane-conic}
{\em
In this example, we describe a log Calabi--Yau pair
$(\pp^3,B)$ satisfying the following conditions:
\begin{itemize}
\item $(\pp^3,B)$ has index one, 
\item $(\pp^3,B)$ has coregularity zero,
\item the unique one-dimensional log canonical center of $(\pp^3,B)$ is a plane conic, and 
\item $B$ has multiplicity at most two everywhere.
\end{itemize}
We consider the projective space $\pp^3$ with coordinates $[t:x:y:z]$.
Consider the quartic hypersurface $B$ in $\pp^3$ defined by
\[
\{[t:x:y:z]\in \mathbb{P}^3\mid (ty+x^2-z^2)^2-4x^2(x^2+y^2-z^2)=0.\}
\]
Set $\phi(x,y,z,t)=ty+x^2-z^2$ and 
$\psi(x,y,z,t)=x^2+y^2-z^2$. So, we can rewrite the equation of $B$ as 
\begin{equation}
\label{eq:short}
\phi^2-4x^2\psi=0.
\end{equation}
Thus, this hypersurface is smooth along $x\neq 0$.
So, its singular locus equals $\phi=x=0$
which is a smooth plane conic $C$.
Let $p\in C$ be a point different from 
$[1:0:0:0]$.
If $\psi(p)\neq 0$, then the equation~\eqref{eq:short}
gives us a simple nodal point.
If $\psi(p)=0$, then the equation~\eqref{eq:short}
gives us a pinch point.
In particular, $B$ has multiplicity at most two everywhere except possibly at $[1:0:0:0]$.
At the affine chart $t\neq 0$, we have 
\[
B\cap \mathbb{A}^3 =
\{
(x,y,z)\in \mathbb{A}^3 \mid 
(y+x^2-z^2)^2 - 4x^2(y^2+x^2-z^2)=0
\}. 
\]
Note that $B$ has multiplicity $2$ at the point $(0,0,0)$ of this chart.
Note that the equation defining $B\cap\mathbb{A}^3$ has multiplicity at least $4$ with respect to the weights $(1,2,1)$.
Performing the weighted blow-up 
with weights $(1,2,1)$, we obtain a projective birational morphism $Y\rightarrow \mathbb{A}^3$.
Let $(Y,B_Y+E)$ be the log pull-back of $(\mathbb{A}^3,B)$.
Then, the intersection $B_Y\cap E$
is given by 
\[
\{ [x:y:z]\in \pp(1,2,1) \mid (y+x^2-z^2)^2-4x^2(x^2-z^2)=0
\}.
\]
The previous equation defines a curve $C$ with a single node in $\pp(1,2,1)$. Thus, we conclude that $(Y,B_Y+E)$ has coregularity zero and so $(\pp^3,B)$ is a log Calabi--Yau pair of coregularity zero.
}
\end{example}

\begin{example}\label{ex:nodal-line}
{\em 
In this example, we describe a log Calabi--Yau pair $(\pp^3,B)$ satisfying the following conditions:
\begin{itemize}
\item $(\pp^3,B)$ has index one,
\item $(\pp^3,B)$ has coregularity zero,
\item the unique one-dimensional log canonical center of $(\pp^3,B)$ is a line, and 
\item $B$ has multiplicity at most two everywhere.
\end{itemize}
We consider the projective space $\pp^3$ with coordinates $[t:x:y:z]$.
Consider the quartic hypersurface in $\pp^3$ defined by
\[
B:=\{ [t:x:y:z]\in \pp^3 \mid t^2z^2 +  xyzt + x^2y^2 + x^4 +t^4 =0 \}.
\]
By the Jacobian criterion, we can see that the line $\ell$ is the whole singular locus of $B$.
Further, by looking at the Hessian, we can see that $B$ has simple nodal singularities along $\ell$ outside the points $p:=[0:0:1:0]$ and $q:=[0:0:0:1]$.
Thus, it suffices to study $B$ along these two points.
At the chart $y\neq 0$, the quartic hypersurface becomes
\begin{equation}\label{eq:line}
B\cap \mathbb{A}^3 = \{ (t,x,z)\in \mathbb{A}^3 \mid t^2z^2+xzt+x^2+x^4+t^4=0\}.
\end{equation}
The previous equation has multiplicity two at 
$(0,0,0)$ and is a degenerate cusp singularity so $B$ is an slc surface.
Furthermore, the equation defining the affine chart~\eqref{eq:line} is homogeneous of weight four with respect to the weight $(1,2,1)$.
Therefore, $p$ is a log canonical center of $B$.
By symmetry, the same happens at $q$.
We conclude that the pair obtained by adjunction of $(\pp^3,B)$ to $\ell$ is $(\ell,p+q)$ and has coregularity zero. 
Let $Y\rightarrow \pp^3$ be the blow-up of $\ell$ with exceptional divisor $E$.
Let $B_Y$ be the strict transform of $B$ in $Y$.
Then, $B_Y\cap E$ is reducible over a neighborhood of $p$, so the pair $(Y,B_Y+E)$ has coregularity zero.
This implies that $(\pp^3,B)$ has coregularity zero.
}
\end{example}

\begin{example}\label{ex:irred-normal}
{\em 
In this example, we describe a log Calabi--Yau pair $(\pp^3,B)$ satisfying the following conditions:
\begin{itemize}
\item $(\pp^3,B)$ has index one,
\item $(\pp^3,B)$ has coregularity zero,
\item $B$ is an irreducible normal quartic surface with a single singular point, and
\item the unique zero-dimensional log canonical center of $(\pp^3,B)$ is a point.

\end{itemize}
We consider the projective space $\pp^3$ with coordinates $[t:x:y:z]$.
Consider the quartic hypersurface in $\pp^3$ defined by
\[
B:=\{ [t:x:y:z]\in \pp^3 \mid xyzt + x^4 + y^4 +z^4 =0 \}.
\]
By the Jacobian criterion, we can see that the single point $p:=[1:0:0:0]$ is the singular locus of $B$. Since $B$ is a complete intersection and regular in codimension 1, by \cite[Proposition II.2.23(a)]{Har77} we have that $B$ is normal.

The singularity at $p$ is not rational. Set $C \coloneqq \{xyz=0 \} \subset \pp^2$ and $C' \coloneqq \{x^4 + y^4 + z^4 =0\} \subset \pp^2$. By looking at the classification of normal quartic surfaces with irrational singularities established by Ishii \& Nakayama \cite{IN04}, we see that $B$ fits into Type D and its minimal resolution is a rational surface $M$ of Picard number 13 given by the blowup of $\pp^2$ at the base points of the pencil generated by $C$ and $C'$. Let $\rho$ be such blowup and $E$ be the corresponding exceptional divisor. The birational morphism $\rho \colon (M,E) \rightarrow (\pp
^2,C)$ is a crepant map between Calabi-Yau pairs and since $(\pp^2,C)$ has coregularity zero, then $(M,E)$ so does.

Set $D$ to be the strict transform of $C'$. By \cite[Proposition 1.4 \& Section 2.2]{IN04}, one has that $|D|$ is base point free and the corresponding morphism $\sigma$ is the minimal resolution of $S$. We have the following commutative diagram of crepant maps:
\begin{equation}
\xymatrix{ 
 & (M,E)\ar[ld]_-{\sigma} \ar[rd]^-{\rho} & \\
(S,0) \ar@{-->}[rr] &  & (\pp^2,C). 
}
\end{equation} 

Therefore, $(S,0)$ has coregularity zero and by \cite[Lemma 2.28]{FJJ22}, we deduce that $(\pp^3,B)$ has coregularity zero as well. Note that $p$ has multiplicity three and therefore by Proposition \ref{prop:mult3} the log Calabi-Yau pair $(\pp^3,B)$ is cluster type.
}
\end{example}

The first two examples motivate the first question regarding families of log Calabi--Yau pairs $(\pp^3,B)$ of coregularity zero.

\begin{question}\label{quest:number}
How many families of log Calabi--Yau pairs $(\pp^3,B)$ of coregularity zero with $B$ irreducible and nodal along a plane curve are there?
\end{question} 

We expect that there are at least three families, corresponding to the cases in which the nodal curve is a line, two concurrent lines, and a plane conic.
However, we do not know if each of these cases corresponds to a single family of pairs.
An answer to Question~\ref{quest:number} and a more explicit description of such families, 
would be a first step towards settling the classification
of cluster type log Calabi--Yau pairs of the form $(\pp^3,B)$.

In~\cite[Theorem 1.11]{EFM24}, the authors show that 
whenever $X$ is a Fano surface and $(X,B)$ is a cluster type log Calabi--Yau pair, the open set $X\setminus B$
is divisorially covered by at most two algebraic tori.
In the case of cluster type log Calabi--Yau pairs 
$(\pp^3,B)$, using similar arguments, one can show that 
$\pp^3\setminus B$ can be divisorially covered by at most three copies of $\mathbb{G}_m^3$.
If $\pp^3\setminus B$ consists of a single algebraic tori, then $B$ is the sum of four hyperplanes and the pair is toric (see, e.g.,~\cite[Theorem 1.10]{EFM24}).
Thus, any cluster type log Calabi--Yau pair $(\pp^3,B)$, which is not toric, satisfies that $\pp^3\setminus B$ needs at least two algebraic tori to be divisorially covered.
This motivates the following question.

\begin{question}
Which cluster type log Calabi--Yau pairs $(\pp^3,B)$ cannot be divisorially covered with two algebraic tori? 
\end{question}

It is unclear to us how the number of algebraic tori needed to divisorially cover $\pp^3\setminus B$ reflects on the geometry of $B$.

\bibliographystyle{habbvr}
\bibliography{bib}

\vspace{0.5cm}
\end{document}